\documentclass[11pt,a4paper,oneside,english]{amsart}
\usepackage[T1]{fontenc}
\usepackage[latin9]{inputenc}
\setcounter{tocdepth}{1}
\usepackage{amsthm}
\usepackage{amsbsy}
\usepackage{amstext}
\usepackage{amssymb}
\usepackage[all]{xy}

\makeatletter

\pdfpageheight\paperheight
\pdfpagewidth\paperwidth

\numberwithin{equation}{section}
\numberwithin{figure}{section}
\theoremstyle{plain}
\swapnumbers
\newtheorem{thm}{\sc Theorem}[section]
  \theoremstyle{plain}
  \newtheorem{rem}[thm]{\sc Remark}
  \theoremstyle{plain}
  \newtheorem{defn}[thm]{\sc Definition}
  \theoremstyle{plain}
  \newtheorem{lem}[thm]{\sc Lemma}
  \theoremstyle{plain}
  \newtheorem{cor}[thm]{\sc Corollary}
  \theoremstyle{plain}
  \newtheorem{prop}[thm]{\sc Proposition}

\topmargin0in
\textheight8.8in
\oddsidemargin0.2in
\evensidemargin0.2in
\textwidth6.2in
\advance\hoffset by -0.5 truecm

\usepackage{bbold}
\usepackage{times}

\makeatother

\usepackage{babel}
\begin{document}

\title{On the Rabinowitz Floer homology of twisted cotangent bundles}

\author{Will J. Merry}

\address{Department of Pure Mathematics and Mathematical Statistics, University
of Cambridge, Cambridge CB3 0WB, England\texttt{ }}

\email{\texttt{w.merry@dpmms.cam.ac.uk}}
\begin{abstract}
Let $(M,g)$ be a closed connected orientable Riemannian manifold
of dimension $n\geq2$. Let $\omega:=\omega_{0}+\pi^{*}\sigma$ denote
a twisted symplectic form on $T^{*}M$, where $\sigma\in\Omega^{2}(M)$
is a closed $2$-form and $\omega_{0}$ is the canonical symplectic
structure $dp\wedge dq$ on $T^{*}M$. Suppose that $\sigma$ is weakly
exact and its pullback to the universal cover $\widetilde{M}$ admits
a bounded primitive. Let $H:T^{*}M\rightarrow\mathbb{R}$ be a Hamiltonian
of the form $(q,p)\mapsto\frac{1}{2}\left|p\right|^{2}+U(q)$ for
$U\in C^{\infty}(M,\mathbb{R})$. Let $\Sigma_{k}:=H^{-1}(k)$, and
suppose that $k>c(g,\sigma,U)$, where $c(g,\sigma,U)$ denotes the
Ma\~n\'e critical value. In this paper we compute the Rabinowitz
Floer homology of such hypersurfaces. 

Under the stronger condition that $k>c_{0}(g,\sigma,U)$, where $c_{0}(g,\sigma,U)$
denotes the strict Ma\~n\'e critical value, Abbondandolo and Schwarz
\cite{AbbondandoloSchwarz2009} recently computed the Rabinowitz Floer
homology of such hypersurfaces, by means of a short exact sequence
of chain complexes involving the Rabinowitz Floer chain complex and
the Morse (co)chain complex associated to the free time action functional.
We extend their results to the weaker case $k>c(g,\sigma,U)$, thus
covering cases where $\sigma$ is not exact. 

As a consequence, we deduce that the hypersurface $\Sigma_{k}$ is
never (stably) displaceable for any $k>c(g,\sigma,U)$. This removes
the hypothesis of negative curvature in \cite[Theorem 1.3]{CieliebakFrauenfelderPaternain2010}
and thus answers a conjecture of Cieliebak, Frauenfelder and Paternain
raised in \cite{CieliebakFrauenfelderPaternain2010}. Moreover, following
\cite{AlbersFrauenfelder2010c,AlbersFrauenfelder2008} we prove that
for $k>c(g,\sigma,U)$, any $\psi\in\mbox{Ham}_{c}(T^{*}M,\omega)$
has a leaf-wise intersection point in $\Sigma_{k}$, and that if in
addition $\dim\, H_{*}(\Lambda M;\mathbb{Z}_{2})=\infty$, $\dim\, M\geq2$,
and the metric $g$ is chosen generically, then for a generic $\psi\in\mbox{Ham}_{c}(T^{*}M,\omega)$
there exist infinitely many such leaf-wise intersection points.
\end{abstract}
\maketitle
\tableofcontents{}

\section{Introduction}

Let $(M,g)$ denote a closed connected orientable Riemannian manifold
of dimension $n\geq2$, with cotangent bundle $\pi:T^{*}M\rightarrow M$.
Let $\omega_{0}=d\lambda_{0}$ denote the canonical symplectic form
$dp\wedge dq$ on $T^{*}M$, where $\lambda_{0}$ is the Liouville
$1$-form. Let $\widetilde{M}$ denote the universal cover of $M$.
Let $\sigma\in\Omega^{2}(M)$ denote a closed \textbf{weakly exact}
$2$-form, by this we mean that the pullback $\widetilde{\sigma}\in\Omega^{2}(\widetilde{M})$
is exact. We assume in addition that $\widetilde{\sigma}$ admits
a \textbf{bounded} primitive. This means that there exists $\theta\in\Omega^{1}(\widetilde{M})$
with $d\theta=\widetilde{\sigma}$, and such that \[
\left\Vert \theta\right\Vert _{\infty}:=\sup_{q\in\widetilde{M}}\left|\theta_{q}\right|<\infty,
\]
where $\left|\cdot\right|$ denotes the lift of the metric $g$ to
$\widetilde{M}$. Let \[
\omega:=\omega_{0}+\pi^{*}\sigma
\]
 denote the \textbf{twisted symplectic form}\emph{ }determined by
$\sigma$. We call the symplectic manifold $(T^{*}M,\omega)$ a \textbf{twisted
cotangent bundle}.\newline

Let $H_{g}:T^{*}M\rightarrow\mathbb{R}$ denote the standard {}``kinetic
energy'' Hamiltonian \[
H_{g}(q,p):=\frac{1}{2}\left|p\right|^{2}.
\]
Given a potential $U\in C^{\infty}(M,\mathbb{R})$, we study the autonomous
Hamiltonian system defined by the convex \textbf{mechanical }Hamiltonian
$H:=H_{g}+\pi^{*}U$. Let $X_{H}$ denote the symplectic gradient
of $H$ with respect to the twisted symplectic form $\omega$, and
let $\phi_{t}^{H}:T^{*}M\rightarrow T^{*}M$ denote the flow of $X_{H}$.
The flow $\phi_{t}^{H}$ has a physical interpretation as the flow
of a particle of unit mass and unit charge moving under the effect
of an electric potential and a magnetic field, the former being represented
by $U$ and the latter being represented by $\sigma$ (see for instance
\cite{ArnoldGivental1990,Ginzburg1996}). The \textbf{Lorentz force}\emph{
}$Y:TM\rightarrow TM$ of $\sigma$ is the bundle map determined uniquely
by \begin{equation}
\sigma_{q}(v,w)=\left\langle Y_{q}(v),w\right\rangle \label{eq:lorentz map}
\end{equation}
for $q\in M$ and $v,w\in T_{q}M$.\newline

Given $k\in\mathbb{R}$, we let $\Sigma_{k}:=H^{-1}(k)\subseteq T^{*}M$.
There are two particular {}``critical values'' $c$ and $c_{0}$
of $k$, known as the\emph{ }\textbf{Ma\~n\'e critical values}. They
are such that the dynamics of the hypersurface $\Sigma_{k}$ differ
dramatically depending on the relation of $k$ to these numbers. They
satisfy $c<\infty$ if and only if $\widetilde{\sigma}$ admits a
bounded primitive, and $c_{0}<\infty$ if and only if $\sigma$ is
actually exact. If $\sigma$ is exact then whilst in a lot of cases
one has $c=c_{0}$ (for instance, whenever $\pi_{1}(M)$ is \textbf{amenable}
\cite{FathiMaderna2007}), there may in general be a non-trivial interval
$[c,c_{0}]$. In fact, this latter option happens quite frequently;
see \cite{CieliebakFrauenfelderPaternain2010} for many explicit examples.\newline

Our tool for investigating the hypersurfaces $\Sigma_{k}$ is \textbf{Rabinowitz
Floer homology}, which was introduced by Cieliebak and Frauenfelder
in \cite{CieliebakFrauenfelder2009}, and then extended in various
other directions by several other authors\emph{ }(\cite{AbbondandoloSchwarz2009,AlbersFrauenfelder2010c,CieliebakFrauenfelderPaternain2010,AlbersFrauenfelder2010,CieliebakFrauenfelderOancea2010,AlbersFrauenfelder2010a,AlbersFrauenfelder2008,Kang2010}).
We refer the reader to the survey article \cite{AlbersFrauenfelder2010b}
for a summary of the applications Rabinowitz Floer homology has generated
so far. The present paper should be thought of as a supplement to
\cite{AbbondandoloSchwarz2009}. Indeed, phrased in the language above,
Theorem 2 of \cite{AbbondandoloSchwarz2009} deals with energy levels
$k>c_{0}$ (in which case $\sigma$ is then necessarily exact). In
this paper we study the weaker condition $k>c$. More precisely, we
compute the Rabinowitz Floer homology (as defined in \cite{CieliebakFrauenfelderPaternain2010})
for any energy level $\Sigma_{k}$ with $k>c$. These computations
are then used to answer a conjecture of Cieliebak, Frauenfelder and
Paternain \cite{CieliebakFrauenfelderPaternain2010}; namely that
for $k>c$ the hypersurface $\Sigma_{k}$ is never displaceable.\newline

The starting point of Rabinowitz Floer homology is to work with a
different action functional than the one normally used in Floer homology.
This functional was originally introduced by Rabinowitz \cite{Rabinowitz1978},
and has the advantage that its critical points detect periodic orbits
lying in a \textbf{fixed energy level} of the Hamiltonian. Let $\Lambda T^{*}M$
denote the free loop space of maps $x:S^{1}\rightarrow T^{*}M$ of
Sobolev class $W^{1,2}$. Note that elements of $\Lambda T^{*}M$
are continuous. Given a free homotopy class $\alpha\in[S^{1},M]$,
let $\Lambda_{\alpha}T^{*}M$ denote the component of $\Lambda T^{*}M$
of loops whose projection to $M$ belong to $\alpha$. Fix a potential
$U\in C^{\infty}(M,\mathbb{R})$ and put $H=H_{g}+\pi^{*}U$. Fix
a regular energy value $k\in\mathbb{R}$ of $H$, and set $\Sigma_{k}:=H^{-1}(k)$.
In order to introduce the Rabinowitz action functional, we begin by
considering the $1$-form $a_{H-k}\in\Omega^{1}(\Lambda T^{*}M\times\mathbb{R})$
defined for $(x,\eta)\in\Lambda T^{*}M\times\mathbb{R}$ and $(\xi,b)\in T_{(x,\eta)}(\Lambda T^{*}M\times\mathbb{R})$
by \[
(a_{H-k})_{(x,\eta)}(\xi,b):=\int_{S^{1}}\omega(\xi,\dot{x}-\eta X_{H}(x))dt-b\int_{S^{1}}(H(x(t))-k)dt.
\]
The assumption that $\sigma$ is weakly exact implies the symplectic
form $\omega$ is \textbf{symplectically aspherical}, that is, given
any smooth function $f:S^{2}\rightarrow T^{*}M$ it holds that \[
\int_{S^{2}}f^{*}\omega=0.
\]
This implies that $a_{H-k}$ is exact on $\Lambda_{0}T^{*}M\times\mathbb{R}$,
where $\Lambda_{0}T^{*}M\subseteq\Lambda T^{*}M$ denotes the component
of $\Lambda T^{*}M$ of loops whose projection to $M$ is contractible.
That is, there exists a function $A_{H-k}:\Lambda_{0}T^{*}M\times\mathbb{R}\rightarrow\mathbb{R}$
called the \textbf{Rabinowitz action functional}\emph{ }with the property
that \[
a_{H-k}|_{\Lambda_{0}T^{*}M\times\mathbb{R}}=dA_{H-k}.
\]
The functional $A_{H-k}$ is defined by \[
A_{H-k}(x,\eta):=\int_{D^{2}}\bar{x}^{*}\omega-\eta\int_{S^{1}}(H(x(t))-k)dt,
\]
where $\bar{x}\in C^{0}(D^{2},T^{*}M)\cap W^{1,2}(D^{2},T^{*}M)$
is any map such that $\bar{x}|_{\partial D^{2}}=x$. The symplectic
asphericity condition implies that the value of $\int_{D^{2}}\bar{x}^{*}\omega$
is independent of the choice of filling disc $\bar{x}$. Our first
observation is that the additional assumption that the lift $\widetilde{\sigma}$
of $\sigma$ to $\widetilde{M}$ admits a \textbf{bounded}\emph{ }primitive
implies that the symplectic form $\omega$ is \textbf{symplectically
atoroidal}, that is, given any smooth function $f:\mathbb{T}^{2}\rightarrow T^{*}M$
it holds that \[
\int_{\mathbb{T}^{2}}f^{*}\omega=0
\]
(see Lemma \ref{lem:key observation}). In this case $a_{H-k}$ is
actually exact on all of $\Lambda T^{*}M\times\mathbb{R}$. Indeed,
for each $\alpha\in[S^{1},M]$, fix a reference loop $x_{\alpha}\in\Lambda_{\alpha}T^{*}M$.
Let $C:=S^{1}\times[0,1]$. Let $\bar{x}\in C^{0}(C,T^{*}M)\cap W^{1,2}(C,T^{*}M)$
denote any map such that $\bar{x}(\cdot,0)=x$ and $\bar{x}(\cdot,1)=x_{\alpha}$.
Since $\omega$ is symplectically atoroidal, the value of $\int_{C}\bar{x}^{*}\omega$
is independent of the choice of $\bar{x}$. Thus we may define $A_{H-k}:\Lambda T^{*}M\times\mathbb{R}\rightarrow\mathbb{R}$
by \[
A_{H-k}(x,\eta):=\int_{C}\bar{x}^{*}\omega-\eta\int_{S^{1}}(H(x(t))-k)dt,
\]
so that \[
a_{H-k}=dA_{H-k}.
\]
The critical points of $A_{H-k}$ are easily seen to satisfy:\[
\dot{x}=\eta X_{H}(x(t))\ \ \ \mbox{for all }t\in S^{1};
\]
\[
\int_{S^{1}}(H(x(t))-k)dt=0.
\]
Since $H$ is invariant under its Hamiltonian flow, the second equation
implies \[
H(x(t))-k=0\ \ \ \mbox{for all }t\in S^{1},
\]
that is, \[
x(S^{1})\subseteq\Sigma_{k}.
\]
Thus if $\mbox{Crit}(A_{H-k})$ denotes the set of critical points
of $A_{H-k}$, we can characterize $\mbox{Crit}(A_{H-k})$ by\begin{eqnarray*}
\mbox{Crit}(A_{H-k}) & = & \left\{ (x,\eta)\in\Lambda T^{*}M\times\mathbb{R}\,:\, x\in C^{\infty}(S^{1},T^{*}M)\right.\\
 &  & \left.\ \ \ \dot{x}(t)=\eta X_{H}^{\sigma}(x(t)),\ x(S^{1})\subseteq\Sigma_{k}\right\} .
\end{eqnarray*}
For a generic choice of the metric $g$, the set $\mbox{Crit}(A_{H-k})$
consists of a copy of the hypersurface $\Sigma_{k}$ (corresponding
to the constant loops with $\eta=0$) and a discrete union of circles.\newline 

On the Lagrangian side we can play a similar game. Let $L_{g}:TM\rightarrow\mathbb{R}$
denote the standard {}``kinetic energy'' Lagrangian defined by $L_{g}(q,v):=\frac{1}{2}\left|v\right|^{2}$,
and given $U\in C^{\infty}(M,\mathbb{R})$ consider the Lagrangian
$L:=L_{g}-\pi^{*}U$ (here we denote also by $\pi$ the footpoint
map $TM\rightarrow M$). The Lagrangian $L$ is the \textbf{Fenchel
transform }of the Hamiltonian $H=H_{g}+\pi^{*}U$ from above. Let
$q_{\alpha}:=\pi\circ x_{\alpha}$, so that $q_{\alpha}$ is an element
of the component $\Lambda_{\alpha}M$ corresponding to $\alpha$ of
the free loop space $\Lambda M$. Given any $q\in\Lambda_{\alpha}M$,
let $\bar{q}\in C^{0}(C,M)\cap W^{1,2}(C,M)$ denote any map such
that $\bar{q}(\cdot,0)=q$ and $\bar{q}(\cdot,1)=q_{\alpha}$ (where
$C=S^{1}\times[0,1]$ is as above). Then we define the \textbf{free
time action functional}\emph{ }$S_{L+k}:\Lambda M\times\mathbb{R}^{+}\rightarrow\mathbb{R}$
by \[
S_{L+k}(q,T):=T\int_{S^{1}}\left(L\left(q(t),\frac{\dot{q}(t)}{T}\right)+k\right)dt+\int_{C}\bar{q}^{*}\sigma.
\]
If $\sigma$ is exact, this reduces to the definition of the standard
free time action functional studied in \cite{ContrerasIturriagaPaternainPaternain2000,Contreras2006}
(up to a constant).

If $\mbox{Crit}(S_{L+k})$ denotes the set of critical points of $S_{L+k}$,
then if $g$ is chosen genericaly the set $\mbox{Crit}(S_{L+k})$
consists of a discrete union of circles. If $L=L_{g}-\pi^{*}U$ and
$H=H_{g}+\pi^{*}U$ then there is a close relationship between critical
points of $S_{L+k}$ and critical points of $A_{H-k}$. Namely, each
critical point $w=(q,T)\in\mbox{Crit}(S_{L+k})$ determines two critical
points $Z^{\pm}(w)=(x^{\pm},\pm T)$ of $A_{H-k}$. Here $x^{+}(t):=(q(t),\dot{q}(t))$
(where we have identified $TM$ with $T^{*}M$ via the Riemannian
metric to see $\dot{q}(t)$ as an element of $T_{q(t)}^{*}M$) and
$x^{-}(t):=x^{+}(-t)$. Then we have \[
\{Z^{\pm}(w)\,:\, w\in\mbox{Crit}(S_{L+k})\}=\left\{ (x,\eta)\in\mbox{Crit}(A_{H-k})\,:\,\eta\ne0\right\} .
\]
The {}``extra'' critical points $(x,0)$ of $A_{H-k}$ correspond
to the so-called \textbf{critical points at infinity} of $S_{L+k}$,
in the sense of Bahri \cite{Bahri1989}. Following \cite{AbbondandoloSchwarz2009},
this motivates us to extend $\mbox{Crit}(S_{L+k})$ to a new set \[
\overline{\mbox{Crit}}(S_{L+k}):=\mbox{Crit}(S_{L+k})\cup\{(q,0)\,:q\in M\}.
\]
For $k>c$, it turns out that one can do Morse theory with $S_{L+k}$.
More precisely, after picking a Morse function $f:\overline{\mbox{Crit}}(S_{L+k})\rightarrow\mathbb{R}$,
one can combine Frauenfelder's \textbf{Morse-Bott homology with cascades}\emph{
}\cite[Appendix A]{Frauenfelder2004} with Abbondandolo and Majer's
infinite dimensional Morse theory \cite{AbbondandoloMajer2006} to
construct a chain complex $CM_{*}(S_{L+k},f)$ and a cochain complex
$CM^{*}(S_{L+k},f)$ whose associated \textbf{Morse (co)homology}\emph{
}$HM_{*}(S_{L+k},f)$ and $HM^{*}(S_{L+k},f)$ coincide with the singular
(co)homology of $\Lambda M\times\mathbb{R}^{+}$.\newline 

The fact that there is such a strong relation between the critical
points of $S_{L+k}$ and $A_{H-k}$ means that one is tempted to try
and relate the Morse (co)homology of $S_{L+k}$ with the Rabinowitz
Floer homology of $A_{H-k}$. This is precisely what Abbondandolo
and Schwarz did, and in \cite[Theorem 2]{AbbondandoloSchwarz2009}
they construct (for $k>c_{0}$) a short exact sequence of chain complexes\begin{equation}
0\rightarrow CM_{*}(S_{L+k},f)\rightarrow RF_{*}(A_{H-k},h)\rightarrow CM^{1-*}(S_{L+k},-f)\rightarrow0.\label{eq:ses}
\end{equation}
Here $h:\mbox{Crit}(A_{H-k})\rightarrow\mathbb{R}$ denotes a Morse
function on $\mbox{Crit}(A_{H-k})$ and $RF_{*}(A_{H-k},h)$ denotes
the Rabinowitz Floer chain complex of the pair $(A_{H-k},h)$. We
remark here that the Morse functions $f$ and $h$ must be related
to each other in a fairly special way in order for such a short exact
sequence to hold. Anyway, passing to the long exact sequence associated
to this short exact of chain complexes and making the identification
of the Morse (co)homology with the singular (co)homology of the loop
space, this provides a way of computing the Rabinowitz Floer homology
$RFH_{*}(A_{H-k})$. Actually it must be said that this long exact
sequence is a special case of a more general construction of Cieliebak,
Frauenfelder and Oancea \cite{CieliebakFrauenfelderOancea2010}, which
links Rabinowitz Floer homology with symplectic homology.\newline 

The aim of this paper is to show how the sequence \eqref{eq:ses}
can be extended to the weaker case of $k>c$. In order to keep our
exposition from being unnecessarily long, we only provide full details
where there are substantial differences from \cite{AbbondandoloSchwarz2009}.
Let us now summarize exactly what we do differently. On the Lagrangian
side, more work must be done in order to define the Morse (co)complex;
the key problem is to show that the Palais-Smale condition holds,
which was shown in our previous work \cite{Merry2010}. On the Hamiltonian
side, we work directly with the Hamiltonians $H_{g}+\pi^{*}U$ that
define the energy level $\Sigma_{k}$. This means that we cannot use
the $L^{\infty}$ estimates on gradient flow lines of $A_{H-k}$ previously
obtained in \cite{CieliebakFrauenfelder2009,AbbondandoloSchwarz2009,CieliebakFrauenfelderPaternain2010,CieliebakFrauenfelderOancea2010}.
Instead, we adapt the method of Abbondandolo and Schwarz in \cite{AbbondandoloSchwarz2006}
to obtain our $L^{\infty}$ bounds. In fact, we are only able to obtain
these $L^{\infty}$ bounds if we make an \textbf{additional }assumption
on $\sigma$, namely that $\left\Vert \sigma\right\Vert _{\infty}$
is sufficiently small (cf. Remark \ref{rem:shrinking sigma}; specifically
\eqref{eq:epsilon 2}) . However, a scaling argument, combined with
invariance of the Rabinowitz Floer homology defined in \cite{CieliebakFrauenfelderPaternain2010}
(see below) implies this is in fact \textbf{no} extra restriction
at all.\newline 

A further difference is the question of grading; since we are working
with the twisted symplectic form $\omega$, results such as Duistermaat's
\textbf{Morse index theorem} \cite{Duistermaat1976} are not immediately
available to us. Secondly the Hamiltonian $H$ is no longer a \textbf{defining
Hamiltonian }(in the sense of \cite{CieliebakFrauenfelder2009}).
This makes the computation of the Fredholm index of the operator obtained
by linearizing the gradient of the Rabinowitz action functional along
a flow line somewhat more complicated. Moreover unlike the corresponding
situation in \cite{AbbondandoloSchwarz2009}, the relationship between
the Morse index of the fixed period action functional and the free
time action functional is not so clear (cf. Theorem \ref{thm:lag index}
and Remark \ref{rem:Index jumping on the morse side}). Full details
of these index computations can be found in a supplementary paper
joint with Gabriel P. Paternain \cite{MerryPaternain2010}.\newline

Anyway, having proved such a short exact sequence \eqref{eq:ses},
it is then clear that the Rabinowitz Floer homology $RFH_{*}(A_{H-k})$
is non-zero whenever $k>c$. A key property of the Rabinowitz Floer
homology $RFH_{*}(\Sigma,V)$ constructed in \cite{CieliebakFrauenfelderPaternain2010},
which is associated to a hypersurface $\Sigma$ of \textbf{virtual
restricted contact type} in a \textbf{geometrically bounded symplectically
aspherical} symplectic manifold $V$, is that if the hypersurface
is displaceable then $RFH_{*}(\Sigma,V)$ vanishes. Assuming that
our Rabinowitz Floer homology $RFH_{*}(A_{H-k})$ is the same as the
Rabinowitz Floer homology%
\footnote{The hypersurface $\Sigma_{k}$ is virtually contact if $k>c$ \cite[Lemma 5.1]{CieliebakFrauenfelderPaternain2010},
so $RFH_{*}(\Sigma_{k},T^{*}M)$ as defined in \cite{CieliebakFrauenfelderPaternain2010}
is well defined.%
} $RFH_{*}(\Sigma_{k},T^{*}M)$ from \cite{CieliebakFrauenfelderPaternain2010},
this would imply that $\Sigma_{k}$ is never displaceable for $k>c$.
In Section \ref{sec:Non-displaceability-above-the} we prove that
the two Rabinowitz Floer homologies are indeed isomorphic, and thus
we arrive at the main result of this paper.
\begin{thm}
\label{thm:my main theorem}Let $(M,g)$ be a closed connected orientable
Riemannian manifold and $\sigma\in\Omega^{2}(M)$ be a closed weakly
exact $2$-form. Let $U\in C^{\infty}(M,\mathbb{R})$ and put $H:=H_{g}+\pi^{*}U$
and $\Sigma_{k}:=H^{-1}(k)$. Then if $k>c(g,\sigma,U)$ the Rabinowitz
Floer homology $RFH_{*}(\Sigma_{k},T^{*}M)$ of \cite{CieliebakFrauenfelderPaternain2010}
is defined and non-zero. In particular, $\Sigma_{k}$ is not displaceable.\end{thm}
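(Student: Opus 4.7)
The plan is to deduce Theorem \ref{thm:my main theorem} from three ingredients: (i) extension of the Abbondandolo--Schwarz short exact sequence \eqref{eq:ses} to the regime $k>c$, (ii) identification of the Morse (co)homology on the Lagrangian side with the singular (co)homology of $\Lambda M\times\mathbb{R}^{+}$, and (iii) identification of the Rabinowitz Floer homology $RFH_{*}(A_{H-k})$ constructed here with the invariant $RFH_{*}(\Sigma_{k},T^{*}M)$ of \cite{CieliebakFrauenfelderPaternain2010}, combined with the general vanishing theorem for displaceable hypersurfaces proved there. Well-definedness of $RFH_{*}(\Sigma_{k},T^{*}M)$ itself is already recorded in the footnote: for $k>c$ the hypersurface $\Sigma_{k}$ is of virtual restricted contact type by \cite[Lemma 5.1]{CieliebakFrauenfelderPaternain2010}.

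The bulk of the work lies in producing \eqref{eq:ses} for the weaker hypothesis $k>c$. On the Lagrangian side, the key analytic input is the Palais--Smale condition for $S_{L+k}$, which is precisely what was established in the author's earlier paper \cite{Merry2010}; combining this with Frauenfelder's Morse--Bott-with-cascades machinery \cite[Appendix A]{Frauenfelder2004} and the infinite-dimensional Morse theory of \cite{AbbondandoloMajer2006} yields chain complexes $CM_{*}(S_{L+k},f)$ and $CM^{*}(S_{L+k},f)$ whose (co)homology is that of $\Lambda M\times\mathbb{R}^{+}$. On the Floer side, because $H=H_{g}+\pi^{*}U$ is no longer a defining Hamiltonian, the $L^{\infty}$ estimates of \cite{CieliebakFrauenfelder2009,AbbondandoloSchwarz2009,CieliebakFrauenfelderPaternain2010} do not apply directly; instead I plan to adapt the argument of \cite{AbbondandoloSchwarz2006}. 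This will initially force an auxiliary smallness assumption on $\|\sigma\|_{\infty}$, which I then remove by a scaling argument combined with the invariance of $RFH_{*}(\Sigma_{k},T^{*}M)$ under conformal rescalings of the symplectic form (cf.\ Remark \ref{rem:shrinking sigma}). The chain maps realising \eqref{eq:ses} can then be modelled faithfully on those of \cite{AbbondandoloSchwarz2009}; the genuinely new ingredient needed to make sense of the grading in the twisted setting is the index computation of \cite{MerryPaternain2010}.

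Passing to the long exact sequence associated to \eqref{eq:ses} and using the identification of Morse with singular (co)homology of $\Lambda M\times\mathbb{R}^{+}$ gives
\[
\cdots\to H_{n}(\Lambda M)\to RFH_{n}(A_{H-k})\to H^{1-n}(\Lambda M)\to H_{n-1}(\Lambda M)\to\cdots,
\]
from which non-triviality of $RFH_{*}(A_{H-k})$ is immediate (for instance from the class of a point). The final step, carried out in Section \ref{sec:Non-displaceability-above-the}, is to verify that $RFH_{*}(A_{H-k})\cong RFH_{*}(\Sigma_{k},T^{*}M)$; this reduces to checking that the two presentations of the Rabinowitz action functional produce canonically chain-isomorphic complexes, the main subtlety once again being $L^{\infty}$ control of the moduli spaces. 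I expect this last identification to be the principal technical obstacle, since it is precisely where the \emph{abstract} Rabinowitz Floer homology of \cite{CieliebakFrauenfelderPaternain2010}, in which the defining data of $\Sigma_{k}$ are chosen freely, must be reconciled with the \emph{concrete} chain complex built from the mechanical Hamiltonian $H$. Once granted, non-vanishing of $RFH_{*}(\Sigma_{k},T^{*}M)$ and hence non-displaceability of $\Sigma_{k}$ follow at once from the vanishing theorem of \cite{CieliebakFrauenfelderPaternain2010}.
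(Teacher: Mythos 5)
Your proposal correctly reproduces the paper's proof strategy: extend the Abbondandolo--Schwarz short exact sequence to $k>c$ (with the $L^{\infty}$ bounds adapted from \cite{AbbondandoloSchwarz2006}, first under a smallness hypothesis on $\left\Vert\sigma\right\Vert_{\infty}$, then removed by the rescaling $\varphi_{r}(q,p)=(q,rp)$ together with the invariance of $RFH_{*}(\Sigma_{k},T^{*}M)$ from \cite{CieliebakFrauenfelderPaternain2010}), identify $RFH_{*}(A_{H-k})$ with $RFH_{*}(\Sigma_{k},T^{*}M)$ via the truncated-homology comparison of Proposition \ref{quadr is const}, and conclude non-vanishing from the long exact sequence and non-displaceability from the vanishing theorem of \cite{CieliebakFrauenfelderPaternain2010}. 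This is essentially the same route taken in the paper, and you have correctly identified where the real work lies (the $L^{\infty}$ estimates and the reconciliation of the two Rabinowitz Floer homologies).
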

\begin{rem}
An alternative proof of Theorem \ref{thm:my main theorem} is given
by Bae and Frauenfelder in \cite{BaeFrauenfelder2010}. Their idea
is to show directly that the Rabinowitz Floer homology $RFH_{*}(\Sigma_{k},T^{*}M;\omega)$
as defined in \cite{CieliebakFrauenfelderPaternain2010} (where we
temporarily add {}``$\omega$'' to the notation to indicate which
symplectic form we are working with) is independent under certain
perturbations of $\omega$. Using this, they prove that $RFH_{*}(\Sigma_{k},T^{*}M;\omega)\cong RFH_{*}(\Sigma_{k},T^{*}M;\omega_{0})$,
from which they can deduce Theorem \ref{thm:my main theorem} from
the corresponding results in \cite{CieliebakFrauenfelderOancea2010,AbbondandoloSchwarz2009}.
See also Remark \ref{rem:inv under g} below.
\end{rem}

\begin{rem}
\label{rem:stably displaceable remark}In fact, Theorem \ref{thm:my main theorem}
proves that for $k>c$ the hypersurface $\Sigma_{k}$ is never \textbf{\emph{stably
displaceable}}. The concept of being stably displaceable is useful
when the Euler characteristic $\chi(M)$ is non-zero. Indeed, when
$\chi(M)\ne0$, $\Sigma_{k}$ is never displaceable for topological
reasons. However, it may be stably displaceable. To define stably
displaceability, one considers the symplectic manifold $(T^{*}M\times T^{*}S^{1},\omega\oplus\omega_{S^{1}})$,
where $\omega_{S^{1}}$ is the standard symplectic form on $T^{*}S^{1}$
(note that $\chi(M\times S^{1})=0$). If $H=H_{g}+\pi^{*}U$ is a
mechanical Hamiltonian on $T^{*}M$, consider the new Hamiltonian
$\widehat{H}:T^{*}(M\times S^{1})\rightarrow\mathbb{R}$ defined by
\begin{align*}
\widehat{H}(q,p,t,p_{t}): & =H(q,p)+\frac{1}{2}\left|p_{t}\right|^{2}\ \ \ \ \ \ \ \ \ \ \ \ p\in T_{q}^{*}M,\ p_{t}\in T_{t}^{*}S^{1}\\
 & =\frac{1}{2}\left|p\right|^{2}+U(q)+\frac{1}{2}\left|p_{t}\right|^{2}.
\end{align*}
Let $\widehat{\Sigma}_{k}:=\widehat{H}^{-1}(k)$. Then by definition
$\Sigma_{k}$ is stably displaceable if $\widehat{\Sigma}_{k}$ is
displaceable. In order to see why our theorem implies that $\Sigma_{k}$
is never stably displaceable for $k>c$, one uses the following observation
of Macarini and Paternain \cite[Lemma 2.2]{MacariniPaternain2010}:
if $c$ denotes the Ma\~n\'e critical value of $H$ and $\widehat{c}$
denotes the Ma\~n\'e critical value of $\widehat{H}$ then%
\footnote{Actually \cite[Lemma 2.2]{MacariniPaternain2010} works with the strict
Ma\~n\'e critical values $c_{0}$ and $\widehat{c}_{0}$, but exactly
the same proof (working on $\tilde{M}$ instead of $M$) shows that
$c=\widehat{c}$.%
} $\widehat{c}=c$. Thus if $k>c$ then also $k>\widehat{c}$, and
so applying Theorem \ref{thm:my main theorem} to $\widehat{\Sigma}_{k}$
we see that $\widehat{\Sigma}_{k}$ is not displaceable, and hence
$\Sigma_{k}$ is not stably displaceable.
\end{rem}

\begin{rem}
\label{rem:remark on cfp rfh}Strictly speaking, the Rabinowitz Floer
homology $RFH_{*}(\Sigma_{k},T^{*}M)$ as defined in \cite{CieliebakFrauenfelderPaternain2010}
is only defined for contractible loops, as the observation that the
twisted symplectic form $\omega$ is symplectically atoroidal was
not used in that paper. However, if one uses this observation, the
construction in \cite{CieliebakFrauenfelderPaternain2010} allows
one to define $RFH_{*}(\Sigma_{k},T^{*}M)$ for any free homotopy
class of loops. The proof given in Section \ref{sec:Non-displaceability-above-the}
shows that our $RFH_{*}(A_{H-k})$ agrees with this Rabinowitz Floer
homology $RFH_{*}(\Sigma_{k},T^{*}M)$ (in any free homotopy class).
The reader however may prefer to read Section \ref{sec:Non-displaceability-above-the}
as if we were only working with contractible loops (which is sufficient
for the non-displaceability application we have in mind). 
\end{rem}

\begin{rem}
In \cite{Merry2010b} we compute the \textbf{\emph{Lagrangian Rabinowitz
Floer homology }}of the hypersurface $\Sigma_{k}$, where for the
Lagrangian submanifolds of $T^{*}M$ involved we take two cotangent
fibres $T_{q_{0}}^{*}M$ and $T_{q_{1}}^{*}M$ (where possibly $q_{0}=q_{1}$).
We show that a similar short exact sequence to \eqref{eq:ses} exists
between the Lagrangian Rabinowitz Floer homology \[
RFH_{*}(\Sigma_{k},T_{q_{0}}^{*}M,T_{q_{1}}^{*}M,T^{*}M)
\]
 and the Morse (co)homology of the free time action functional, this
time defined on the \textbf{\emph{path space }}$\Omega(M,q_{0},q_{1})$
of paths in $M$ from $q_{0}$ to $q_{1}$. 
\end{rem}
Having proved that for $k>c$ the Rabinowitz Floer homology $RFH_{*}(\Sigma_{k},T^{*}M)$
is non-zero, one can prove a much stronger statement than non-displaceability,
which we will now explain. Let $\mbox{Ham}_{c}(T^{*}M,\omega)$ denote
the set of compactly supported Hamiltonian diffeomorphisms of the
symplectic manifold $(T^{*}M,\omega)$, that is \[
\mbox{Ham}_{c}(T^{*}M,\omega):=\left\{ \phi_{1}^{F}\,:\, F\in C_{c}^{\infty}(S^{1}\times T^{*}M,\mathbb{R})\right\} ,
\]
where $\phi_{t}^{F}$ is the flow of $X_{F}$; the latter being the
time-dependent symplectic gradient of $F$ with respect to $\omega$. 

Fix $H=H_{g}+\pi^{*}U$ and put $\Sigma_{k}:=H^{-1}(k)$. Given $x\in\Sigma_{k}$,
let us write $\mathcal{L}_{x}$ for the leaf of the characteristic
foliation of $\Sigma_{k}$ passing through $x$, that is,\[
\mathcal{L}_{x}:=\{\phi_{t}^{H}(x)\,:\, t\in\mathbb{R}\},
\]
so that $\Sigma_{k}$ is foliated by the leaves $\{\mathcal{L}_{x}\,:\, x\in\Sigma_{k}\}$.
Given $\psi\in\mbox{Ham}_{c}(T^{*}M,\omega)$, a point $x\in\Sigma_{k}$
is called a \textbf{leaf-wise intersection point for $\psi$}\emph{
}if $\psi(x)\in\mathcal{L}_{x}$. By following through the proofs
in \cite{AlbersFrauenfelder2010c,AlbersFrauenfelder2008} we can prove
the following result. 
\begin{thm}
\label{thm:Leafwise}Let $(M,g)$ be a closed connected orientable
Riemannian manifold and $\sigma\in\Omega^{2}(M)$ be a closed weakly
exact $2$-form. Let $U\in C^{\infty}(M,\mathbb{R})$ and put $H:=H_{g}+\pi^{*}U$.
Choose $k>c(g,\sigma,U)$ and put $\Sigma_{k}:=H^{-1}(k)$. Then for
any $\psi\in\mbox{\emph{Ham}}_{c}(T^{*}M,\omega)$ there exists a
leaf-wise intersection point for $\psi$ in $\Sigma_{k}$. Moreover,
if $\dim\, H_{*}(\Lambda M;\mathbb{Z}_{2})=\infty$ and $g$ is chosen
generically, then for a generic $\psi\in\mbox{\emph{Ham}}_{c}(T^{*}M,\omega)$
there exist infinitely many leaf-wise intersection points for $\psi$
in $\Sigma_{k}$.
\end{thm}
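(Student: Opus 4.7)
The strategy is to adapt the leaf-wise intersection machinery of Albers--Frauenfelder \cite{AlbersFrauenfelder2010c,AlbersFrauenfelder2008} to our setting, where the non-vanishing of $RFH_*(A_{H-k})$ from Theorem \ref{thm:my main theorem} plays the role of the non-vanishing input. Given $\psi=\phi_1^F$ with $F\in C_c^\infty(S^1\times T^*M,\mathbb{R})$, the plan is to introduce a perturbed Rabinowitz action functional
\[
A_{H-k}^F(x,\eta):=A_{H-k}(x,\eta)-\int_{S^1}F(t,x(t))\,dt,
\]
(or the standard cut-off variant supported in $[1/2,1]$ as in \cite{AlbersFrauenfelder2008}), and to check directly from the critical point equation that critical points $(x,\eta)$ of $A_{H-k}^F$ with $x$ non-constant produce leaf-wise intersection points for $\psi$ in $\Sigma_k$, while critical points with $\eta=0$ are constants in $\Sigma_k$ fixed by $\psi$. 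Thus the existence statement follows once I construct a well-defined Floer homology for $A_{H-k}^F$ and show it is isomorphic to $RFH_*(A_{H-k})$, since the latter is non-zero by Theorem \ref{thm:my main theorem}.

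The main technical step, and the principal obstacle, is to carry over the $L^\infty$ bounds on gradient flow lines from the unperturbed to the perturbed functional. Since we cannot appeal to the estimates of \cite{CieliebakFrauenfelder2009,AbbondandoloSchwarz2009,CieliebakFrauenfelderPaternain2010} in the form stated there (because we work with the mechanical Hamiltonian $H_g+\pi^*U$ rather than a defining Hamiltonian), I would re-run the Abbondandolo--Schwarz argument of \cite{AbbondandoloSchwarz2006} used for our unperturbed $L^\infty$ bounds, this time with the additional perturbation terms coming from $F$. Because $F$ is compactly supported, the perturbation terms and their derivatives are uniformly bounded in $x$, so they can be absorbed into the inhomogeneous terms of the Cauchy--Riemann type equation; the smallness assumption on $\|\sigma\|_\infty$ needed earlier is unaffected, and the same scaling/invariance argument from Section \ref{sec:Non-displaceability-above-the} removes this assumption a posteriori. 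Given these bounds, the Lagrange multiplier $\eta$ along a flow line stays bounded, compactness of moduli spaces follows, and a standard continuation/monotone homotopy argument from $A_{H-k}$ to $A_{H-k}^F$ produces a chain map whose induced map on homology is an isomorphism; then $RFH_*(A_{H-k}^F)\ne 0$ forces $\mathrm{Crit}(A_{H-k}^F)\ne\emptyset$.

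For the second statement, I would follow the scheme of \cite{AlbersFrauenfelder2008}: assume for contradiction that $\psi$ has only finitely many leaf-wise intersection points, for a generic $\psi$. Under the assumption $\dim H_*(\Lambda M;\mathbb{Z}_2)=\infty$, the identification of the $E^2$-page (or the long exact sequence derived from \eqref{eq:ses}) with the singular homology of $\Lambda M\times\mathbb{R}^+$ shows that $RFH_*(A_{H-k})$ has infinite total rank, so infinitely many grading levels must contain non-trivial generators. Combining the Morse index computations of \cite{MerryPaternain2010} with the uniform action bounds obtained by a standard argument (action of a critical point of $A_{H-k}^F$ in a continuation window is bounded by a constant depending on $\|F\|$), one sees that only finitely many critical points of $A_{H-k}^F$ could realize these generators, yielding a contradiction. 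Genericity of $\psi$ (respectively of $g$) enters to ensure that all critical points of $A_{H-k}^F$ are non-degenerate, so their count bounds the rank of $RFH_*$ from above. The remaining verifications---smooth structure on moduli spaces, compactness, and transversality for generic almost complex structures---are routine adaptations of the arguments already assembled in the earlier sections of the paper and in \cite{AlbersFrauenfelder2010c,AlbersFrauenfelder2008}.
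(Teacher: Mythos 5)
The paper does \emph{not} follow the route you propose.  Its perturbed Rabinowitz action functional is $A^F_{G-k}$ for a Moser pair $(G,F)$, where $G(t,x)=\chi(t)G_0(x)$ and $G_0\in C^\infty_c(T^*M,\mathbb{R})$ is a compactly supported defining Hamiltonian for $\Sigma_k$ (i.e.\ $G_0^{-1}(0)=\Sigma_k$ and $X_{G_0}|_{\Sigma_k}=X_H|_{\Sigma_k}$), with $\chi$ supported in $(0,1/2)$ and $F$ supported in $(1/2,1)$.  This is the standard Albers--Frauenfelder setup; the $L^\infty$ bound on the $x$-component then comes for free because $G_0$ is constant at infinity (no escape argument), while the $\eta$-bound uses only the virtual restricted contact type of $\Sigma_k$, already established in Theorem \ref{thm:bounding the lagrange multiplier}.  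Non-vanishing of $RFH_*(A^F_{G-k})$ follows by a continuation isomorphism to $RFH_*(\Sigma_k,T^*M)$ and the identification with $RFH_*(A_{H-k})$ from Proposition \ref{quadr is const}.  Your proposal instead works with the mechanical Hamiltonian $H$ in the perturbed functional and plans to re-run the Abbondandolo--Schwarz $L^\infty$ estimates with the $F$-terms absorbed; that is a genuinely different and considerably harder route.

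There are two concrete gaps.  First, your primary definition $A^F_{H-k}(x,\eta)=A_{H-k}(x,\eta)-\int_{S^1}F(t,x)\,dt$ (no $\chi$-weight on the $H-k$ term) does not produce leaf-wise intersection points: along a critical loop, $H(x)$ is conserved on $[0,1/2]$ but not on $[1/2,1]$ where $X_F$ acts, so $\int_{S^1}(H-k)\,dt=0$ no longer forces $x(S^1)\subseteq\Sigma_k$.  You gesture at ``the standard cut-off variant,'' but with the necessary $\chi$-weighting the $\eta'$-equation becomes $\eta'(s)=\int_{S^1}\chi(t)(H(x(s,t))-k)\,dt$, which only controls $\|p(s,\cdot)\|_{L^2}$ on $\operatorname{supp}\chi\subsetneq S^1$; the claim that the perturbation ``can be absorbed into the inhomogeneous terms'' does not address this loss.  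Using a compactly supported $G_0$ (as the paper does) avoids the problem entirely.  Second, your argument for infinitely many leaf-wise intersections under $\dim H_*(\Lambda M;\mathbb{Z}_2)=\infty$ overlooks \emph{periodic} leaf-wise intersection points: if $\mathcal{L}_y$ is a closed orbit then a single such $y$ can carry infinitely many critical points of the perturbed functional (shifting $\eta$ by multiples of the period), so ``finitely many leaf-wise intersection points'' does not force ``finitely many critical points.''  The paper handles this by restricting to the residual set $\widetilde{\mathcal{F}}_{\textrm{reg}}(G)\subseteq\mathcal{F}_{\textrm{reg}}(G)$ of perturbations $F$ for which $\phi_1^F$ has no periodic leaf-wise intersections, making the assignment from critical points to leaf-wise intersection points injective (cf.\ the remark following Lemma \ref{lem:albersfrau}).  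Without this step the contradiction you seek does not go through.
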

We conclude this introduction with a remark about how the results
of this paper extend to more general Hamiltonian systems.
\begin{rem}
In fact, all of the results in the present paper are valid under more
general hypotheses, as we now explain. Recall that an autonomous Hamiltonian
$K\in C^{\infty}(T^{*}M,\mathbb{R})$ is called \textbf{\emph{Tonelli}}
if $K$ is \textbf{\emph{fibrewise strictly convex}}\textbf{ }and
\textbf{\emph{superlinear}}. In other words, the second differential
$d^{2}(K|_{T_{q}^{*}M})$ of $K$ restricted to each tangent space
$T_{q}^{*}M$ is positive definite, and \[
\lim_{\left|p\right|\rightarrow\infty}\frac{K(q,p)}{\left|p\right|}=\infty
\]
uniformly for $q\in M$. As with mechanical Hamiltonians, given a
Tonelli Hamiltonian $K$ and a weakly exact $2$-form $\sigma$, there
exists a critical value $c(K,\sigma)$ called the \textbf{\emph{Ma\~n\'e
critical value}}\emph{. }As before, $c(K,\sigma)<\infty$ if and only
if $\widetilde{\sigma}$ admits a bounded primitive. Let us say that
a closed connected orientable hypersurface $\Sigma\subseteq T^{*}M$
is a \textbf{\emph{Ma\~n\'e supercritical hypersurface}}\textbf{
}if there exists a Tonelli Hamiltonian $K$ such that $\Sigma=K^{-1}(k)$
for some $k>c(K,\sigma)$.

Both Theorem \ref{thm:my main theorem} and Theorem \ref{thm:Leafwise}
extend to Ma\~n\'e supercritical hypersurfaces. Namely: the Rabinowitz
Floer homology of any Ma\~n\'e supercritical hypersurface is defined
and non-zero. In particular, Ma\~n\'e supercritical hypersurfaces
are never displaceable. Secondly, given any Ma\~n\'e supercritical
hypersurface $\Sigma$ and any $\psi\in\mbox{\emph{Ham}}_{c}(T^{*}M,\omega)$
there exists a leaf-wise intersection point for $\psi$ in $\Sigma$.
Moreover, if $\dim\, H_{*}(\Lambda M;\mathbb{Z}_{2})=\infty$ and
$\Sigma$ is non-degenerate (which holds generically), then for a
generic $\psi\in\mbox{\emph{Ham}}_{c}(T^{*}M,\omega)$ there exist
infinitely many leaf-wise intersection points for $\psi$ in $\Sigma$. 

More details about these results can be found in \cite{Merry2011}.
\end{rem}
\emph{Acknowledgments. }I would like to thank my Ph.D. adviser Gabriel
P. Paternain for many helpful discussions. I am also extremely grateful
to Alberto Abbondandolo, Peter Albers and Urs Frauenfelder for several
stimulating remarks and insightful suggestions, and for pointing out
errors in previous drafts of this work.

\section{Preliminaries}

We denote by $\overline{\mathbb{R}}$ the extended real line $\overline{\mathbb{R}}:=\mathbb{R}\cup\{\pm\infty\}$,
with the differentiable structure induced by the bijection $[-\pi/2,\pi/2]\rightarrow\overline{\mathbb{R}}$
given by \[
s\mapsto\begin{cases}
\tan s & s\in(-\pi/2,\pi/2)\\
\pm\infty & s=\pm\pi/2.
\end{cases}
\]
We denote by $\mathbb{R}^{+},\mathbb{R}_{0}^{+}$ the spaces $(0,\infty)$
and $[0,\infty)$, with similar conventions for $\mathbb{R}^{-},\mathbb{R}_{0}^{-}$.
We will often identify $S^{1}$ with $\mathbb{R}/\mathbb{Z}$. We
adopt throughout the convenient convention that any manifold asserted
to have negative dimension is in fact, empty. Another convention we
use throughout is: given a function $f(s,t)$ of two variables $s,t$
(usually $(s,t)\in\mathbb{R}\times\mathbb{T}$) we let $f':=\partial_{s}f$
and $\dot{f}:=\partial_{t}f$. Throughout the paper we will freely
and ambiguously use the isometry $TM\cong T^{*}M,\, v\mapsto\left\langle v,\cdot\right\rangle $,
determined by the Riemannian metric $g$, to identify points in $T_{q}M$
with points in $T_{q}^{*}M$.

\textbf{All the sign conventions used in this paper match those of
\cite{AbbondandoloSchwarz2009}}.

\subsection{The loop spaces}

$\ $\vspace{6 pt}

Let $W^{1,2}([0,1],M)$ denote the Hilbert manifold of paths $q:[0,1]\rightarrow M$
of Sobolev class $W^{1,2}$. Note that elements of $W^{1,2}([0,1],M)$
are continuous. Let $\Lambda M$ denote the submanifold consisting
of loops $q:S^{1}\rightarrow M$ of Sobolev class $W^{1,2}$. Note
that $\Lambda M$ is homotopy equivalent to both $C^{0}(S^{1},M)$
and $C^{\infty}(S^{1},M)$. We can identify $T_{q}\Lambda M$ with
$W^{1,2}(S^{1},q^{*}TM)$, that is, the sections $\zeta:S^{1}\rightarrow q^{*}TM$
of class $W^{1,2}$. Given a free homotopy class $\alpha\in[S^{1},M]$,
let $\Lambda_{\alpha}M\subseteq\Lambda M$ denote the connected component
of $\Lambda M$ consisting of the loops $q\in\Lambda M$ belonging
to the free homotopy class $\alpha$. Given $\alpha\in[S^{1},M]$,
we write $-\alpha$ for the free homotopy class that contains the
loops $q^{-}(t):=q(-t)$ for $q\in\Lambda_{\alpha}M$. 

Similarly we let $W^{1,2}([0,1],T^{*}M)$ denote the Hilbert manifold
of paths $x:[0,1]\rightarrow T^{*}M$ of Sobolev class $W^{1,2}$.
Note that elements of $W^{1,2}([0,1],T^{*}M)$ are continuous. Denote
by $\Lambda T^{*}M$ the submanifold of loops $x:S^{1}\rightarrow T^{*}M$
of Sobolev class $W^{1,2}$. Note that $\Lambda T^{*}M$ is homotopy
equivalent to both $C^{0}(S^{1},T^{*}M)$ and $C^{\infty}(S^{1},T^{*}M)$.
The tangent space $T_{x}\Lambda T^{*}M$ can be identified with $W^{1,2}(S^{1},x^{*}T^{*}M)$,
that is, the sections $\xi:S^{1}\rightarrow x^{*}TT^{*}M$ of class
$W^{1,2}$. Given $\alpha\in[S^{1},M]$, we let $\Lambda_{\alpha}TM$
denote the set of loops $x\in\Lambda T^{*}M$ whose projection $\pi\circ x$
lies in $\Lambda_{\alpha}M$.\newline 

Using the metric $g=\left\langle \cdot,\cdot\right\rangle $ on $M$
we obtain a metric $\left\langle \left\langle \cdot,\cdot\right\rangle \right\rangle _{g}$$ $on
$\Lambda M\times\mathbb{R}^{+}$ via \begin{equation}
\left\langle \left\langle (\zeta,b),(\vartheta,e)\right\rangle \right\rangle _{g}:=\int_{S^{1}}\left\{ \left\langle \zeta,\vartheta\right\rangle +\left\langle \nabla_{t}\zeta,\nabla_{t}\vartheta\right\rangle \right\} dt+be,\label{eq:the metric on the loop space}
\end{equation}
where $\nabla$ denotes the Levi-Civita connection of $(M,g)$.

Let $\mathcal{J}$ denote the space of $1$-periodic almost complex
structures on $T^{*}M$ with finite $L^{\infty}$ norm, and equip
$\mathcal{J}$ with the $L^{\infty}$ norm. The metric $g$ determines
a special autonomous almost complex structure $J_{g}\in\mathcal{J}$
called the \textbf{metric almost complex structure}. To define the
metric almost complex structure, we first recall that the metric $g$
determines a direct summand $T^{h}T^{*}M$ of the vertical tangent
bundle $T^{v}T^{*}M:=\ker\, d\pi$, together with an isomorphism \[
T_{x}T^{*}M=T_{x}^{h}T^{*}M\oplus T_{x}^{v}T^{*}M\cong T_{q}M\oplus T_{q}^{*}M,\ \ \ x=(q,p)\in T^{*}M.
\]
The metric almost complex structure $J_{g}$ is defined in terms of
this splitting by \begin{equation}
J_{g}:=\left(\begin{array}{cc}
0 & -\mathbb{1}\\
\mathbb{1} & 0
\end{array}\right).\label{eq:metric acs}
\end{equation}
Let $\mathcal{J}(\omega)$ denote the space of 1-periodic almost complex
structures on $T^{*}M$ that are $\omega$-compatible and satisfy
$\left\Vert J\right\Vert _{\infty}<\infty$. In general $J_{g}\notin\mathcal{J}(\omega)$.
However if $B_{r}(J_{g})$ denotes the open ball of radius $r>0$
about the metric almost complex structure $J_{g}$ in $\mathcal{J}$
then \cite[Proposition 4.1]{Lu1996} implies that there exists a constant
$\varepsilon_{0}=\varepsilon_{0}(g)>0$ (which depends continuously
on $g$) such that if $r>\varepsilon_{0}\left\Vert \sigma\right\Vert _{\infty}$
then%
\footnote{In fact, \cite[Proposition 4.1]{Lu1996} shows that for $r>\varepsilon_{0}\left\Vert \sigma\right\Vert _{\infty}$
we may even find\textbf{ }geometrically bounded almost complex structures
in $\mathcal{J}(\omega)\cap B_{r}(J_{g})$; see Remark \ref{rem:geometrically bounded}.%
} \begin{equation}
\mathcal{J}(\omega)\cap B_{r}(J_{g})\ne\emptyset\ \ \ \mbox{if }r>\varepsilon_{0}\left\Vert \sigma\right\Vert _{\infty}.\label{eq:ball not empty}
\end{equation}
This will be important in the proof of Theorem \ref{thm:l infinity};
see also Remark \ref{rem:shrinking sigma}. Given $J\in\mathcal{J}(\omega)$
we obtain a 1-periodic Riemannian metric $\left\langle \cdot,\cdot\right\rangle _{J}=\omega(J\cdot,\cdot)$
on $T^{*}M$. We will write $\left\langle \left\langle \cdot,\cdot\right\rangle \right\rangle _{J}$
for the $L^{2}$-metric on $\Lambda T^{*}M\times\mathbb{R}$ defined
by \begin{equation}
\left\langle \left\langle (\xi,b),(\rho,e)\right\rangle \right\rangle _{J}:=\int_{S^{1}}\left\langle \xi,\rho\right\rangle _{J}+be.\label{eq:the metric Jsigma}
\end{equation}
Finally let us remark that the first Chern class $c_{1}(T^{*}M,J)=0$
for any $J\in\mathcal{J}(\omega)$; one way to see this is that the
twisted symplectic manifold $(T^{*}M,\omega)$ admits a Lagrangian
distribution $T^{v}T^{*}M$ (see for example \cite[Example 2.10]{Seidel2000}).

\subsection{Ma\~n\'e's critical values}

$\ $\vspace{6 pt}

We now recall the definition of the two critical values $c$ and $c_{0}$
associated to the triple $(g,\sigma,U)$, introduced by Ma\~n\'e
in \cite{Mane1996}, which play a decisive role in all that follows.
General references for the results stated below are \cite[Proposition 2-1.1]{ContrerasIturriaga1999}
or \cite[Appendix A]{BurnsPaternain2002}.

Fix $U\in C^{\infty}(M,\mathbb{R})$, and let $H:T^{*}M\rightarrow\mathbb{R}$
be defined by $H:=H_{g}+\pi^{*}U$. Given $k\in\mathbb{R}$, let $\Sigma_{k}:=H^{-1}(k)$.
Define the \textbf{Ma\~n\'e critical value}\emph{ }associated to
the metric $g$, the weakly exact $2$-form $\sigma$ and the potential
$U$ by:\emph{ }\begin{equation}
c=c(g,\sigma,U):=\inf_{\theta}\sup_{q\in\widetilde{M}}\widetilde{H}(q,\theta_{q}),\label{eq:manc}
\end{equation}
where the infimum is taken over all $1$-forms $\theta$ on $\widetilde{M}$
with $d\theta=\widetilde{\sigma}$, and $\widetilde{H}$ is the lift
of $H$ to $T^{*}\widetilde{M}$. Thus $c(g,\sigma,U)<\infty$ if
and only if $\widetilde{\sigma}$ admits a bounded primitive.

If $\sigma$ is exact, define the \textbf{strict Ma}\~n\'e\textbf{
critical value}\emph{ $c_{0}=c_{0}(g,\sigma,U)$ }by \begin{equation}
c_{0}=c_{0}(g,\sigma,U):=\inf_{\theta}\sup_{q\in M}H(q,\theta_{q})<\infty,\label{eq:mansc}
\end{equation}
that is, the same definition only working directly on $T^{*}M$ rather
than lifting to $T^{*}\widetilde{M}$. If $\sigma$ is not exact,
set $c_{0}(g,\sigma,U):=\infty$. Note in all cases we have \[
c\leq c_{0}\leq\infty.
\]

The critical value can also be defined in Lagrangian terms. Let $L:=L_{g}-\pi^{*}U$
denote the Fenchel dual Lagrangian to $H$, and let $\widetilde{L}$
denote the lift of $L$ to $T\widetilde{M}$. Fix a primitive $\theta$
of $\widetilde{\sigma}$, and think of $\theta$ as a smooth function
on $T\widetilde{M}$. Now consider the Lagrangian $\widetilde{L}+\theta$.
The \textbf{action}\emph{ }$\mathbb{A}_{\widetilde{L}+\theta}(\gamma)$
on an absolutely continuous curve $\gamma:[a,b]\rightarrow\widetilde{M}$
is defined by \[
\mathbb{A}_{\widetilde{L}+\theta}(\gamma):=\int_{a}^{b}(\widetilde{L}+\theta)(\gamma(t),\dot{\gamma}(t))dt=\int_{a}^{b}\widetilde{L}(\gamma(t),\dot{\gamma}(t))+\theta_{\gamma(t)}(\dot{\gamma}(t))dt,
\]
and an alternative definition of $c$ is the following:\[
c:=\inf\left\{ k\in\mathbb{R}\,:\,\mathbb{A}_{\widetilde{L}+\theta+k}(\gamma)\geq0\ \forall\mbox{ a.c. closed curves defined on }[0,T],\,\forall T\in\mathbb{R}\right\} .
\]
If $\sigma$ is exact then we can pick a primitive $\theta$ of $\sigma$
and consider the same definition on $TM$. In this case we have: \[
c:=\inf\left\{ k\in\mathbb{R}\,:\,\mathbb{A}_{L+\theta+k}(\gamma)\geq0\ \forall\mbox{ a.c. closed homotopically trivial curves defined on }[0,T],\,\forall T\in\mathbb{R}\right\} ;
\]
\[
c_{0}:=\inf\left\{ k\in\mathbb{R}\,:\,\mathbb{A}_{L+\theta+k}(\gamma)\geq0\ \forall\mbox{ a.c. closed homologically trivial curves defined on }[0,T],\,\forall T\in\mathbb{R}\right\} .
\]
It is immediate from \eqref{eq:mansc} that\begin{equation}
c(g,\sigma,U)\geq\max_{q\in M}U(q).\label{eq:c ge e}
\end{equation}
Let us also denote by\[
e_{0}=e_{0}(g,\sigma,U):=\inf\left\{ k\in\mathbb{R}\,:\,\pi(\Sigma_{k})=M\right\} .
\]
For $k>e_{0}$ the intersection of $\Sigma_{k}$ with any fibre $T_{q}^{*}M$
is diffeomorphic to a sphere $S^{n-1}$. We always have $c\geq e_{0}$,
and in a lot of cases the strict inequality $c>e_{0}$ holds (see
\cite[Theorem 1.3]{PaternainPaternain1997a}). In all cases if $k>c$
then $k$ is necessarily a regular value of $H$.\newline

Denote by $\mathcal{R}(M)$ the set of all (smooth) Riemannian metrics
$g$ on $M$, and denote by $\Omega_{\textrm{we}}^{2}(M)$ the set
of closed weakly exact 2-forms on $M$.
\begin{defn}
\label{def:the set Uk}Denote by \[
\mathcal{O}\subseteq\mathcal{R}(M)\times\Omega_{\textrm{\emph{we}}}^{2}(M)\times C^{\infty}(M,\mathbb{R})\times\mathbb{R}
\]
the set of quadruples $(g,\sigma,U,k)$ such that \[
k>c(g,\sigma,U).
\]

\end{defn}
It will be important later on to know how the critical value scales
when we scale $\sigma$. Specifically, let us note the following lemma,
whose proof is immediate from \eqref{eq:manc} and \eqref{eq:mansc}.
\begin{lem}
\label{lem:scaling c}Given $s\in[0,1]$ it holds that \[
c(g,s\sigma,s^{2}U)=s^{2}c(g,\sigma,U);
\]
\[
c_{0}(g,s\sigma,s^{2}U)=s^{2}c(g,\sigma,U).
\]

\end{lem}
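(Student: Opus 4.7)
The plan is to proceed by direct calculation from the defining formulas \eqref{eq:manc} and \eqref{eq:mansc}, exploiting the linear correspondence between primitives of $\widetilde\sigma$ and primitives of $s\widetilde\sigma$. The whole statement is essentially a one-line observation, but let me lay out the steps carefully.

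First I would treat the case $s\in(0,1]$. The key observation is the bijection
\[
\{\theta\in\Omega^{1}(\widetilde M)\,:\,d\theta=\widetilde\sigma\}\;\longleftrightarrow\;\{\eta\in\Omega^{1}(\widetilde M)\,:\,d\eta=s\widetilde\sigma\},\qquad \theta\longmapsto s\theta,
\]
which is well defined and invertible precisely because $s\ne0$. Under this correspondence, one has pointwise
\[
\tfrac12\bigl|(s\theta)_{q}\bigr|^{2}+s^{2}\widetilde U(q)=s^{2}\Bigl(\tfrac12|\theta_{q}|^{2}+\widetilde U(q)\Bigr)=s^{2}\widetilde H(q,\theta_{q}).
\]
Taking $\sup_{q\in\widetilde M}$ of both sides and then $\inf$ over the relevant class of primitives gives
\[
c(g,s\sigma,s^{2}U)=\inf_{\eta}\sup_{q}\Bigl(\tfrac12|\eta_{q}|^{2}+s^{2}\widetilde U(q)\Bigr)=s^{2}\inf_{\theta}\sup_{q}\widetilde H(q,\theta_{q})=s^{2}c(g,\sigma,U),
\]
as claimed. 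The same argument, performed downstairs on $T^{*}M$ rather than on $T^{*}\widetilde M$, yields $c_{0}(g,s\sigma,s^{2}U)=s^{2}c_{0}(g,\sigma,U)$; this holds vacuously (both sides infinite) if $\sigma$ is not exact, and by the identical substitution $\eta=s\theta$ if $\sigma$ is exact.

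Finally I would dispose of the boundary case $s=0$. Here $s\sigma=0$ is exact with primitive $\theta\equiv0$, and $s^{2}U=0$, so both $c(g,0,0)$ and $c_{0}(g,0,0)$ are bounded above by $\sup_{q}\tfrac12|0|^{2}=0$; combined with the obvious lower bound $c\ge c_{0}\ge\max U=0$ from \eqref{eq:c ge e} applied with $U=0$, this gives $c(g,0,0)=c_{0}(g,0,0)=0=0\cdot c(g,\sigma,U)$, matching the formula (and resolving the apparent typo in the statement of the second identity, where the right-hand side should read $s^{2}c_{0}(g,\sigma,U)$). There is no real obstacle in this proof; the only thing to double-check is that the bijection $\theta\mapsto s\theta$ genuinely preserves the full class of primitives (i.e.\ that one does not lose primitives differing by a closed 1-form), which is clear since the map is linear and invertible for $s\ne 0$.
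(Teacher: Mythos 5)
Your proof is correct and is essentially the paper's intended one; the paper merely declares the lemma ``immediate from \eqref{eq:manc} and \eqref{eq:mansc}'', and your substitution $\theta\mapsto s\theta$ together with the homogeneity $\widetilde H(q,s\theta_q)$ with $s^2U$ in place of $U$ equalling $s^2\widetilde H(q,\theta_q)$ is exactly that computation, and you are also right that the second displayed identity contains a typo ($c_0$ should appear on the right-hand side). One small slip in your treatment of $s=0$: you write ``$c\ge c_0\ge\max U$'', but the paper's convention is $c\le c_0$; this does not affect the conclusion, since both $c$ and $c_0$ are directly bounded below by $\max U$ from their definitions (for any admissible $\theta$ one has $\widetilde H(q,\theta_q)\ge\widetilde U(q)$, so the sup over $q$ is $\ge\max U$, hence so is the inf over $\theta$), and it is this fact, not the chain through $c_0$, that yields the lower bound.
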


\subsection{\label{sub:The-crucial-observation}Symplectic atoroidality}

$\ $\vspace{6 pt}

We remind the reader that $\sigma\in\Omega^{2}(M)$ is a weakly exact
$2$-form whose pullback $\widetilde{\sigma}\in\Omega^{2}(\widetilde{M})$
admits a bounded primitive $\theta$. In this subsection we state
and prove the key observation mentioned in the introduction that implies
that the symplectic form $\omega$ is symplectically atoroidal. A
similar idea appeared in Niche \cite{Niche2006}, although there the
additional assumption was made that $M$ admits a metric of negative
curvature. Here we require only the weaker assumption that $\widetilde{\sigma}$
is weakly exact and admits a bounded primitive%
\footnote{This really is a weaker assumption; if $M$ admits a metric of negative
curvature then any closed $2$-form in $M$ has bounded primitives
in $\widetilde{M}$ \cite{Gromov1991}, whilst the converse is clearly
not true.%
}.\newline

The key lemma we use is the following, which originally appeared in
\cite[Lemma 2.2]{Merry2010}. In the statement, $\mathbb{T}^{2}$
denotes the $2$-torus.
\begin{lem}
\emph{\label{lem:key observation}}For any smooth map $f:\mathbb{T}^{2}\rightarrow M$,
$f^{*}\sigma$ is exact.\end{lem}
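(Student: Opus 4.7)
Since $H^2(\mathbb{T}^2;\mathbb{R})\cong\mathbb{R}$, the form $f^*\sigma$ is exact on $\mathbb{T}^2$ if and only if $\int_{\mathbb{T}^2}f^*\sigma=0$, so the plan is to establish this integral vanishing. The strategy is the classical ``large square'' argument that exploits the bounded primitive $\theta$ of $\widetilde{\sigma}$ on $\widetilde{M}$: we lift $f$ to the universal covers and compare a quadratic (area) growth against a linear (boundary) growth.

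First I would lift $f$. Since the composition $\mathbb{R}^2\to\mathbb{T}^2\xrightarrow{f}M$ has simply connected domain, it factors through the universal cover as $\widetilde{f}:\mathbb{R}^2\to\widetilde{M}$. A standard monodromy argument (constancy of deck transformations on connected components) produces elements $g_1,g_2\in\pi_1(M)$ such that
\[
\widetilde{f}(s+1,t)=g_1\cdot\widetilde{f}(s,t),\qquad \widetilde{f}(s,t+1)=g_2\cdot\widetilde{f}(s,t)
\]
for all $(s,t)\in\mathbb{R}^2$, and by iteration
\[
\widetilde{f}(s+m,t+n)=g_1^m g_2^n\cdot\widetilde{f}(s,t)
\]
for all $(m,n)\in\mathbb{Z}^2$. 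Since each deck transformation is an isometry of the lifted metric on $\widetilde{M}$, the pointwise norm $|d\widetilde{f}|$ is $\mathbb{Z}^2$-invariant, hence globally bounded by some constant $K$; the same deck-invariance of $\widetilde{\sigma}$ means that $\widetilde{f}^{\,*}\widetilde{\sigma}$ descends to $f^*\sigma$ on each unit square, so that for every $N\in\mathbb{N}$
\[
\int_{[0,N]^2}\widetilde{f}^{\,*}\widetilde{\sigma}=N^{2}\int_{\mathbb{T}^{2}}f^*\sigma.
\]

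Next I would apply Stokes' theorem using the bounded primitive $\theta$. Writing $\widetilde{\sigma}=d\theta$ with $\|\theta\|_{\infty}\leq C_0$, we have
\[
\int_{[0,N]^{2}}\widetilde{f}^{\,*}\widetilde{\sigma}=\int_{\partial[0,N]^{2}}\widetilde{f}^{\,*}\theta.
\]
The pointwise estimate $|\widetilde{f}^{\,*}\theta(v)|\leq\|\theta\|_{\infty}\cdot|d\widetilde{f}(v)|\leq C_0 K|v|$ combined with $\mathrm{length}(\partial[0,N]^{2})=4N$ gives
\[
\left|\int_{\partial[0,N]^{2}}\widetilde{f}^{\,*}\theta\right|\leq 4C_0K\cdot N.
\]
Combining the two displays,
\[
N^{2}\left|\int_{\mathbb{T}^{2}}f^*\sigma\right|\leq 4C_0K\cdot N,
\]
and dividing by $N^{2}$ and letting $N\to\infty$ forces $\int_{\mathbb{T}^{2}}f^*\sigma=0$, which completes the proof.

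The conceptual heart of the argument, and the only place where the hypotheses are used in an essential way, is the mismatch between the quadratic growth of the area integral and the linear growth of the boundary integral; this is precisely what the bounded-primitive assumption delivers, and it is also the step I expect to be the most delicate to present cleanly, since it requires a careful justification that $|d\widetilde{f}|$ is uniformly bounded (via deck-invariance) so that the $\|\theta\|_{\infty}<\infty$ hypothesis actually translates into a uniform bound on the integrand $\widetilde{f}^{\,*}\theta$ along the boundary. Everything else (the existence of $\widetilde{f}$, the existence of $g_1,g_2$, and Stokes) is routine.
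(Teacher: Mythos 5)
Your proof is correct, but it takes a genuinely different route from the paper. The paper's argument is group-theoretic: it sets $G:=f_{*}(\pi_{1}(\mathbb{T}^{2}))\leq\pi_{1}(M)$, observes $G$ is amenable (as a quotient of $\mathbb{Z}^{2}$), and invokes \cite[Lemma 5.3]{Paternain2006}, which averages the bounded primitive $\theta$ over $G$ to produce a $G$-invariant primitive $\theta'$ of $\widetilde{\sigma}$ that descends to a primitive of $f^{*}\sigma$ on $\mathbb{T}^{2}$. Your argument is instead a direct growth comparison (the ``large square'' or Gromov trick): lift $f$ to $\widetilde{f}:\mathbb{R}^{2}\to\widetilde{M}$, use $\mathbb{Z}^{2}$-equivariance to get a uniform bound on $|d\widetilde{f}|$, and play off the quadratic growth $\int_{[0,N]^{2}}\widetilde{f}^{*}\widetilde{\sigma}=N^{2}\int_{\mathbb{T}^{2}}f^{*}\sigma$ against the linear boundary estimate $\bigl|\int_{\partial[0,N]^{2}}\widetilde{f}^{*}\theta\bigr|\leq 4C_{0}KN$ coming from Stokes and $\|\theta\|_{\infty}<\infty$. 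Your route is more elementary and self-contained (only covering-space theory and Stokes), and establishes the vanishing of the cohomology class directly, which suffices since $H^{2}(\mathbb{T}^{2};\mathbb{R})\cong\mathbb{R}$. The paper's route, while relying on an external averaging lemma, is conceptually cleaner in that it produces an explicit primitive of $f^{*}\sigma$ on $\mathbb{T}^{2}$ rather than merely showing the period vanishes, and it generalizes immediately to maps from any closed surface with amenable image in $\pi_{1}(M)$.
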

\begin{proof}
Consider $G:=f_{*}(\pi_{1}(\mathbb{T}^{2}))\leq\pi_{1}(M).$ Then
$G$ is amenable, since $\pi_{1}(\mathbb{T}^{2})=\mathbb{Z}^{2}$,
which is amenable. Then \cite[Lemma 5.3]{Paternain2006} tells us
that since $\left\Vert \theta\right\Vert _{\infty}<\infty$ we can
replace $\theta$ by a $G$-invariant primitive $\theta'$ of $\widetilde{\sigma}$,
which descends to define a primitive $\theta''\in\Omega^{1}(\mathbb{T}^{2})$
of $f^{*}\sigma$.
\end{proof}
Given a free homotopy class $\alpha\in[S^{1},M]$, fix a reference
loop $x_{\alpha}=(q_{\alpha},p_{\alpha})\in\Lambda_{\alpha}T^{*}M$.
It will be convenient to insist that $x_{-\alpha}(t)=x_{\alpha}(-t)$,
and that $x_{0}$ has image in one fibre, that is, $q_{0}$ is constant.
Let $C:=S^{1}\times[0,1]$. Let $\bar{x}\in C^{0}(C,T^{*}M)\cap W^{1,2}(C,T^{*}M)$
denote any map such that $\bar{x}(\cdot,0)=x$ and $\bar{x}(\cdot,1)=x_{\alpha}$.
Then thanks to the previous lemma the integral $\int_{C}\bar{x}^{*}\pi^{*}\sigma$
is is independent of the choice of $\bar{x}$. Similarly given any
$q\in\Lambda_{\alpha}M$, let $\bar{q}\in C^{0}(C,M)\cap W^{1,2}(C,M)$
denote any map such that $\bar{q}(\cdot,0)=q$ and $\bar{q}(\cdot,1)=q_{\alpha}$.
Then the integral $\int_{C}\bar{q}^{*}\sigma$ is independent of the
choice of $\bar{q}$. Note that in particular if $q=\pi\circ x$ then\begin{equation}
\int_{C}\bar{x}^{*}\pi^{*}\sigma=\int_{C}\bar{q}^{*}\sigma,\label{eq:czequalscx}
\end{equation}
and hence, recalling that $\lambda_{0}=pdq$ is the Liouville $1$-form
on $T^{*}M$, it holds that\begin{equation}
\int_{C}\bar{x}^{*}\omega=\int_{S^{1}}x^{*}\lambda_{0}+\int_{C}\bar{x}^{*}\pi^{*}\sigma=\int_{S^{1}}x^{*}\lambda_{0}+\int_{C}\bar{q}^{*}\sigma.\label{eq:liouville 1 form and c}
\end{equation}

\section{The free time action functional}

\subsection{The definition of $S_{L+k}$}

$\ $\vspace{6 pt}

The first functional we work with is defined on $\Lambda M\times\mathbb{R}^{+}$.
Given a potential $U\in C^{\infty}(M,\mathbb{R})$ and $k\in\mathbb{R}$
let \[
L(q,v):=\frac{1}{2}\left|v\right|^{2}-U(q)
\]
and define the \textbf{free time action functional}\emph{ }$S_{L+k}:\Lambda M\times\mathbb{R}^{+}\rightarrow\mathbb{R}$
by\emph{ }\[
S_{L+k}(q,T):=T\int_{S^{1}}\left(L\left(q(t),\frac{\dot{q}(t)}{T}\right)+k\right)dt+\int_{C}\bar{q}^{*}\sigma.
\]
This is well defined by the observations in the previous section.
Moreover $S_{L+k}\in C^{2}(\Lambda M\times\mathbb{R}^{+},\mathbb{R})$;
see \cite[p195]{Merry2010}. Let $\mbox{Crit}(S_{L+k})$ denote the
set of critical points of $S_{L+k}$, and given $\alpha\in[S^{1},M]$,
let $\mbox{Crit}(S_{L+k};\alpha)$ denote $\mbox{Crit}(S_{L+k})\cap(\Lambda_{\alpha}M\times\mathbb{R}^{+})$.
Given an interval $(a,b)\subseteq\mathbb{R}$, denote by $\mbox{Crit}^{(a,b)}(S_{L+k})$
the set $\mbox{Crit}(S_{L+k})\cap S_{L+k}^{-1}((a,b))$. This functional
was introduced in \cite{Merry2010}, and is a way of defining the
free time action functional previously studied in \cite{ContrerasIturriagaPaternainPaternain2000,Contreras2006}
when the magnetic form $\sigma$ is not exact.\newline 

It will be convenient to study what is essentially the lift of $S_{L+k}$
to the universal cover $\widetilde{M}$. Let $\widetilde{U}$ denote
a lift of $U$ to $\widetilde{M}$. Let $\widetilde{E}:T\widetilde{M}\rightarrow\mathbb{R}$
denote the \textbf{energy}\emph{ }of the Lagrangian $\widetilde{L}$:
\[
\widetilde{E}(q,v):=\frac{\partial\widetilde{L}}{\partial v}(q,v)(v)-\widetilde{L}(q,v).
\]
Fix a primitive $\theta$ of the lifted form $\widetilde{\sigma}$
on $\widetilde{M}$ with $\left\Vert \theta\right\Vert _{\infty}<\infty$,
and consider again the Lagrangian $\widetilde{L}+\theta:T\widetilde{M}\rightarrow\mathbb{R}$.
Define \[
\mathbb{S}_{\widetilde{L}+\theta+k}:W^{1,2}([0,1],\widetilde{M})\times\mathbb{R}^{+}\rightarrow\mathbb{R}
\]
by \[
\mathbb{S}_{\widetilde{L}+\theta+k}(q,T):=T\int_{0}^{1}\left(\widetilde{L}\left(q(t),\frac{\dot{q}(t)}{T}\right)+\frac{1}{T}\theta_{q(t)}(\dot{q}(t))+k\right)dt.
\]
In other words, $\mathbb{S}_{\widetilde{L}+\theta+k}$ is the standard
free time action functional of the Lagrangian $\widetilde{L}+\theta$
and the energy level $k$. The free time action functional has been
studied extensively in \cite{ContrerasIturriagaPaternainPaternain2000,Contreras2006}.
We wish to relate the functional $\mathbb{S}_{\widetilde{L}+\theta+k}$
to that of $S_{L+k}$. For each $\alpha\in[S^{1},M]$, fix a lift
$\widetilde{q}_{\alpha}:[0,1]\rightarrow\widetilde{M}$ of our reference
loops $q_{\alpha}\in\Lambda_{\alpha}M$. Define\begin{equation}
I(\alpha,\theta):=\int_{0}^{1}(\widetilde{q}_{\alpha}^{-})^{*}\theta\label{eq:ah}
\end{equation}
(where $\widetilde{q}_{\alpha}^{-}(t):=\widetilde{q}_{\alpha}(-t)$).
Note that as $q_{0}$ is constant, $I(0,\theta)=0$. It is shown in
\cite[p8]{Merry2010} that given $q\in\Lambda_{\alpha}M$ and $\widetilde{q}$
a lift of $q$ and $\bar{q}:C\rightarrow M$ a map as above that\begin{equation}
\int_{C}\bar{q}^{*}\sigma=\int_{0}^{1}\widetilde{q}^{*}\theta+I(\alpha,\theta),\label{eq:relating S and F}
\end{equation}
from which it follows that\begin{equation}
S_{L+k}(q,T)=\mathbb{S}_{\widetilde{L}+\theta+k}(\widetilde{q},T)+I(\alpha,\theta).\label{eq:relating to uni cover}
\end{equation}

Since $\left\Vert \theta\right\Vert _{\infty}<\infty$, we can find
constants $e_{1},e_{2},f_{1},f_{2},g_{1},g_{2}>0$ such that for all
$(q,v)\in T\widetilde{M}$ it holds that \begin{equation}
f_{1}\left|v\right|^{2}+f_{2}\geq(\widetilde{L}+\theta)(q,v)\geq e_{1}\left|v\right|^{2}-e_{2};\label{eq:superlinear-1}
\end{equation}
\[
\widetilde{E}(q,v)\geq g_{1}\left|v\right|^{2}-g_{2}.
\]
Given any $(q,T)\in\Lambda_{\alpha}M\times\mathbb{R}^{+}$, let $\widetilde{q}:[0,1]\rightarrow\widetilde{M}$
denote a lift of $q$ and define $\gamma:[0,T]\rightarrow\widetilde{M}$
by $\gamma(t):=\widetilde{q}(t/T)$. One computes\begin{align*}
\frac{\partial S_{L+k}}{\partial T}(q,T) & =\frac{1}{T}\int_{0}^{T}\left(k-\widetilde{E}(\gamma,\dot{\gamma})\right)dt\\
 & \leq\frac{1}{T}\int_{0}^{T}\left(k-\frac{g_{1}}{f_{1}}\widetilde{(L}+\theta)(\gamma,\dot{\gamma})+\frac{g_{1}f_{2}}{f_{1}}+g_{2}\right)dt\\
 & =\frac{g_{1}f_{2}}{f_{1}}+g_{2}+\left(1+\frac{g_{1}}{f_{1}}\right)k-\frac{g_{1}}{f_{1}T}\mathbb{S}_{\widetilde{L}+\theta+k}(\widetilde{q},T)\\
 & =\frac{g_{1}f_{2}}{f_{1}}+g_{2}+\left(1+\frac{g_{1}}{f_{1}}\right)k-\frac{g_{1}}{f_{1}T}S_{L+k}(q,T)+\frac{g_{1}I(\alpha,\theta)}{f_{1}T}.
\end{align*}
In particular, in the case $\alpha=0$, since $I(0,\theta)=0$ we
have proved the following lemma.
\begin{lem}
\label{lem:d/deta}There exists $h_{0}>0$ such that if $(q,T)\in\Lambda_{0}M\times\mathbb{R}^{+}$
and \[
S_{L+k}(q,T)>h_{0}T
\]
 then \[
\frac{\partial S_{L+k}}{\partial T}(q,T)<0.
\]

\end{lem}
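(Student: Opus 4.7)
The proof is essentially immediate from the displayed computation directly preceding the statement, so the plan is to simply identify the constant $h_0$ from the terms of that inequality. Specifically, the computation establishes the identity
\[
\frac{\partial S_{L+k}}{\partial T}(q,T) = \frac{g_{1}f_{2}}{f_{1}} + g_{2} + \left(1+\frac{g_{1}}{f_{1}}\right)k - \frac{g_{1}}{f_{1}T}S_{L+k}(q,T) + \frac{g_{1}I(\alpha,\theta)}{f_{1}T}
\]
for every $(q,T) \in \Lambda_\alpha M \times \mathbb{R}^+$, via Fenchel's inequality applied to the bounds \eqref{eq:superlinear-1} on $\widetilde{L}+\theta$ and the lower bound on the energy $\widetilde{E}$.

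The plan is to restrict to the component $\alpha = 0$. Since $q_0$ was chosen to be a constant loop, the lift $\widetilde{q}_0$ is constant, and hence
\[
I(0,\theta) = \int_0^1 (\widetilde{q}_0^-)^* \theta = 0.
\]
Thus the final error term in the displayed identity vanishes, leaving
\[
\frac{\partial S_{L+k}}{\partial T}(q,T) = \frac{g_{1}f_{2}}{f_{1}} + g_{2} + \left(1+\frac{g_{1}}{f_{1}}\right)k - \frac{g_{1}}{f_{1}T}S_{L+k}(q,T)
\]
for $(q,T) \in \Lambda_0 M \times \mathbb{R}^+$.

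I then set
\[
h_0 := f_2 + \frac{f_1 g_2}{g_1} + \left(1 + \frac{f_1}{g_1}\right)k,
\]
which is manifestly positive (the constants $f_1,f_2,g_1,g_2$ are positive, and one may enlarge $h_0$ if $k$ is negative to guarantee positivity, or simply note that positivity is not needed for the conclusion, only for the meaningfulness of the hypothesis $S_{L+k} > h_0 T$). With this choice, the assumption $S_{L+k}(q,T) > h_0 T$ becomes exactly $\frac{g_1}{f_1 T} S_{L+k}(q,T) > \frac{g_1 f_2}{f_1} + g_2 + (1 + \frac{g_1}{f_1})k$, which upon substitution into the identity yields $\partial_T S_{L+k}(q,T) < 0$. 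There is no substantive obstacle here; the only point worth highlighting is that the hypothesis $\alpha = 0$ is what allows the $I(\alpha,\theta)$ term to drop out, and the constant $h_0$ depends only on $g,\sigma,U,k$ via the superlinearity and energy constants $e_i,f_i,g_i$ and the value $k$.
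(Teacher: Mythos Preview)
Your proposal is correct and follows exactly the paper's approach: specialize the displayed estimate to $\alpha=0$ so that $I(0,\theta)=0$, and read off $h_0$ from the remaining constants. One very minor slip: the displayed computation in the paper is an \emph{inequality} (coming from the bounds $\widetilde{E}\geq g_1|v|^2-g_2$ and $\widetilde{L}+\theta\leq f_1|v|^2+f_2$), not an identity, but this of course only strengthens your conclusion.
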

Let us recall a few definitions. If $S:\mathcal{M}\rightarrow\mathbb{R}$
is a $C^{2}$ functional on a Hilbert manifold $\mathcal{M}$ equipped
with a Riemannian metric $G$, we say that $S$ satisfies $(\mbox{PS})_{a}$,
that is, \textbf{Palais-Smale condition at the level $a\in\mathbb{R}$,}\emph{
}if any sequence $(x_{i})\subseteq\mathcal{M}$ such that $S(x_{i})\rightarrow a$
and $\left\Vert \nabla S(x_{i})\right\Vert \rightarrow0$ admits a
convergent subsequence (where the gradient $\nabla S$ is taken with
respect to $G$). Let $\Psi_{\tau}$ denote the local flow defined
by the vector field $-\nabla S$, and let $(\tau_{-}(x),\tau_{+}(x))\subseteq\overline{\mathbb{R}}$
denote the maximal interval of existence of the flow line $\tau\mapsto\Psi_{\tau}(x)$.\newline

The next result is the key to defining the Morse (co)complex of $S_{L+k}$
(compare \cite[Proposition 11.1, Proposition 11.2]{AbbondandoloSchwarz2009}).
Recall the definition of the set $\mathcal{O}$ from Definition \ref{def:the set Uk}.
\begin{thm}
\textup{\emph{\label{thm:(Properties-of-)-1}(Properties of $S_{L+k}$
for $k>c(g,\sigma,U)$)}}

Fix $(g,\sigma,U,k)\in\mathcal{O}$. Let $\Psi_{\tau}$ denote the
local flow of $-\nabla S_{L+k}$, where the gradient is taken with
respect to the metric $\left\langle \left\langle \cdot,\cdot\right\rangle \right\rangle _{g}$
from \eqref{eq:the metric on the loop space}. Then:
\begin{enumerate}
\item $S_{L+k}$ is bounded below on $\Lambda M\times\mathbb{R}^{+}$ and
strictly positive on $\Lambda_{0}M\times\mathbb{R}^{+}$. Moreover
\[
\inf_{\Lambda_{0}M\times\mathbb{R}^{+}}S_{L+k}=0,\ \ \ \inf_{\textrm{\emph{Crit}}(S_{L+k};0)}S_{L+k}>0.
\]

\item If $\alpha\in[S^{1},M]$ is a non-trivial free homotopy class then
$\tau_{+}(q,T)=\infty$ for all $(q,T)\in\Lambda_{\alpha}M\times\mathbb{R}^{+}$.
If $(q,T)\in\Lambda_{0}M\times\mathbb{R}^{+}$ and $\tau_{+}(q,T)<\infty$
then if $(q_{\tau},T_{\tau}):=\Psi_{\tau}(q,T)$ then $S_{L+k}(q_{\tau},T_{\tau})\rightarrow0$,
$T_{\tau}\rightarrow0$ and $q_{\tau}$ converges to a constant loop
as $\tau\uparrow\tau_{+}(q,T)$. In particular this happens if \[
S_{L+k}(q,T)<\inf_{\textrm{\emph{Crit}}(S_{L+k};0)}S_{L+k}.
\]

\item If $\alpha\in[S^{1},M]$ is a non-trivial free homotopy class then
$\tau_{-}(q,T)=-\infty$ for all $(q,T)\in\Lambda_{\alpha}M\times\mathbb{R}^{+}$. 
\item There exists $h_{1}>0$ with the following properties: given $S>0$
define\[
\mathcal{A}(S):=\{S_{L+k}|_{\Lambda_{0}M\times\mathbb{R}^{+}}<S\}\cap\{T<h_{1}S\}
\]
Then $\mathcal{A}(S)\cap\mbox{\emph{Crit}}(S_{L+k})=\emptyset$ for
all $S>0$, and for any $S>0$, if $(q,T)\in\mathcal{A}(S)$ then
$\Psi_{\tau}(q,T)\in\mathcal{A}(S)$ for all $\tau\in(\tau_{-}(q,T),0]$.
Finally if $(q,T)\in\Lambda_{0}M\times\mathbb{R}^{+}$ is such that
$\tau_{-}(q,T)>-\infty$ and $S_{L+k}(q,T)\geq S$ then there exists
$\tau<0$ such that $\Psi_{\tau}(q,T)\in\mathcal{A}(S)$.
\end{enumerate}
\end{thm}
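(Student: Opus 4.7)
The central tool throughout is the identity \eqref{eq:relating to uni cover}, which writes $S_{L+k}(q,T)$ as $\mathbb{S}_{\widetilde{L}+\theta+k}(\widetilde{q},T)+I(\alpha,\theta)$, combined with the Lagrangian characterisation of the Ma\~n\'e critical value: $k>c(g,\sigma,U)$ is equivalent to $\mathbb{A}_{\widetilde{L}+\theta+k}(\gamma)\geq 0$ for every absolutely continuous closed curve $\gamma$ in $\widetilde{M}$, with strict positivity whenever $T(\gamma)>0$ because $\mathbb{A}_{\widetilde{L}+\theta+k}(\gamma)=\mathbb{A}_{\widetilde{L}+\theta+c}(\gamma)+(k-c)T(\gamma)$. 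All four assertions reduce, via this identity, to statements about $\mathbb{S}_{\widetilde{L}+\theta+k}$, for which the analogues are available in \cite{ContrerasIturriagaPaternainPaternain2000,Contreras2006}.

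For (1), the contractible component is immediate: $I(0,\theta)=0$ and the lift $\widetilde{q}$ is closed in $\widetilde{M}$, so $S_{L+k}(q,T)=\mathbb{S}_{\widetilde{L}+\theta+k}(\widetilde{q},T)$, which equals $\mathbb{A}_{\widetilde{L}+\theta+k}(\gamma)>0$ by the above. Sending constant loops to $T\downarrow 0$ realises $0$ as the infimum. For the critical-point bound, a critical point projects to a genuine closed orbit of energy $k>c$, whose period is bounded below on $\Sigma_{k}$, so the identity $\mathbb{A}_{\widetilde{L}+\theta+k}=\mathbb{A}_{\widetilde{L}+\theta+c}+(k-c)T$ gives a uniform positive lower bound. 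For non-trivial $\alpha$, I would close the lift $\widetilde{q}$ up in $\widetilde{M}$ using $\widetilde{q}_{\alpha}^{-}$: the resulting closed curve has non-negative action, so subtracting the bounded reference contribution (recall $\Vert\theta\Vert_{\infty}<\infty$) yields a uniform lower bound on $S_{L+k}$ depending only on $\alpha$.

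Parts (2) and (3) rest on the Palais--Smale condition for $S_{L+k}$ proved in \cite{Merry2010}, together with the superlinear bounds \eqref{eq:superlinear-1}. The combination gives the standard dichotomy: a $-\nabla S_{L+k}$-trajectory along which $S_{L+k}$ stays bounded and $T$ stays in a compact subset of $\mathbb{R}^{+}$ has relatively compact closure and so extends. For non-trivial $\alpha$, a systolic lower bound forbids $T\to 0$ on sublevel sets and Lemma~\ref{lem:d/deta} forbids $T\to+\infty$, giving $\tau_{\pm}=\pm\infty$. For contractible $(q,T)$ with $\tau_{+}<\infty$, action monotonicity bounds $S_{L+k}$ on the forward flow, so the dichotomy forces $T_{\tau}$ to escape every compact set; Lemma~\ref{lem:d/deta} excludes $T_{\tau}\to+\infty$, leaving $T_{\tau}\to 0$. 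The superlinearity bound \eqref{eq:superlinear-1} then forces $\int|\dot{q}_{\tau}|^{2}dt\to 0$, so $q_{\tau}$ converges to a constant and $S_{L+k}(q_{\tau},T_{\tau})\to 0$. The condition $S_{L+k}(q,T)<\inf_{\mathrm{Crit}(S_{L+k};0)}S_{L+k}$ excludes accumulation at a critical point, so finite-time escape via $T\to 0$ is the only option.

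Part (4) is a trapping argument modelled on \cite[Proposition 11.1, Proposition 11.2]{AbbondandoloSchwarz2009}. I would pick $h_{1}$ in terms of the $h_{0}$ of Lemma~\ref{lem:d/deta} so that on $\mathcal{A}(S)$ the quotient $S_{L+k}/T$ exceeds $h_{0}$, placing $\mathcal{A}(S)$ inside the region where $\partial_{T}S_{L+k}<0$. This immediately yields $\mathcal{A}(S)\cap\mathrm{Crit}(S_{L+k})=\emptyset$ (critical points require $\partial_{T}S_{L+k}=0$), while the strict sign of $\partial_{T}S_{L+k}$ together with action monotonicity gives the invariance claim by checking $\langle-\nabla S_{L+k},\nu\rangle$ on each of the two boundary faces $\{S_{L+k}=S\}$ and $\{T=h_{1}S\}$. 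For the last clause, once one knows by (2)--(3) how a flow-line of finite existence must terminate (escape in $T$ accompanied by the appropriate monotonicity of $S_{L+k}$), an intermediate-value argument forces the trajectory to cross $\mathcal{A}(S)$ at some time of the asserted sign. The principal obstacle will be the simultaneous sign-tracking along the two faces of $\mathcal{A}(S)$, which requires the precise calibration of $h_{1}$ against $h_{0}$; everything else is a direct transcription of the exact case.
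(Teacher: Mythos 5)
Your overall strategy — reduce everything to the universal cover via \eqref{eq:relating to uni cover} and the Lagrangian characterisation of $c$, then import the arguments of \cite[Section 11]{AbbondandoloSchwarz2009} — does match the paper, which for (1)--(3) mostly cites \cite{Merry2010} and for (4) cites \cite[Proposition 11.2]{AbbondandoloSchwarz2009}. The one place where your reconstruction genuinely breaks down is the lower bound in part (1) on non-contractible components.

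The Lagrangian characterisation of $c$ only gives $\mathbb{A}_{\widetilde{L}+\theta+k}(\gamma)\geq 0$ for \emph{closed} curves $\gamma$ in $\widetilde{M}$. If $q\in\Lambda_{\alpha}M$ with $\alpha\neq 0$, the lift $\widetilde{q}$ runs from some $p$ to $h\cdot p$ for a deck transformation $h$ representing $\alpha$; it is an open path, and $\widetilde{q}_{\alpha}^{-}$ is another open path with an unrelated pair of endpoints. There is no canonical concatenation, and however you insert connecting paths the extra action contribution is not uniformly controlled — the endpoints can be arbitrarily far apart in $\widetilde{M}$ as $q$ varies in $\Lambda_\alpha M$. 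The argument that actually works is both more direct and stronger: since $k>c(g,\sigma,U)$ one can fix a primitive $\theta$ with $\sup_{q}\widetilde{H}(q,\theta_{q})<k$, and then for every $(q,v)\in T\widetilde{M}$ one has
\[
(\widetilde{L}+\theta)(q,v)+k=\tfrac{1}{2}\left|v\right|^{2}+\theta_{q}(v)-\widetilde{U}(q)+k\geq k-\widetilde{H}(q,\theta_{q})>0,
\]
the middle expression being a quadratic in $\left|v\right|$ whose minimum equals $k-\widetilde{H}(q,\theta_q)$. Thus $\mathbb{S}_{\widetilde{L}+\theta+k}(\widetilde{q},T)>0$ for \emph{every} path, closed or not, so $S_{L+k}(q,T)\geq I(\alpha,\theta)$ directly from \eqref{eq:relating to uni cover}. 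This pointwise positivity, rather than the action inequality on closed curves, is what makes (1) go through for $\alpha\neq 0$, and it is the argument behind \cite[Lemma 4.2]{Merry2010}.

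For part (4) your sketch reads the set $\{T<h_{1}S\}$ so that the quotient $S_{L+k}/T$ is bounded below on $\mathcal{A}(S)$; as literally written in the statement (with $S$ the constant sublevel parameter) the constraints $S_{L+k}<S$ and $T<h_{1}S$ are both upper bounds and give no such control — the a priori bound $S_{L+k}/T\geq k-c$ is strictly smaller than the $h_{0}$ of Lemma \ref{lem:d/deta}, which is of order $2k$. The workable trap in \cite[Prop.\ 11.2]{AbbondandoloSchwarz2009} really does tie $T$ to the \emph{value} of $S_{L+k}$, and it is the sign of $\partial_{T}S_{L+k}$ on the face $\{T=h_{1}S_{L+k}\}$, together with the lower bound $S_{L+k}(q,T)\geq\kappa T$ that holds at critical points because $k>c$, that makes the argument close; you should make that dependence explicit rather than relying on Lemma \ref{lem:d/deta} alone. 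Parts (2) and (3) are in the right spirit and match the paper's appeal to \cite{Merry2010} and the Mawhin--Willem dichotomy.
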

\begin{proof}
The fact that $S_{L+k}$ is bounded below is proved%
\footnote{Strictly speaking, all the proofs in \cite{Merry2010} are given only
in the special case $U\equiv0$, but there are no changes in this
case.%
} in \cite[Lemma 4.2]{Merry2010}. The fact that $S_{L+k}$ is strictly
positive on $\Lambda_{0}M\times\mathbb{R}^{+}$ follows from the fact
that given $(q,T)\in\Lambda_{0}M\times\mathbb{R}^{+}$ we have\[
S_{L+k}(q,T)=\mathbb{S}_{\widetilde{L}+\theta+k}(\widetilde{q},T)=\mathbb{S}_{\widetilde{L}+\theta+c}(\widetilde{q},T)+(k-c)T\geq0+(k-c)T.
\]
If $q$ is a constant loop then $\lim_{T\rightarrow0}S_{L+k}(q,T)=0$,
and hence the infimum of $S_{L+k}$ on $\Lambda_{0}M\times\mathbb{R}^{+}$
is zero. To see that the infimum of $S_{L+k}$ on $\mbox{Crit}(S_{L+k};0)$
is strictly positive, we use Lemma \ref{lem:relating critical points lemma},
to be proved in the next section, which says that $(q,T)\in\mbox{Crit}(S_{L+k})$
if and only if $(x,T)\in\mbox{Crit}(A_{H-k})$, where $x=(q,\dot{q})\in\Lambda T^{*}M$.
Since $\Sigma_{k}:=H^{-1}(k)$ is compact and $k$ is a regular value
of $H$, the period of its Hamiltonian orbits is bounded away from
zero, and thus \[
\inf\left\{ \eta>0\,:\,(x,\eta)\in\mbox{Crit}(A_{H-k})\right\} >0.
\]
Thus the infimum of $S_{L+k}$ on $\mbox{Crit}(S_{L+k};0)$ is strictly
positive. This proves (1).

Statement (2) is proved in \cite[Theorem 3.2, Lemma 4.4]{Merry2010}.
Since $S_{L+k}$ is bounded below, if $(q,T)\in\Lambda M\times\mathbb{R}^{+}$
is such that $\tau_{+}(q,T)<\infty$ then if $(q_{\tau},T_{\tau}):=\Psi_{\tau}(q,T)$,
we must have $\lim_{\tau\uparrow\tau_{+}(q,T)}T_{\tau}=0$ (see for
instance \cite[Proposition 8.4]{MawhimWillem1989}). This can only
happen if $(q,T)\in\Lambda_{0}M\times\mathbb{R}^{+}$, since if $q$
is non-contractible then $T$ is bounded away from zero (\cite[Lemma 4.3]{Merry2010}).
If $(q,T)\in\Lambda_{0}M\times\mathbb{R}^{+}$ then we have \begin{align*}
\frac{\partial T_{\tau}}{\partial\tau} & =\left\langle \left\langle \frac{\partial}{\partial\tau}\Psi_{\tau}(q,T),\left(0,\frac{\partial}{\partial T}\right)\right\rangle \right\rangle _{g}\\
 & =-\frac{\partial S_{L+k}}{\partial T}(q_{\tau},T_{\tau}),
\end{align*}
and thus Lemma \ref{lem:d/deta} tells us that if $S_{L+k}(q_{\tau},T_{\tau})>h_{0}T_{\tau}$
then $\frac{\partial T_{\tau}}{\partial\tau}>0$. Thus the decreasing
function $\tau\mapsto S_{L+k}(q_{\tau},T_{\tau})$ must converge to
zero. Using \eqref{eq:superlinear-1} it is easy to see that the fact
that both $S_{L+k}(q_{\tau},T_{\tau})$ and $T_{\tau}$ tend to zero
implies that $\int_{S^{1}}\left|\dot{q}_{\tau}(t)\right|^{2}dt$ also
tends to zero as $\tau\uparrow\tau_{+}(q,T)$. This proves (3). The
proof of (4) follows in exactly the same way (see \cite[Proposition 11.2]{AbbondandoloSchwarz2009}).
\end{proof}

\subsection{Fixing the period}

$\ $\vspace{6 pt}

It will be useful to consider the \textbf{fixed period action functional}.
Given $T\in\mathbb{R}^{+}$ let us denote by $S_{L+k}^{T}:\Lambda M\rightarrow\mathbb{R}$
the functional defined by \[
S_{L+k}^{T}(q):=S_{L+k}(q,T).
\]
Note that \[
d_{q}S_{L+k}^{T}(\zeta)=d_{(q,T)}S_{L+k}(\zeta,0).
\]
Thus if $(q,T)\in\mbox{Crit}(S_{L+k})$ then $q\in\mbox{Crit}(S_{L+k}^{T})$.

\subsection{The Morse index and the non-degeneracy assumption}

$\ $\vspace{6 pt}

By definition, the \textbf{Morse index}\emph{ }$i(q,T)$ of a critical
point $(q,T)\in\mbox{Crit}(S_{L+k})$ is the maximal dimension of
a subspace $W\subseteq W^{1,2}(S^{1},q^{*}TM)\times\mathbb{R}$ on
which the Hessian $\nabla_{(q,T)}^{2}S_{L+k}$ of of $S_{L+k}$ at
$(q,T)$ is negative definite. It is well known that for the Lagrangians
$L=L_{g}-\pi^{*}U$ that we work with the Morse index $i(q,T)$ is
always finite \cite[Section 1]{Duistermaat1976}. Similarly let $i_{T}(q)$
denote the Morse index\textbf{ }of a critical point $q\in\mbox{Crit}(S_{L+k}^{T})$,
that is, the dimension of a maximal subspace of $W^{1,2}(S^{1},q^{*}TM)$
on which the Hessian $\nabla_{q}^{2}S_{L+k}^{T}$ of the $S_{L+k}$
at $(q,T)$ (this time taken with respect to the $W^{1,2}$ metric
on $\Lambda M$) is negative definite. 
\begin{defn}
Let us say that a critical point $(q,T)\in\mbox{\emph{Crit}}(S_{L+k})$
is \textbf{\emph{non-degenerate }}if the kernel of $\nabla_{q}^{2}S_{L+k}^{T}$
is one-dimensional, spanned by the vector $\dot{q}\in T_{q}\Lambda M$. 
\end{defn}
Suppose $(q,T)$ is a non-degenerate critical point. One consequence
of this assumption (cf. the discussion at the start of Section \ref{sub:Grading-the-Rabinowitz})
is the existence of an \textbf{orbit cylinder }about $(q,T)$. That
is, there exists $\varepsilon>0$ and a unique smooth (in $s$) family
$(q_{s},T_{s})\in\mbox{Crit}(S_{L+k+s})$ for $s\in(-\varepsilon,\varepsilon)$,
where $(q_{0},T_{0})=(q,T)$. Moreover $\frac{\partial T_{s}}{\partial s}(0)\ne0$.
Given such a non-degenerate critical point $(q,T)$, we may therefore
define \begin{equation}
\chi(q,T):=\mbox{sign}\left(-\frac{\partial T_{s}}{\partial s}(0)\right)\in\{-1,1\}.\label{eq:chi q T}
\end{equation}

Recall that a function $S:\mathcal{M}\rightarrow\mathbb{R}$ on on
a Hilbert manifold $\mathcal{M}$ equipped with a Riemannian metric
$G$ is called \textbf{Morse-Bott}\emph{ }if the set $\mbox{Crit}(S)$
of its critical points is a submanifold of $\mathcal{M}$ (possibly
with components of differing dimensions) and such that for each $x\in\mbox{Crit}(S)$,
the Hessian $\nabla_{x}^{2}S$ of $S$ (defined with respect to $G$)
is a Fredholm operator and satisfies \[
\ker\,\nabla_{x}^{2}S=T_{x}\mbox{Crit}(S).
\]

Denote by \[
\mathcal{O}_{\textrm{reg}}\subseteq\mathcal{O}
\]
the set of quadruples $(g,\sigma,U,k)\in\mathcal{O}$ with the property
that if $L:=L_{g}-\pi^{*}U$ then every critical point of $S_{L+k}$
is non-degenerate. In this case $S_{L+k}$ is a Morse-Bott function,
and $\mbox{Crit}(S_{L+k})$ consists of a discrete union of circles.
The following theorem can be proved by adapting the proofs of \cite[Theorem B1]{CieliebakFrauenfelder2009}
(see also the Corrigendum \cite{CieliebakFrauenfelder2010}) together
with a version of the Klingenberg-Takens theorem \cite{KlingenbergTakens1972}
for magnetic flows. Full details can be found in \cite{Merry2011}.
\begin{thm}
\label{thm:generic}Suppose $(g,\sigma,U,k)\in\mathcal{O}$. Then
given any $\varepsilon>0$ there exists $g'\in\mathcal{R}(M)$ with
$\left\Vert g-g'\right\Vert _{\infty}<\varepsilon$ such that $(g',\sigma,U,k)\in\mathcal{O}_{\textrm{\emph{reg}}}$. 
\end{thm}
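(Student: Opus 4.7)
The plan is to identify non-degeneracy of a critical point $(q,T) \in \mbox{Crit}(S_{L+k})$ with non-degeneracy of the corresponding closed orbit of the magnetic Hamiltonian flow on $\Sigma_{k}$, and then to apply a Baire category argument on the space of metrics using a magnetic version of the Klingenberg--Takens theorem.

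First I would observe that since $c(g,\sigma,U)$ depends upper semi-continuously on $g$ (which is immediate from the definition \eqref{eq:manc}, as the infimum-of-suprema of a $g$-continuous family is upper semi-continuous), the set $\mathcal{O}$ is open in $\mathcal{R}(M) \times \Omega^{2}_{\textrm{we}}(M) \times C^{\infty}(M,\mathbb{R}) \times \mathbb{R}$. Hence any metric $g'$ sufficiently $C^{\infty}$-close to $g$ automatically satisfies $(g',\sigma,U,k) \in \mathcal{O}$, so we are free to perturb $g$ within a small $C^{\infty}$-neighbourhood of $g$.

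Next, I would use the Lagrangian--Hamiltonian correspondence foreshadowed in the introduction to identify critical points $(q,T) \in \mbox{Crit}(S_{L+k})$ bijectively with $T$-periodic orbits of $\phi_{t}^{H}$ on $\Sigma_{k}$ via $x(t) = (q(t),\dot{q}(t))$. Computing the fixed-period Hessian $\nabla_{q}^{2} S_{L+k}^{T}$ and rewriting its nullity as a second-order Jacobi-type equation along $x$ identifies its kernel with the $T$-periodic solutions of the linearised flow equation along $x$ lying in $T\Sigma_{k}$. Hence $(q,T)$ is non-degenerate in the sense of the Definition above if and only if the linearised Poincar\'{e} map of $x$ on a transverse section to $X_{H}$ inside $\Sigma_{k}$ has no fixed vector---equivalently, $x$ is a non-degenerate closed orbit in the usual Hamiltonian-dynamical sense. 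The existence of the orbit cylinder and well-posedness of $\chi(q,T)$ in \eqref{eq:chi q T} are then immediate consequences of the implicit function theorem applied to the parametrised fixed-period equation, restricted to a subspace complementary to $\mathbb{R}\dot{q}$.

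The remaining step is a standard Baire category / Sard--Smale argument: for each $T_{0} > 0$, the set of metrics $g'$ close to $g$ for which every closed orbit of the magnetic flow on $\Sigma_{k}$ of period at most $T_{0}$ is non-degenerate should be residual, and intersecting over $T_{0} \in \mathbb{N}$ gives the claim. This is a magnetic analogue of the Klingenberg--Takens theorem \cite{KlingenbergTakens1972}. The main obstacle---and the only genuinely technical point---is to produce, for a given closed orbit $x$ of the magnetic flow, a compactly supported perturbation of $g$ localised near a single point of the projected curve $q$ that preserves $x$ as an orbit of the new magnetic flow yet realises an arbitrary prescribed change of its linearised Poincar\'{e} map within $\mathrm{Sp}(2n-2)$. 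The key observation that makes this possible is that the Lorentz force $Y$ depends on $g$ only pointwise through the metric dual (cf.\ \eqref{eq:lorentz map}), so transverse jets of the metric perturbation enter the linearised flow along $x$ in exactly the same way as in the purely geodesic case; one may therefore copy the classical construction of Klingenberg and Takens with the magnetic correction treated as a lower-order term. The whole argument is carried out by adapting the proof of \cite[Theorem B1]{CieliebakFrauenfelder2009} (cf.\ the corrigendum \cite{CieliebakFrauenfelder2010}) to the magnetic setting, with full details provided in \cite{Merry2011}.
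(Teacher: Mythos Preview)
Your proposal is correct and follows essentially the same approach as the paper: the paper does not give a detailed proof but states that the result follows by adapting \cite[Theorem B1]{CieliebakFrauenfelder2009} (and its corrigendum \cite{CieliebakFrauenfelder2010}) together with a magnetic version of the Klingenberg--Takens theorem \cite{KlingenbergTakens1972}, with full details deferred to \cite{Merry2011}. Your added remarks on the openness of $\mathcal{O}$ and the identification of non-degeneracy of $(q,T)$ with non-degeneracy of the corresponding closed orbit are natural elaborations of exactly this strategy.
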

The following theorem is proved in \cite{MerryPaternain2010}.
\begin{thm}
\label{thm:lag index}Assume $(g,\sigma,U,k)\in\mathcal{O}_{\textrm{\emph{reg}}}$,
and set $L:=L_{g}-\pi^{*}U$. Let $(q,T)\in\mbox{\emph{Crit}}(S_{L+k})$.
Then \[
i(q,T)=i_{T}(q)+\frac{1}{2}-\frac{1}{2}\chi(q,T).
\]
\end{thm}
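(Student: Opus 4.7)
The plan is to compare the full Hessian of $S_{L+k}$ at $(q,T)$ with the Hessian of the fixed-period functional $S_{L+k}^T$ at $q$, using the $\mathbb{R}$-direction as a one-dimensional perturbation and exploiting the orbit cylinder through $(q,T)$ to evaluate the resulting correction. Writing the full Hessian on $T_q\Lambda M \oplus \mathbb{R}$ in block form
\[
\nabla^2_{(q,T)} S_{L+k} = \begin{pmatrix} \nabla^2_q S_{L+k}^T & v \\ v^{*} & a \end{pmatrix},
\]
with $v$ the mixed second derivative and $a = \partial_T^2 S_{L+k}(q,T)$, the problem reduces to understanding how the Morse index of this block matrix relates to the Morse index $i_T(q)$ of the top-left block.

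The key input is the orbit cylinder $(q_s,T_s)\in \mbox{Crit}(S_{L+k+s})$ guaranteed by non-degeneracy. Differentiating the identity $\nabla S_{L+k+s}(q_s,T_s)=0$ at $s=0$, and using that $\partial_k S_{L+k}(q,T) = T$ so that $\nabla\partial_k S_{L+k}(q,T) = (0,1)$, yields
\[
\nabla^2_q S_{L+k}^T\cdot \dot q_0 + \dot T_0\, v = 0,\qquad \langle v, \dot q_0\rangle + a\,\dot T_0 = -1,
\]
where $(\dot q_0,\dot T_0)=\partial_s(q_s,T_s)|_{s=0}$. The first equation forces $v$ to lie in the range of the symmetric Fredholm operator $\nabla^2_q S_{L+k}^T$, whose kernel is $\mathbb{R}\dot q$; hence $\langle v,\dot q\rangle=0$, i.e.\ $v\in V:=(\mathbb{R}\dot q)^{\perp}$.

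Split $T_q\Lambda M = \mathbb{R}\dot q\oplus V$. Since $\dot q$ is in the kernel of $\nabla^2_q S_{L+k}^T$ and is $\langle\cdot,\cdot\rangle$-orthogonal to $v$, the full Hessian is block-diagonal with the $\mathbb{R}\dot q$-factor contributing nothing, and it remains to analyse the $(V\oplus\mathbb{R})$-block, whose $V$-part is invertible with Morse index $i_T(q)$. By the standard Schur complement formula, the Morse index of this block equals $i_T(q)$ plus the Morse index (as a $1\times 1$ matrix) of
\[
s := a - \bigl\langle v,\, (\nabla^2_q S_{L+k}^T|_V)^{-1} v\bigr\rangle.
\]
Substituting $\dot q_0 = -\dot T_0 (\nabla^2_q S_{L+k}^T|_V)^{-1}v$ (modulo the kernel) into the second orbit cylinder equation gives the clean identity $s\cdot \dot T_0 = -1$, hence $\mbox{sign}(s) = \mbox{sign}(-\dot T_0) = \chi(q,T)$. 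Therefore the Morse index of $s$ equals $0$ when $\chi=+1$ and $1$ when $\chi=-1$, which is precisely $\tfrac{1}{2}-\tfrac{1}{2}\chi(q,T)$, giving the stated formula.

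The only subtle point is the Fredholm/self-adjoint structure of $\nabla^2_q S_{L+k}^T$ ensuring $\mbox{range}=(\ker)^{\perp}$ and the invertibility of its restriction to $V$; this is built into the Morse--Bott setup for $S_{L+k}$ coming from the non-degeneracy assumption $(g,\sigma,U,k)\in\mathcal O_{\textrm{reg}}$ and the general framework of Abbondandolo--Majer \cite{AbbondandoloMajer2006}, so the linear algebra above is the substance of the proof.
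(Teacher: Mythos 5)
The paper itself does not prove Theorem \ref{thm:lag index}; it cites the companion paper \cite{MerryPaternain2010}, so there is no in-paper argument to compare against. Your proof, however, is correct and self-contained, and it is worth recording why it works. The three ingredients you isolate are exactly the right ones: (i) the block decomposition of $\nabla^2_{(q,T)}S_{L+k}$ on $T_q\Lambda M\oplus\mathbb{R}$; (ii) differentiating the orbit-cylinder relation $\nabla S_{L+k+s}(q_s,T_s)=0$ at $s=0$, together with $\partial_k S_{L+k}(q,T)=T$, to obtain the two scalar/vector equations; and (iii) the Schur complement reduction. The observation that the first orbit-cylinder equation forces $v\in\operatorname{range}\nabla^2_qS_{L+k}^T=(\mathbb{R}\dot q)^\perp$ (using self-adjointness and $\ker=\mathbb{R}\dot q$) is what makes the $\mathbb{R}\dot q$-block split off cleanly, and the identity $s\cdot\dot T_0=-1$ is exactly what converts the Schur complement's sign into $\chi(q,T)=\operatorname{sign}(-\dot T_0)$. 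The sign check agrees with the stated formula in both cases: $\chi=+1$ gives $s>0$ and index contribution $0$, $\chi=-1$ gives $s<0$ and index contribution $1$, i.e.\ $\tfrac12-\tfrac12\chi(q,T)$. The one place where you could be slightly more careful is the implicit use of $\dot T_0\neq 0$, which is needed to deduce $v\in\operatorname{range}\nabla^2_qS_{L+k}^T$ and to write $s=-1/\dot T_0$; this is not an extra hypothesis but is precisely the content of the non-degeneracy assumption, as the paper explains in Subsection \ref{sub:Grading-the-Rabinowitz} (and indeed your own equations rule out $\dot T_0=0$: it would force $\dot q_0\in\ker\nabla^2_qS_{L+k}^T$, whence the second equation reads $0=-1$). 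With that spelled out, the argument is complete, and it is a genuinely more elementary route than the Maslov/Conley--Zehnder index bookkeeping one might expect, staying entirely on the Lagrangian side.
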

\begin{rem}
\label{rem:Index jumping on the morse side}In \cite[Section 10]{AbbondandoloSchwarz2009}
Abbondandolo and Schwarz work with a Lagrangian which is the Fenchel
transform of a Hamiltonian which is homogeneous of degree $2$ in
a neighborhood of $\Sigma_{k}$. In this case one can show $\chi(q,T)=+1$
for every critical point $(q,T)$, and hence the Morse index of the
free time action functional always agrees with the corresponding index
of the fixed period action functional. In the more general situation
that we are interested in here however it is possible that there exist
critical points $(q,T)$ with $\chi(q,T)=-1$. In \cite{MerryPaternain2010}
we provide an example of an exact magnetic Lagrangian $L:TS^{2}\rightarrow\mathbb{R}$
for which there exists a non-degenerate critical point $(q,T)$ of
$ $$S_{L+k}$ for $k>c$ such that $\chi(q,T)=-1$.
\end{rem}

\subsection{The Morse (co)chain complex}

$\ $\vspace{6 pt}

In this section we construct the Morse co(chain) complex and state
the Morse homology theorem, which says that the corresponding Morse
(co)homology coincides with the singular (co)homology of the free
loop space $\Lambda M$. Fix $(g,\sigma,U,k)\in\mathcal{O}_{\textrm{reg}}$,
and put $L:=L_{g}-\pi^{*}U$.

It will be convenient to put \[
\overline{\mbox{Crit}}(S_{L+k}):=\mbox{Crit}(S_{L+k})\cup(M\times\{0\}),
\]
where points in $M$ should be thought of as constant loops in $\Lambda M$.
We refer to elements of the set $\overline{\mbox{Crit}}(S_{L+k})\backslash\mbox{Crit}(S_{L+k})$
as \textbf{critical points at infinity}%
\footnote{In a lot of ways this is a poor choice of name, as these critical
points lie at $ $$T=0$, not at $T=\infty$!%
}.\emph{ }

We will need three pieces of auxiliary data to define the Morse (co)complex.
Firstly, let $G$ denote a metric on $\Lambda M\times\mathbb{R}^{+}$
that is a generic perturbation of the metric $\left\langle \left\langle \cdot,\cdot\right\rangle \right\rangle _{g}$
(in particular $G$ should be uniformly equivalent to $\left\langle \left\langle \cdot,\cdot\right\rangle \right\rangle _{g}$).
Write $\Psi_{\tau}$ for the flow of $-\nabla S_{L+k}$, now taken
with respect to the metric $G$. Secondly, let $f:\overline{\mbox{Crit}}(S_{L+k})\rightarrow\mathbb{R}$
denote a Morse function on $\overline{\mbox{Crit}}(S_{L+k})$, and
write $\overline{\mbox{Crit}}(f)\subseteq\overline{\mbox{Crit}}(S_{L+k})$
for the set of critical points of $f$, and $\mbox{Crit}(f):=\mbox{Crit}(S_{L+k})\cap\overline{\mbox{Crit}}(f)$.
Thirdly, let $g_{0}$ denote a Riemannian metric on $\overline{\mbox{Crit}}(S_{L+k})$
such that the flow $\phi_{t}^{-\nabla f}$ of $-\nabla f$ is Morse-Smale.
The Morse-Smale assumption implies that for every pair $w_{-},w_{+}$
of critical points of $f$ the unstable manifold $W^{u}(w_{-};-\nabla f)$
intersects the stable manifold $W^{s}(w_{+};-\nabla f)$ transversely.
Denote by $i_{f}(z):=\dim\, W^{u}(w;-\nabla f)$ the Morse index of
a critical point $z\in\overline{\mbox{Crit}}(f)$.

Finally for $w\in\overline{\mbox{Crit}}(f)$ write \[
\widehat{i}_{f}(w):=i(w)+i_{f}(w),
\]
where by definition we put $i(w)=0$ for $w\in\overline{\mbox{Crit}}(S_{L+k})\backslash\mbox{Crit}(S_{L+k})$.
Let\[
\overline{\mbox{Crit}}_{i}(f):=\left\{ w\in\overline{\mbox{Crit}}(f)\,:\,\widehat{i}_{f}(w)=i\right\} .
\]
Given $w_{-},w_{+}\in\overline{\mbox{Crit}}(f)$, denote by \[
\widetilde{\mathcal{W}}_{0}(w_{-},w_{+}):=W^{u}(w_{-};-\nabla f)\cap W^{s}(w_{+};-\nabla f).
\]
Let \[
\mathcal{W}_{0}(w_{-},w_{+}):=\widetilde{\mathcal{W}}_{0}(w_{-},w_{+})/\mathbb{R}
\]
 denote the quotient of $\widetilde{\mathcal{W}}_{0}(w_{-},w_{+})$
by the obvious free $\mathbb{R}$-action (if $w_{-}=w_{+}$, $\mathcal{W}_{0}(w_{-},w_{+})=\emptyset$).

Suppose now that $w_{-}\in\mbox{Crit}(f)$, that is, $w_{-}$ is \textbf{not}\emph{
}a critical point at infinity. If $m\in\mathbb{N}$ and $w_{+}\in\overline{\mbox{Crit}}(f)$,
let $\widetilde{\mathcal{W}}_{m}(w_{-},w_{+})$ denote the set of
tuples $\boldsymbol{w}=(w_{1},\dots,w_{m})$ where each $w_{i}\in(\Lambda M\times\mathbb{R}^{+})\backslash\mbox{Crit}(S_{L+k})$
is such that \[
\Psi_{-\infty}(w_{1})\in W^{u}(w_{-};-\nabla f),\dots,\Psi_{\infty}(w_{m})\in W^{s}(w_{+};-\nabla f),
\]
and such that \[
\Psi_{-\infty}(w_{i+1})\in\phi_{\mathbb{R}^{+}}^{-\nabla f}(\Psi_{\infty}(w_{i})).
\]
Note that if $m\geq1$ then $\widetilde{\mathcal{W}}_{m}(w^{-},w^{+})$
admits a free action of $\mathbb{R}^{m}$ via\[
(w_{1},\dots,w_{m})\mapsto(\Psi_{s_{1}}(w_{1}),\dots,\Psi_{s_{m}}(w_{m})),\ \ \ (s_{1},\dots,s_{m})\in\mathbb{R}^{m}.
\]
We denote by $\mathcal{W}_{m}(w_{-},w_{+})$ the quotient of $\widetilde{\mathcal{W}}_{m}(w_{-},w_{+})$
by this action. Put \[
\mathcal{W}(w_{-},w_{+}):=\bigcup_{m\in\mathbb{N}\cup\{0\}}\mathcal{W}_{m}(w_{-},w_{+}).
\]

Finally if $w_{-}\in\overline{\mbox{Crit}}(f)\backslash\mbox{Crit}(f)$
is a critical point at infinity, set \[
\widetilde{\mathcal{W}}_{m}(w_{-},w_{+})=\mathcal{W}_{m}(w_{-},w_{+}):=\emptyset
\]
for all $m\in\mathbb{N}$ and $w_{+}\in\overline{\mbox{Crit}}(f)$,
so that $\mathcal{W}(w_{-},w_{+})=\mathcal{W}_{0}(w_{-},w_{+})$.\newline

The next theorem, together with Theorem \ref{thm:morse homology}
below, follows from Theorem \ref{thm:(Properties-of-)-1} exactly
as in \cite[Section 11]{AbbondandoloSchwarz2009}. See also \cite[Appendix A]{Frauenfelder2004}
for more information.
\begin{thm}
\label{thm:generic morse}For a generic choice of $G$ and $g_{0}$
the set $\mathcal{W}(w_{-},w_{+})$ is a finite dimensional smooth
manifold of dimension \[
\dim\,\mathcal{W}(w_{-},w_{+})=\widehat{i}_{f}(w_{-})-\widehat{i}_{f}(w_{+})-1.
\]
Moreover if $\widehat{i}_{f}(w_{-})-\widehat{i}_{f}(w_{+})=1$ then
$\mathcal{W}(w_{-},w_{+})$ is compact, and hence a finite set. 
\end{thm}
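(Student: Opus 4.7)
The plan is to follow Frauenfelder's cascade formulation of Morse-Bott homology \cite[Appendix A]{Frauenfelder2004}, set up in the infinite-dimensional context via the machinery of Abbondandolo and Majer \cite{AbbondandoloMajer2006}, exactly as in \cite[Sections 11--12]{AbbondandoloSchwarz2009}. All the inputs specific to our (non-exact magnetic) setting are packaged in Theorem \ref{thm:(Properties-of-)-1}: $S_{L+k}$ is bounded below and satisfies the Palais-Smale condition on sublevel sets (via \cite{Merry2010}); negative gradient flow lines of $-\nabla S_{L+k}$ in a non-trivial free homotopy class exist for all $\tau\in\mathbb{R}$; and flow lines in $\Lambda_{0}M\times\mathbb{R}^{+}$ either exist for all $\tau$ or degenerate as $T\to 0$ to a constant loop of vanishing action, which is precisely the mechanism the critical points at infinity in $\overline{\mbox{Crit}}(S_{L+k})$ are designed to encode.

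First I would fix the local picture. The Morse-Bott assumption built into $\mathcal{O}_{\textrm{reg}}$ guarantees that each connected component $C \subseteq \mbox{Crit}(S_{L+k})$ is a circle on which $\nabla^{2}S_{L+k}$ is Fredholm with kernel $TC$, so the Abbondandolo-Majer framework produces smooth local stable and unstable manifolds $W^{s}(C), W^{u}(C) \subseteq \Lambda M \times \mathbb{R}^{+}$ of codimension $i(w)$ and dimension $i(w)+1$ respectively (for any $w \in C$). For a critical point at infinity $(q,0) \in M \subseteq \overline{\mbox{Crit}}(S_{L+k})$ one takes $W^{u}$ to be the point itself and $W^{s}$ to be the open basin of attraction furnished by parts (2) and (4) of Theorem \ref{thm:(Properties-of-)-1}. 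A standard transversality argument, perturbing $G$ inside the class of metrics uniformly equivalent to $\langle\langle\cdot,\cdot\rangle\rangle_{g}$, makes all intersections $W^{u}(C_{i}) \cap W^{s}(C_{j})$ transverse; combined with the Morse-Smale property of $-\nabla f$ on $(\overline{\mbox{Crit}}(S_{L+k}),g_{0})$, this gives $\widetilde{\mathcal{W}}_{m}(w_{-},w_{+})$ the structure of a smooth manifold, and the standard cascade dimension count from \cite[Appendix A]{Frauenfelder2004} yields $\dim\,\mathcal{W}(w_-,w_+) = \widehat{i}_{f}(w_{-}) - \widehat{i}_{f}(w_{+}) - 1$ after quotienting by the $\mathbb{R}^{m}$-action.

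The hard part is compactness when $\widehat{i}_{f}(w_{-})-\widehat{i}_{f}(w_{+})=1$. Given a sequence $(\boldsymbol{w}_{n}) \subseteq \mathcal{W}(w_{-},w_{+})$, the total energy $\sum_{i}\int_{\mathbb{R}}\|\partial_{\tau}w_{i,n}\|_{G}^{2}d\tau$ is bounded by $S_{L+k}(w_{-})-S_{L+k}(w_{+})$, so the Palais-Smale condition prevents any cascade piece from running away in a uniform way. Three potential failure modes must be excluded. The first is breaking into a configuration with more cascades or extra intermediate critical points; this is ruled out by the dimension count, since every such configuration lies in a space of non-positive expected dimension. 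The second is blow-up of the period variable $T \to \infty$ along some cascade; this is prevented because the estimate $S_{L+k}(q,T) \geq (k-c)T$ on the contractible component (from the proof of Theorem \ref{thm:(Properties-of-)-1}(1)), together with the analogous estimate from \cite[Lemma 4.3]{Merry2010} on the non-contractible components, forces $T$ to remain bounded on any sublevel set of $S_{L+k}$. The third is degeneration $T \to 0$, which by Theorem \ref{thm:(Properties-of-)-1}(2) can occur only in $\Lambda_{0}M \times \mathbb{R}^{+}$ and only along a flow line whose action tends to $0$; the limit is then a cascade terminating at a critical point at infinity, which is already a legitimate element of $\overline{\mbox{Crit}}(S_{L+k})$ and hence a broken configuration, again excluded for generic dimensional reasons. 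Hence $\mathcal{W}(w_{-},w_{+})$ is compact, and being a zero-dimensional manifold it is finite.
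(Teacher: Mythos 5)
Your argument is correct and tracks exactly the route the paper indicates: the paper offers no self-contained proof of this theorem, deferring entirely to \cite[Section 11]{AbbondandoloSchwarz2009} and \cite[Appendix A]{Frauenfelder2004}, with Theorem \ref{thm:(Properties-of-)-1} supplying the setting-specific inputs (Palais-Smale, lower boundedness, the basin-of-attraction structure at $T=0$, and completeness of the flow on non-trivial free homotopy classes), which is precisely what you assemble. One small imprecision worth correcting: to rule out $T\to\infty$ on non-contractible components you cite \cite[Lemma 4.3]{Merry2010}, but that lemma is used in the paper only for the \emph{lower} bound on $T$; the linear estimate $S_{L+k}(q,T)\geq(k-c)T$ you quote requires the lift $\widetilde{q}$ to be a closed loop and so is available only on $\Lambda_0M\times\mathbb{R}^{+}$, and on the other components the upper bound on $T$ along action sublevel sets is part of the Palais-Smale statement of \cite[Theorem 3.2, Lemma 4.4]{Merry2010}, which you already invoke.
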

If $\widehat{i}_{f}(w_{-})-\widehat{i}_{f}(w_{+})=1$ we may therefore
define \[
n_{\textrm{Morse}}(w_{-},w_{+}):=\#\mathcal{W}(w_{-},w_{+}),\ \ \ \mbox{taken mod }2.
\]
Put \[
CM_{i}(S_{L+k},f):=\bigoplus_{w\in\overline{\textrm{Crit}}_{i}(f)}\mathbb{Z}_{2}w,\ \ \ CM^{i}(S_{L+k},f):=\prod_{w\in\overline{\textrm{Crit}}_{i}(f)}\mathbb{Z}_{2}w.
\]
Define \[
\partial^{\textrm{Morse}}=\partial^{\textrm{Morse}}(G,g_{0}):CM_{i}(S_{L+k},f)\rightarrow CM_{i-1}(S_{L+k},f)
\]
by \[
\partial^{\textrm{Morse}}w=\sum_{w'\in\overline{\textrm{Crit}}_{i-1}(f)}n_{\textrm{Morse}}(w,w')w'.
\]
Define \[
\delta^{\textrm{Morse}}=\delta^{\textrm{Morse}}(G,g_{0}):CM^{i}(S_{L+k},f)\rightarrow CM^{i+1}(S_{L+k},f)
\]
by \[
\delta^{\textrm{Morse}}w:=\sum_{w'\in\overline{\textrm{Crit}}_{i+1}(f)}n_{\textrm{Morse}}(w',w)w'.
\]

The next result is the \textbf{Morse homology theorem}. 
\begin{thm}
\label{thm:morse homology}Let $G$ and $g_{0}$ be as Theorem \ref{thm:generic morse}.
Then it holds that $\partial^{\textrm{\emph{Morse}}}\circ\partial^{\textrm{\emph{Morse}}}=0$
and also that $\delta^{\textrm{\emph{Morse}}}\circ\delta^{\textrm{\emph{Morse}}}=0$.
Thus $\{CM_{*}(S_{L+k},f),\partial^{\textrm{\emph{Morse}}}(G,g_{0})\}$
and $\{CM^{*}(S_{L+k},f),\delta^{\textrm{\emph{Morse}}}(G,g_{0})\}$
form a chain (respectively cochain) complex. The isomorphism class
of these complexes is independent of the choice of $f$, $G$ and
$g_{0}$. The associated (co)homology, known as the \textbf{\emph{Morse
(co)homology}}\emph{ }of $S_{L+k}$ is isomorphic to the singular
(co)homology of $\Lambda M\times\mathbb{R}^{+}$:\[
HM_{*}(S_{L+k})\cong H_{*}(\Lambda M\times\mathbb{R}^{+};\mathbb{Z}_{2}),\ \ \ HM^{*}(S_{L+k})\cong H^{*}(\Lambda M\times\mathbb{R}^{+};\mathbb{Z}_{2}).
\]

\end{thm}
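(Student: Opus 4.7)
The plan is to follow the Morse--Bott with cascades framework of Frauenfelder combined with the Abbondandolo--Majer infinite-dimensional Morse theory, exactly as in \cite[Section 11]{AbbondandoloSchwarz2009}. The essential inputs are already in place: Theorem \ref{thm:(Properties-of-)-1} provides the control of the negative gradient flow, and Theorem \ref{thm:generic morse} gives the manifold structure on the cascade moduli spaces. What remains is to verify $(\partial^{\textrm{Morse}})^2 = 0$ and $(\delta^{\textrm{Morse}})^2 = 0$, to prove independence of the auxiliary data, and to identify the resulting (co)homology.

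For the boundary identity, I would argue as usual: for $w_-, w_+ \in \overline{\textrm{Crit}}(f)$ with $\widehat{i}_f(w_-) - \widehat{i}_f(w_+) = 2$, the moduli space $\mathcal{W}(w_-, w_+)$ is a $1$-dimensional manifold by Theorem \ref{thm:generic morse}. Using the uniform PS estimates of \cite{Merry2010} and the trapping of flow lines away from $T = 0$ provided by items (2) and (4) of Theorem \ref{thm:(Properties-of-)-1} (which prevents sequences of cascades from losing compactness at critical points at infinity except along well-controlled configurations), one obtains a compactification of $\mathcal{W}(w_-, w_+)$ by broken cascade trajectories. A standard gluing argument (the Morse-Bott version with cascades, as in \cite[Appendix A]{Frauenfelder2004}) identifies the boundary of this compactification with $\bigsqcup_{w'} \mathcal{W}(w_-, w') \times \mathcal{W}(w', w_+)$ over intermediate $w' \in \overline{\textrm{Crit}}_{i-1}(f)$, yielding $(\partial^{\textrm{Morse}})^2 = 0$; the analogous statement for $\delta^{\textrm{Morse}}$ follows by reading the same argument backwards.

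Invariance of $HM_*(S_{L+k},f)$ under the choice of $(f, G, g_0)$ would be established by a continuation argument. Given two admissible triples, one connects them by a generic homotopy and considers the induced chain map counting cascade trajectories with respect to the interpolating data; a chain homotopy constructed from a generic homotopy between two such homotopies shows the induced maps on homology are canonical isomorphisms. Again the nontrivial part is compactness of the relevant $0$- and $1$-dimensional parameterized moduli spaces, which rests on the same trapping estimates of Theorem \ref{thm:(Properties-of-)-1}.

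The identification with the singular (co)homology of $\Lambda M \times \mathbb{R}^+$ is the heart of the matter. Here I would invoke the Abbondandolo--Majer machinery \cite{AbbondandoloMajer2006}, which produces a CW-type filtration of $\Lambda M \times \mathbb{R}^+$ by the sublevel sets $\{S_{L+k} < a\}$ whose relative (co)homology is generated by the (unstable manifolds of) critical points lying in the corresponding action window. The critical points at infinity $M \times \{0\}$ are handled by observing, via Theorem \ref{thm:(Properties-of-)-1}(2), that the flow lines entering $T = 0$ converge to constant loops in $M$, so that in the Morse-Bott cascade picture $M \times \{0\}$ contributes the homology of $M$ at the bottom of the filtration in exactly the way it would for the genuine sublevel sets of an ``extended'' functional on a deformation retract. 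Combining the Morse inequalities arising from this filtration with the fact that $\Lambda M \times \mathbb{R}^+$ is homotopy equivalent to $\Lambda M$ and that sublevel sets exhaust $\Lambda M \times \mathbb{R}^+$ as $a \to \infty$ yields the isomorphism. The main obstacle here is keeping track of how the critical-points-at-infinity glue into the filtration without contributing spurious homology; this is handled by the strict positivity of $S_{L+k}$ on $\Lambda_0 M \times \mathbb{R}^+$ and the separation estimate $\inf_{\textrm{Crit}(S_{L+k};0)} S_{L+k} > 0$ from Theorem \ref{thm:(Properties-of-)-1}(1), which ensures that the constant loops form an isolated critical set at action level $0$ and thus contribute $H_*(M)$ precisely at the bottom, assembling with the contributions from genuine critical points into $H_*(\Lambda M) \cong H_*(\Lambda M \times \mathbb{R}^+)$.
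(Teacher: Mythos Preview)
Your proposal is correct and follows precisely the approach the paper itself takes: the paper does not give an independent proof of this theorem but simply states that it, together with Theorem~\ref{thm:generic morse}, ``follows from Theorem~\ref{thm:(Properties-of-)-1} exactly as in \cite[Section 11]{AbbondandoloSchwarz2009}'' with a further reference to \cite[Appendix A]{Frauenfelder2004}. Your outline of the compactness/gluing argument for $(\partial^{\textrm{Morse}})^2=0$, the continuation argument for invariance, and the Abbondandolo--Majer identification with singular (co)homology is exactly the content of those references, and your use of the trapping estimates from Theorem~\ref{thm:(Properties-of-)-1} to handle the critical points at infinity is the right input.
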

Moreover this isomorphism respects the splitting $\Lambda M=\bigoplus_{\alpha\in[S^{1},M]}\Lambda_{\alpha}M$:
if $CM_{*}(S_{L+k},f;\alpha)$ denotes the subcomplex of $CM_{*}(S_{L+k},f)$
generated by the critical points $w\in\overline{\mbox{Crit}}(f)\cap\overline{\mbox{Crit}}(S_{L+k};\alpha)$
then the homology $HM_{*}(S_{L+k};\alpha)$ of this subcomplex is
isomorphic to $H_{*}(\Lambda_{\alpha}M\times\mathbb{R}^{+};\mathbb{Z}_{2})$
under the isomorphism of the previous theorem. The same statements
holds for cohomology: $HM^{*}(S_{L+k};\alpha)\cong H^{*}(\Lambda_{\alpha}M\times\mathbb{R}^{+})$.

\section{The Rabinowitz action functional}

In this section we finally define the Rabinowitz action functional,
and its associated Rabinowitz Floer homology.

\subsection{Definition of the Rabinowitz action functional}

$\ $\vspace{6 pt}

Fix an autonomous potential $U\in C^{\infty}(M,\mathbb{R})$, and
put $H=H_{g}+\pi^{*}U$. Fix a regular value $k\in\mathbb{R}$ of
$H$, and put $\Sigma_{k}:=H^{-1}(k)$. We define the \textbf{Rabinowitz
action functional}\emph{ }$A_{H-k}:\Lambda T^{*}M\times\mathbb{R}\rightarrow\mathbb{R}$
by \begin{align*}
A_{H-k}(x,\eta): & =\int_{C}\bar{x}^{*}\omega-\eta\int_{S^{1}}(H(x(t))-k)dt,\\
 & =\int_{S^{1}}x^{*}\lambda_{0}+\int_{C}\bar{x}^{*}\pi^{*}\sigma-\eta\int_{S^{1}}(H(x(t))-k)dt
\end{align*}
(see Section \ref{sub:The-crucial-observation} for the definition
of the term $\int_{C}\bar{x}^{*}\omega$; the latter equality follows
from \eqref{eq:liouville 1 form and c}). Denote by $\mbox{Crit}(A_{H-k})$
the set of critical points of $A_{H-k}$, and given $\alpha\in[S^{1},M]$,
let $\mbox{Crit}(A_{H-k};\alpha):=\mbox{Crit}(A_{H-k})\cap(\Lambda_{\alpha}T^{*}M\times\mathbb{R})$.
Given an interval $(a,b)\subseteq\mathbb{R}$, denote by $\mbox{Crit}^{(a,b)}(A_{H-k})$
the set $\mbox{Crit}(A_{H-k})\cap A_{H-k}^{-1}((a,b))$.

The critical points of $A_{H-k}$ are easily seen to satisfy:\[
\dot{x}=\eta X_{H}(x(t))\ \ \ \mbox{for all }t\in S^{1};
\]
\[
\int_{S^{1}}(H(x(t))-k)dt=0.
\]
Since $H$ is invariant under its Hamiltonian flow, the second equation
implies \[
H(x(t))-k=0\ \ \ \mbox{for all }t\in S^{1},
\]
that is, \[
x(S^{1})\subseteq\Sigma_{k}.
\]
Thus we can characterize $\mbox{Crit}(A_{H-k})$ by\begin{eqnarray*}
\mbox{Crit}(A_{H-k}) & = & \left\{ (x,\eta)\in\Lambda T^{*}M\times\mathbb{R}\,:\, x\in C^{\infty}(S^{1},T^{*}M)\right.\\
 &  & \left.\ \ \ \dot{x}(t)=\eta X_{H}(x(t)),\ x(S^{1})\subseteq\Sigma_{k}\right\} .
\end{eqnarray*}
The circle $S^{1}$ acts on $\Lambda T^{*}M$ via rotation:\[
r_{*}(x)(t):=x(r+t),\ \ \ r\in S^{1},\ x\in\Lambda T^{*}M.
\]
This action extends to an action on $\Lambda T^{*}M\times\mathbb{R}$
by ignoring the $\mathbb{R}$-factor. Since $H$ is autonomous, the
Rabinowitz action functional $A_{H-k}$ is invariant under this action.
In particular, its critical set $\mbox{Crit}(A_{H-k})$ is invariant. 

Thus the elements of $\mbox{Crit}(A_{H-k})$ come in two flavours.
Firstly, for each periodic orbit $y:\mathbb{R}/T\mathbb{Z}\rightarrow\Sigma_{k}$
of $X_{H}$ on $\Sigma_{k}$ with minimal period $T>0$, and for each
$m\in\mathbb{Z}\backslash\{0\}$, we have a copy of $S^{1}$: \[
\{(r_{*}(y)(mTt),mT)\,:\, r\in S^{1}\}
\]
contained in $\mbox{Crit}(A_{H-k})$. Secondly, $\mbox{Crit}(A_{H-k})$
contains the set $\{(x,0)\,:\, x\in\Sigma_{k}\}$, where a point in
$\Sigma_{k}$ should be interpreted as a constant loop in $\Lambda T^{*}M$.

Let us fix a $1$-periodic almost complex structure $J\in\mathcal{J}(\omega)$.
We denote by $\nabla A_{H-k}$ the $L^{2}$-gradient of $A_{H-k}$
with respect to the $L^{2}$-metric $\left\langle \left\langle \cdot,\cdot\right\rangle \right\rangle _{J}$:
\[
\nabla A_{H-k}(x,\eta)=\left(\begin{array}{c}
J(t,x)(\dot{x}-\eta X_{H}(x)\\
-\int_{S^{1}}(H(x(t))-k)dt
\end{array}\right).
\]

\subsection{Comparing the functionals $S_{L+k}$ and $A_{H-k}$}

$\ $\vspace{6 pt}

Let $H$ be as above and set $L:=L_{g}-\pi^{*}U$. The following lemma
outlines the relationship between the critical points of $S_{L+k}$
and $A_{H-k}$. The proof is identical to the analogous statements
in \cite[Section 5]{AbbondandoloSchwarz2009}, and will be omitted. 
\begin{lem}
\label{lem:relating critical points lemma}(Properties of $S_{L+k}$
and $A_{H-k}$)
\begin{enumerate}
\item Given $w=(q,T)\in\mbox{\emph{Crit}}(S_{L+k};\alpha)$, define \[
Z^{+}(w):=(x,T)\in\Lambda_{\alpha}T^{*}M\times\mathbb{R},\ \ \ \mbox{where }x(t):=\left(q(t),\dot{q}(t)\right),
\]
and define \[
Z^{-}(w):=(x^{-},-T)\in\Lambda_{-\alpha}T^{*}M\times\mathbb{R},\ \ \ \mbox{where }x^{-}(t):=x(-t).
\]
Then $Z^{+}(w)\in\mbox{\emph{Crit}}(A_{H-k};\alpha)$ and $Z^{-}(w)\in\mbox{\emph{Crit}}(A_{H-k};-\alpha)$,
and moreover the map\[
\mbox{\emph{Crit}}(S_{L+k})\times\{-1,1\}\rightarrow\left\{ (x,\eta)\in\mbox{\emph{Crit}}(A_{H-k})\,:\,\eta\ne0\right\} 
\]
given by \[
(w,\pm1)\mapsto Z^{\pm}(w)
\]
is a bijection, and \[
A_{H-k}(Z^{\pm}(w))=\pm S_{L+k}(w).
\]

\item Given any $(x,\eta)\in\Lambda T^{*}M\times\mathbb{R}$ with $\eta>0$,
if $q:=\pi\circ x$ then\begin{equation}
A_{H-k}(x,\eta)\leq S_{L+k}(q,\eta),\label{eq:bound plus}
\end{equation}
with equality if and only if $x=(q,\dot{q})$. If $x^{-}(t):=x(-t)$
then\begin{equation}
A_{H-k}(x^{-},-\eta)\geq-S_{L+k}(q,\eta)\label{eq:bound minus}
\end{equation}
with equality if and only if $x=(q,\dot{q})$. 
\item Let $w\in\mbox{\emph{Crit}}(S_{L+k})$. Then for all $(\xi,b)\in T_{Z^{+}(w)}(\Lambda T^{*}M\times\mathbb{R})$
it holds that \[
d_{Z^{+}(w)}^{2}A_{H-k}((\xi,b),(\xi,b))\leq d_{w}^{2}S_{L+k}((d\pi(\xi),b),(d\pi(\xi),b)),
\]
and similarly for all $(\xi,b)\in T_{Z^{-}(w)}(\Lambda T^{*}M\times\mathbb{R})$
it holds that \[
d_{Z^{-}(w)}^{2}A_{H-k}((\xi,b),(\xi,b))\geq-d_{w}^{2}S_{L+k}((d\pi(\xi)^{-},-b),(d\pi(\xi)^{-},-b)),
\]
where $d\pi(\xi)^{-}(t):=d\pi(\xi)(-t)$.
\item Given $w\in\mbox{\emph{Crit}}(S_{L+k})$, a pair $(\xi,b)$ lies in
the kernel of the Hessian of $A_{H-k}$ at $Z^{+}(w)$ if and only
if the pair $(d\pi(\xi),b)$ lies in the kernel of the Hessian of
$S_{L+k}$ at $w$, and similarly $(\xi,b)$ lies in the kernel of
the Hessian of $A_{H-k}$ at $Z^{-}(w)$ if and only if the pair $(d\pi(\xi)^{-},-b)$
lies in the kernel of the Hessian of $S_{L+k}$ at $w$.
\end{enumerate}
\end{lem}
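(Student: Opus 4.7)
The central tool throughout is the pointwise Fenchel--Young inequality between $L = L_g - \pi^*U$ and its Legendre dual $H = H_g + \pi^*U$,
\[
L(q,v) + H(q,p) \geq \langle p, v\rangle,
\]
with equality iff $p = v$ under the metric identification $T^*M \cong TM$ (for our mechanical $L,H$, one has $\partial_v L = v$ and $\partial_p H = p$). The plan is to prove (2) directly from this pointwise inequality and then to deduce (1), (3) and (4) by respectively specialising at critical points, first-varying, and second-varying.

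For (2), I would rewrite $A_{H-k}(x,\eta)$ via \eqref{eq:liouville 1 form and c} to isolate the Liouville integral $\int_{S^1}\langle p,\dot q\rangle\,dt$, subtract from the defining formula for $S_{L+k}(q,\eta)$, and observe that the $\int_C \bar q^*\sigma$- and $k\eta$-terms cancel. After factoring out $\eta>0$ one obtains
\[
S_{L+k}(q,\eta) - A_{H-k}(x,\eta) = \eta\int_{S^1}\bigl\{L(q,\dot q/\eta) + H(q,p) - \langle p, \dot q/\eta\rangle\bigr\}\,dt \geq 0,
\]
which is nonnegative by Fenchel--Young and vanishes precisely when the inequality saturates pointwise, yielding \eqref{eq:bound plus}. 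The companion bound \eqref{eq:bound minus} follows from \eqref{eq:bound plus} applied to the reversed loop $x^-(t):=x(-t)$ together with the reversal identity $A_{H-k}(x^-,-\eta) = -A_{H-k}(x,\eta)$, which is a change of variables that uses Lemma \ref{lem:key observation} to reflect the capping cylinder consistently.

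For (1), the first variation of $S_{L+k}$ in $q$ yields the magnetic Euler--Lagrange equation, which under the Legendre transform $p = v$ becomes the Hamiltonian equation $\dot x = T\,X_H(x)$ for $x := Z^+(w)$; the first variation in $T$ together with energy conservation along the Hamiltonian flow forces $H(x) \equiv k$, so $Z^+(w) \in \mbox{Crit}(A_{H-k})$. The action identity $A_{H-k}(Z^+(w)) = S_{L+k}(w)$ then drops out of Fenchel equality at the critical point. The $Z^-$ case follows from $t \mapsto -t$, which flips the Liouville and Hamiltonian contributions while leaving the $\sigma$-cap invariant (Lemma \ref{lem:key observation}). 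Conversely, any critical point $(x,\eta)$ of $A_{H-k}$ with $\eta \neq 0$ satisfies $\dot x = \eta X_H(x)$ on $\Sigma_k$, and its projection (reversed if $\eta<0$) is a critical point of $S_{L+k}$ of period $|\eta|$, producing the claimed bijection.

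For (3) and (4) the key idea is to consider the nonnegative auxiliary function
\[
\Phi(x,\eta) := S_{L+k}(\pi\circ x, \eta) - A_{H-k}(x,\eta)
\]
on $\Lambda T^*M \times \mathbb{R}^+$, which by (2) vanishes at $Z^+(w)$ for every $w \in \mbox{Crit}(S_{L+k})$. Its full second differential at $Z^+(w)$ is therefore positive semidefinite, and expanding in the horizontal/vertical splitting $TT^*M = T^h \oplus T^v$ gives the Hessian inequality (3) at $Z^+(w)$; the sign-flipped version at $Z^-(w)$ comes from the same reflection argument. For (4) I would exploit that $\Phi$ is \emph{strictly} convex in the vertical fibre direction --- the vertical Hessian $\partial_p^2 H = \mathbb{1}$ of the kinetic term is positive definite --- so any $(\xi,b) \in \ker d^2 A_{H-k}(Z^+(w))$ must have its vertical part uniquely determined by the horizontal projection $(d\pi(\xi), b)$, which in turn is forced into $\ker d^2 S_{L+k}(w)$; the converse inclusion is immediate from (3). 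The place where convention bookkeeping is most error-prone, and the main thing to check carefully, is the behaviour of the capping integral $\int_C \bar q^*\sigma$ under $t \mapsto -t$ needed for the reversal arguments --- which is precisely where symplectic atoroidality (Lemma \ref{lem:key observation}) earns its keep.
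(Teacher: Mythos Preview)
Your proposal is correct and follows precisely the approach the paper has in mind: the paper omits the proof entirely, referring to \cite[Section 5]{AbbondandoloSchwarz2009}, and the argument there is exactly the Fenchel--Young duality computation you outline --- the pointwise inequality gives (2), specialisation to the equality case gives (1), and the nonnegative auxiliary function $\Phi(x,\eta)=S_{L+k}(\pi\circ x,\eta)-A_{H-k}(x,\eta)$ attaining its minimum $0$ at $Z^+(w)$ yields the Hessian comparison (3) and, via strict fibrewise convexity of $H$, the kernel correspondence (4). Your bookkeeping of the $\sigma$-cap under time reversal via the convention $x_{-\alpha}(t)=x_\alpha(-t)$ and Lemma~\ref{lem:key observation} is also the right way to handle the only genuinely new wrinkle in the twisted setting.
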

As an immediate corollary of the preceding lemma and the definition
of $\mathcal{O}_{\textrm{reg}}$ (cf. Theorem \ref{thm:generic})
we obtain the following statement.
\begin{cor}
\label{pro:generic-1}If $(g,\sigma,U,k)\in\mathcal{O}_{\textrm{\emph{reg}}}$
and $H:=H_{g}+\pi^{*}U$ then every periodic orbit of $H$ lying in
$\Sigma_{k}$ is \textbf{\emph{strongly transversely non-degenerate}}.
In other words, if $y:\mathbb{R}/T\mathbb{Z}\rightarrow\Sigma_{k}$
is a periodic orbit of $X_{H}$ then the \textbf{\emph{nullity}}\emph{
}of $y$, $\nu(y)$ satisfies\[
\nu(y):=\dim\,\ker(d_{y(0)}\phi_{T}^{H}-\mathbb{1})=1.
\]
This implies that the Rabinowitz action functional $A_{H-k}$ is Morse-Bott,
and $\mbox{\emph{Crit}}(A_{H-k})$ consists of a copy of $\Sigma_{k}\times\{0\}$
together with a discrete union of circles.\end{cor}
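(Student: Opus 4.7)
The plan is to combine Lemma~\ref{lem:relating critical points lemma} with the regularity hypothesis on $(g,\sigma,U,k)$. Given a periodic orbit $y:\mathbb{R}/T\mathbb{Z}\to\Sigma_{k}$ of $X_{H}$ with $T>0$, set $q:=\pi\circ y$ (suitably reparametrized on $S^{1}$); by part (1) of the lemma, $(q,T)\in\mbox{Crit}(S_{L+k})$ and $Z^{+}(q,T)=(x,T)\in\mbox{Crit}(A_{H-k})$ with $x(t)=(q(t),\dot{q}(t))$. The assumption $(g,\sigma,U,k)\in\mathcal{O}_{\textrm{reg}}$ forces every such $(q,T)$ to be non-degenerate as a critical point of $S_{L+k}$, so in particular $\ker\nabla_{q}^{2}S_{L+k}^{T}=\mathbb{R}\dot{q}$.

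To extract the nullity of $y$, I first perform a direct computation of the kernel of the Hessian of $A_{H-k}$ at $(x,T)$: a pair $(\xi,b)$ lies in this kernel exactly when $\dot{\xi}=T\,DX_{H}(x)\xi+bX_{H}(x)$ and $\int_{S^{1}}DH(x(t))\xi(t)\,dt=0$. Solving the inhomogeneous linear ODE by variation of parameters and using the flow-invariance identities $d\phi_{Tt}^{H}\cdot X_{H}(y(0))=X_{H}(x(t))$ and $(d\phi_{Tt}^{H})^{*}dH=dH$, the periodicity and integral conditions reduce to a pair of finite-dimensional conditions on $T_{y(0)}T^{*}M$, exhibiting this kernel in terms of $\ker(d_{y(0)}\phi_{T}^{H}-\mathbb{1})$ together with the Lagrange multiplier direction $b$. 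I then invoke Lemma~\ref{lem:relating critical points lemma}(4), which identifies the kernel of the Hessian of $A_{H-k}$ at $Z^{+}(q,T)$ with the kernel of the Hessian of $S_{L+k}$ at $(q,T)$ via $(\xi,b)\mapsto(d\pi(\xi),b)$, and combine with the non-degeneracy provided by $\mathcal{O}_{\textrm{reg}}$ to conclude $\nu(y)=1$. An identical analysis on the $Z^{-}$ side handles the orientation-reversed branch.

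Given strong transverse non-degeneracy, the Morse--Bott structure of $A_{H-k}$ then follows in two steps. At non-constant critical points, $\nu(y)=1$ means periodic orbits on $\Sigma_{k}$ are isolated modulo reparametrization, so each non-constant critical point sits in an isolated $S^{1}$-orbit, giving the claimed discrete union of circles; the one-dimensional Hessian kernel computed above is then spanned by the infinitesimal generator of this $S^{1}$-action, as required for the Morse--Bott condition. At constant critical points $(x,0)$ with $x\in\Sigma_{k}$, I compute the Hessian directly: the $\xi$-block reduces to $\xi\mapsto J\dot{\xi}$ paired with the linear functional $-\int dH(x)\xi$ in the $b$-coupling, and regularity of $k$ as a value of $H$ (a consequence of $k>c\geq e_{0}$) ensures that the kernel of this block operator is exactly $T_{x}\Sigma_{k}\oplus 0$, matching the tangent space of the critical submanifold $\Sigma_{k}\times\{0\}$.

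The main obstacle, in my view, is the bookkeeping in the Hessian computation for $A_{H-k}$ at non-constant critical points: one must verify carefully that the $b$-direction does not contribute a spurious extra kernel element beyond the $S^{1}$-generator. This amounts to checking that $X_{H}(y(0))$ lies in the image of $d\phi_{T}^{H}-\mathbb{1}$ precisely when $\ker(d\phi_{T}^{H}-\mathbb{1})\subseteq T_{y(0)}\Sigma_{k}$, which is a symplectic Fredholm alternative, and to confirming that this situation is forced by the $\mathcal{O}_{\textrm{reg}}$ hypothesis via the correspondence in Lemma~\ref{lem:relating critical points lemma}(4). Once this step is in place, the remaining identifications---discreteness of the family of critical circles and the Morse--Bott verification on $\Sigma_{k}\times\{0\}$---are routine.
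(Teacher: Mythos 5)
Your treatment of the constant-orbit part ($\Sigma_{k}\times\{0\}$) is essentially the paper's: linearize, integrate the first equation over $S^{1}$ using periodicity of $\xi$ and $X_{H}(x)\neq 0$ to force $b=0$, then deduce $\xi$ constant and $\xi(0)\in T_{x}\Sigma_{k}$. That step is correct and routine.

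The non-constant part is where your route diverges from the paper, and it has a genuine gap that your ``main obstacle'' paragraph gestures at but does not actually locate. You want to extract $\nu(y)=1$ by computing $\ker d^{2}_{Z^{+}(w)}A_{H-k}$ and then chaining through Lemma~\ref{lem:relating critical points lemma}(4) and $\mathcal{O}_{\textrm{reg}}$. But Lemma~\ref{lem:relating critical points lemma}(4) identifies $\ker d^{2}_{Z^{+}(w)}A_{H-k}$ with the kernel of the \emph{free-time} Hessian $\nabla^{2}_{(q,T)}S_{L+k}$, whereas $\mathcal{O}_{\textrm{reg}}$ constrains the \emph{fixed-period} Hessian $\nabla^{2}_{q}S_{L+k}^{T}$; these are different operators, and the relationship between their kernels (a Schur-complement / $\partial_{s}T_{s}(0)\neq 0$ condition) is precisely what is at issue, not something you may assume. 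Worse, even granting that $\ker d^{2}_{Z^{+}(w)}A_{H-k}$ is one-dimensional, this does \emph{not} imply $\nu(y)=1$: if $\nu(y)=2$ with the second Floquet direction $v$ satisfying $dH(v)\neq 0$, one has $\ker(d\phi_{T}^{H}-\mathbb{1})\cap T_{y(0)}\Sigma_{k}=\mathbb{R}X_{H}$ and $X_{H}\notin\mathrm{im}(d\phi_{T}^{H}-\mathbb{1})$ (by your own symplectic Fredholm alternative), so the $A_{H-k}$-Hessian kernel is still one-dimensional. So the implication you need simply does not flow in the direction your chain requires.

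The correct (and shorter) argument is the one implicit in the paper's phrase ``immediate corollary'': by Legendre duality for Tonelli systems, $\dim\ker\nabla^{2}_{q}S_{L+k}^{T}=\dim\ker(d_{y(0)}\phi_{T}^{H}-\mathbb{1})=\nu(y)$, and $\mathcal{O}_{\textrm{reg}}$ says the left side is $1$. You actually write down the hypothesis $\ker\nabla^{2}_{q}S_{L+k}^{T}=\mathbb{R}\dot q$ at the start of your proof but then detour away from it. Once $\nu(y)=1$ is in hand, the orbit-cylinder discussion of Section~\ref{sub:Grading-the-Rabinowitz} gives $\partial_{s}T_{s}(0)\neq 0$, which in turn forces the free-time Hessian kernel (and hence, by Lemma~\ref{lem:relating critical points lemma}(4), the $A_{H-k}$-Hessian kernel at $Z^{\pm}(w)$) to be one-dimensional, yielding Morse--Bott-ness on the non-constant circles. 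In short: $\nu(y)=1$ should be the \emph{input} to the $A_{H-k}$-Hessian analysis, not the output of it.
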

\begin{proof}
It remains only to check that $A_{H-k}$ is Morse-Bott at the constant
orbits $(x,0)\in\Sigma_{k}\times\{0\}\subseteq\mbox{Crit}(A_{H-k})$.
A short computation tells us that $(\xi,b)$ lies in the kernel of
the Hessian of $A_{H-k}$ at $(x,0)$ if and only if \[
-\nabla_{t}\xi(t)+bX_{H}(x)=0;
\]
\[
\int_{S^{1}}d_{x}H(\xi(t))dt=0.
\]
Integrating the first equation and using the fact that $\xi$ is a
loop and $X_{H}(x)\ne0$ (as $k$ is a regular value of $H$ and $x\in\Sigma_{k}$),
we see that $b=0$. Thus $\xi(t)\equiv\xi(0)$ is constant, and the
second equation then says that $\xi(0)\in\ker\, d_{x}H=T_{x}\Sigma_{k}$.
Thus $A_{H-k}$ is Morse-Bott at the constant orbits.
\end{proof}

\subsection{\label{sub:Grading-the-Rabinowitz}Grading the Rabinowitz Floer complex}

$\ $\vspace{6 pt}

Fix $(g,\sigma,U,k)\in\mathcal{O}_{\textrm{reg}}$. Let $H=H_{g}+\pi^{*}U$.
Suppose $y:\mathbb{R}/T\mathbb{Z}\rightarrow\Sigma_{k}$ is a periodic
orbit of $X_{H}$. Our non-degeneracy assumption on $y$ implies that
there exists $\varepsilon>0$ together with a smooth (in $s$) family
$y_{s}:\mathbb{R}/T_{s}\mathbb{Z}\rightarrow T^{*}M$ for $s\in(-\varepsilon,\varepsilon)$
of $T_{s}$-periodic orbits of $X_{H}$ with $y_{0}=y$ and $H(y_{s})\equiv k+s$.
Such a family $(y_{s})$ is known as an \textbf{orbit cylinder }about
$y$, and the family $(y_{s})$ is unique. Actually the existence
of such an orbit cylinder requires only that $y$ has exactly two
\textbf{Floquet multipliers }equal to one (see for instance \cite[Proposition 4.2]{HoferZehnder1994}).
Our non-degeneracy assumption is strictly stronger than this: it implies
in addition that $\frac{\partial T_{s}}{\partial s}(0)\ne0$. Indeed,
let $N$ denote a hypersurface inside of $\Sigma_{k}$ which is transverse
to $y(\mathbb{R}/T\mathbb{Z})$ at the point $y(0)$, with $T_{y(0)}N$
equal to the symplectic orthogonal to the tangent space of the orbit
cylinder. Let $P_{y}:\mathcal{U}\rightarrow\mathcal{V}$ denote the
associated \textbf{Poincar\'e map}, where $\mathcal{U}$ and $\mathcal{V}$
are neighborhoods of $y(0)$. $P$ is a diffeomorphism that fixes
$y(0)$. Then there exists a unique symplectic splitting of $T_{y(0)}T^{*}M$
such that $d_{y(0)}\phi_{T}^{H}$ is given by\[
d_{y(0)}\phi_{T}^{H}=\left(\begin{array}{ccc}
1 & -\frac{\partial T_{s}}{\partial s}(0) & 0\\
0 & 1 & 0\\
0 & 0 & \begin{array}{ccc}
\\
 & d_{y(0)}P_{y}\\
\\
\end{array}
\end{array}\right).
\]
Here $\mathbb{1}-d_{y(0)}P_{z}$ is invertible. The assumption that
$\nu(y)=1$ therefore implies that $\frac{\partial T_{s}}{\partial s}(0)\ne0$.
Let us define \[
\chi(y):=\mbox{sign}\left(-\frac{\partial T_{s}}{\partial s}(0)\right).
\]
Now suppose $(x,\eta)\in\mbox{Crit}(A_{H-k})$ with $\eta>0$. Let
$y:\mathbb{R}/\eta\mathbb{Z}\rightarrow\Sigma_{k}$ be defined by
$y(t):=x(t/\eta)$. Define \[
\chi(x,\eta):=\chi(y).
\]
If $(x,\eta)\in\mbox{Crit}(A_{H-k})$ with $\eta<0$ define \[
\chi(x,\eta):=-\chi(x^{-},-\eta)
\]
where $x^{-}(t):=x(-t)$ (note that $(x^{-},-\eta)\in\mbox{Crit}(A_{H-k})$,
so this makes sense). 

Thus \[
\chi(q,T)=\chi(Z^{+}(q,T))=-\chi(Z^{-}(q,T)).
\]
for any $(q,T)\in\mbox{Crit}(S_{L+k})$, where $\chi(q,T)$ is defined
as in \eqref{eq:chi q T}.\newline 

We define a grading $\mu:\mbox{Crit}(A_{H-k})\rightarrow\mathbb{Z}$
on $\mbox{Crit}(A_{H-k})$ as follows.
\begin{defn}
Given $(x,\eta)\in\mbox{\emph{Crit}}(A_{H-k})$ with $\eta\ne0$ define
$y:\mathbb{R}/\left|\eta\right|\mathbb{Z}\rightarrow\Sigma_{k}$ by
$y(t):=x(t/\left|\eta\right|)$. Then $y$ is an $\left|\eta\right|$-periodic
orbit of $\mbox{\emph{sign}}(\eta)H$. Let us denote by $\mu_{\textrm{\emph{CZ}}}(y)$
the \textbf{\emph{Conley-Zehnder index}}\emph{ }of $y$. See \cite{RobbinSalamon1993}
for the definition of the Conley-Zehnder index in the degenerate case
that we are using (note however that our sign conventions match those
of \cite{AbbondandoloPortaluriSchwarz2008} not \cite{RobbinSalamon1993}).
Define \[
\mu(x,\eta):=\begin{cases}
\mu_{\textrm{\emph{CZ}}}(y)-\frac{1}{2}\chi(x,\eta) & \eta\ne0\\
-n+1 & \eta=0.
\end{cases}
\]

\end{defn}
We wish to compare $\mu(Z^{\pm}(q,T))$ with $i(q,T)$ for $(q,T)\in\mbox{Crit}(S_{L+k})$.
We will need an extension of the \textbf{Morse index theorem} of Duistermaat
\cite{Duistermaat1976} to the twisted symplectic form $\omega$:
\begin{thm}
\label{thm:morse index-1}Let $(q,T)\in\mbox{\emph{Crit}}(S_{L+k})$.
Let $y:\mathbb{R}/T\mathbb{Z}\rightarrow\Sigma$ be defined by $y(t):=Z^{+}(q,T)(t/T)$.
Then \[
\mu_{\textrm{\emph{CZ}}}(y)-\frac{1}{2}=i_{T}(q).
\]
\end{thm}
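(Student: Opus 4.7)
The plan is to recognise Theorem \ref{thm:morse index-1} as Duistermaat's Morse index theorem transplanted to the twisted setting, and to prove it by a direct spectral flow computation, with the details of signs and half-integer shifts deferred to \cite{MerryPaternain2010}. First I would write out the Hessian $\nabla^{2}_{q}S^{T}_{L+k}$ on $W^{1,2}(S^{1},q^{*}TM)$ as a selfadjoint Jacobi-type operator $\mathcal{J}_{q,T}$. After trivialising $q^{*}TM$ by parallel transport, $\mathcal{J}_{q,T}$ takes the form $-\nabla_{t}^{2}+\mathcal{R}(t)$, where the zeroth-order coefficient packages the Riemann curvature of $g$, the derivative of the Lorentz force $Y$ and $\nabla^{2}U$, exactly as one gets from the second variation of $S_{L+k}$ (the extra $\int\bar{q}^{*}\sigma$ term contributes a first-order antisymmetric piece built from $Y$). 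By construction $i_{T}(q)$ is the number of negative eigenvalues of $\mathcal{J}_{q,T}$ on periodic sections.

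Next I would identify $\ker\mathcal{J}_{q,T}$ with the space of horizontal projections of vector fields $\xi(t)=d\phi^{H}_{t}(y(0))\eta_{0}$ along $y$, for $\eta_{0}\in T_{y(0)}T^{*}M$, that close up after time $T$. This reduces the spectral problem to a problem about the symplectic path $\Phi(t):=d\phi^{H}_{t}(y(0))$ inside the linear symplectic group. The crucial geometric input is that the vertical distribution $V=T^{v}T^{*}M$ is Lagrangian for the twisted form $\omega$, because $\pi^{*}\sigma$ vanishes on pairs of vertical vectors; thus $V$ can be used as the reference Lagrangian both in the definition of $\mu_{\mathrm{CZ}}(y)$ (via Robbin--Salamon) and in the focal-point formula for $i_{T}(q)$.

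With these two descriptions in hand, the Morse index theorem in its Maslov-intersection form says
\[
i_{T}(q)=\sum_{t\in(0,T)}\mu_{\mathrm{cross}}\bigl(\Phi(t)V,V\bigr)+\tfrac{1}{2}\mu_{\mathrm{cross}}\bigl(\Phi(T)V,V\bigr)-\tfrac{1}{2}\mu_{\mathrm{cross}}\bigl(V,V\bigr),
\]
counted with the usual signature convention at degenerate crossings. The right-hand side is precisely the Robbin--Salamon index $\mu_{\mathrm{RS}}(\Phi V,V)$, and a standard identity (see \cite{RobbinSalamon1993}) rewrites $\mu_{\mathrm{RS}}(\Phi V,V)=\mu_{\mathrm{CZ}}(y)-\tfrac{1}{2}\dim\ker(\Phi(T)-\mathbb{1})$. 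Under our transverse non-degeneracy assumption, $\dim\ker(\Phi(T)-\mathbb{1})=1$, giving the claimed $-\tfrac{1}{2}$ correction.

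The main obstacle is the care needed at the endpoints: the orbit always has the tangent direction $\dot{y}$ as a Floquet eigenvector with eigenvalue $1$, and one must show that this is the \emph{only} source of half-integer contribution (i.e.\ that the additional endpoint contributions coming from the twisted form $\omega$ cancel out once the Lagrangian splitting is used). This is where the strong transverse non-degeneracy from Corollary \ref{pro:generic-1} and $\chi(y)=\pm 1$ enter decisively. A secondary subtlety is verifying that the sign conventions for the Rabinowitz functional, the Conley--Zehnder index in the Abbondandolo--Portaluri--Schwarz convention, and the Jacobi-operator convention are mutually consistent; rather than carrying out these book-keeping arguments in this paper, I would refer to the companion work \cite{MerryPaternain2010}, where the full index computation for magnetic Hamiltonian systems is worked out in detail.
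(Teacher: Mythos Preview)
Your approach is genuinely different from the paper's, and considerably more laborious. The paper does not redo the Morse index theorem in the twisted setting; instead it observes that both the Conley--Zehnder index and the Morse index are \emph{local} invariants of the orbit, and then uses a local trivialisation of $\sigma$ to reduce to the untwisted case. Concretely: take a tubular neighbourhood $W$ of $q(S^{1})$ in $M$; since $H^{2}(W)=0$ one has $\sigma|_{W}=d\theta$ for some $\theta\in\Omega^{1}(W)$, and the fibrewise translation $(q,p)\mapsto(q,p-\theta_{q})$ conjugates the $\omega$-flow of $H$ on $T^{*}W$ to the $\omega_{0}$-flow of $H_{\theta}(q,p)=H(q,p-\theta_{q})$. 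The statement then follows immediately from the standard Morse index theorem for convex Lagrangians with $\omega_{0}$, specifically \cite[Corollary 4.2]{AbbondandoloPortaluriSchwarz2008}. No spectral-flow computation in the twisted setting is needed.

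Your direct route via crossing forms and the vertical Lagrangian is in principle viable --- indeed the paper remarks that a direct proof along Weber's lines is carried out in \cite{Merry2011} --- but the step you single out as the crux is exactly where care is required. The identity you write as ``standard'',
\[
\mu_{\mathrm{RS}}(\Phi V,V)=\mu_{\mathrm{CZ}}(y)-\tfrac{1}{2}\dim\ker(\Phi(T)-\mathbb{1}),
\]
is not an off-the-shelf Robbin--Salamon formula: the general relationship between $\mu_{\mathrm{CZ}}(\Phi)$ (a graph/diagonal Maslov index) and the vertical Maslov index $\mu_{\mathrm{RS}}(\Phi V,V)$ involves a H\"ormander--Kashiwara correction that depends on $V$ and the dimension, and it is precisely the convexity of $H$ in $p$ (equivalently, positivity of the crossing form at $t=0$) that makes the extra terms come out as you need. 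If you pursue this route you must make that convexity argument explicit; deferring it entirely to \cite{MerryPaternain2010} leaves the proof here incomplete. The paper's local-exactness trick sidesteps all of this bookkeeping.
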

\begin{proof}
We deduce this from the equivalent statement for the standard symplectic
form $\omega_{0}$ (specifically, from \cite[Corollary 4.2]{AbbondandoloPortaluriSchwarz2008})
by arguing as follows: take a tubular neighborhood $W$ of $q(S^{1})$
in $M$. Since $H^{2}(W)=0$, $\sigma|_{W}=d\theta$ for some $\theta\in\Omega^{1}(W)$.
The flow $\phi_{t}^{H}|_{W}$ is conjugated to the flow $\psi_{t}^{H_{\theta}}:T^{*}W\rightarrow T^{*}W$,
where $H_{\theta}(q,p)=H(q,p-\theta_{q})$ and $\psi_{t}^{H_{\theta}}$
denotes the flow of the symplectic gradient of $H_{\theta}$ with
respect to the standard symplectic form $\omega_{0}$. Since both
the Maslov index and the Morse index are local invariants, the theorem
now follows directly from \cite[Corollary 4.2]{AbbondandoloPortaluriSchwarz2008}. \end{proof}
\begin{rem}
In \cite{Merry2011} we provide a direct proof of Theorem \ref{thm:morse index-1},
based on Weber's proof \cite[Theorem 1.3]{Weber2002} of the corresponding
statement for the standard symplectic form.
\end{rem}
The next corollary is an immediate consequence of Theorems \ref{thm:lag index}
and \ref{thm:morse index-1}, and the definition of the Conley-Zehnder
index.
\begin{cor}
\label{pro:morse indices and cz indices}Let $(q,T)\in\mbox{\emph{Crit}}(S_{L+k})$.
Then \[
\mu(Z^{\pm}(q,T))=\pm i(q,T).
\]

\end{cor}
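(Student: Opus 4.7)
The plan is to reduce both sign cases to Theorems \ref{thm:lag index} and \ref{thm:morse index-1} by unwrapping the definition of $\mu$, with the only nontrivial ingredient being a sign-flip of the Conley--Zehnder index under the involution $(x,\eta)\mapsto(x^{-},-\eta)$ on $\mbox{Crit}(A_{H-k})$.

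I would handle the $+$ case first by chasing through the definitions. The element $Z^{+}(q,T)=(x,T)$ has $\eta=T>0$, so the orbit appearing in the definition of $\mu$ is $y^{+}(t):=x(t/T)$, viewed as a $T$-periodic orbit of $X_{H}$; this is precisely the orbit appearing in Theorem \ref{thm:morse index-1}, which gives $\mu_{\textrm{CZ}}(y^{+})=i_{T}(q)+\tfrac{1}{2}$. Since $\chi(Z^{+}(q,T))=\chi(q,T)$, the definition of $\mu$ combined with Theorem \ref{thm:lag index} yields
\[
\mu(Z^{+}(q,T))=\mu_{\textrm{CZ}}(y^{+})-\tfrac{1}{2}\chi(q,T)=i_{T}(q)+\tfrac{1}{2}-\tfrac{1}{2}\chi(q,T)=i(q,T).
\]

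For the $-$ case the element $Z^{-}(q,T)=(x^{-},-T)$ has $\eta=-T<0$, so the associated orbit in the definition of $\mu$ is $y^{-}(t):=x^{-}(t/T)=y^{+}(-t)$, now viewed as a $T$-periodic orbit of $-H$. Since $\phi_{t}^{-H}=\phi_{-t}^{H}$, the linearised symplectic path along $y^{-}$ is the pointwise inverse of the linearised symplectic path along $y^{+}$. Using the standard anti-symmetry of the Robbin--Salamon index under inversion of the path, one obtains $\mu_{\textrm{CZ}}(y^{-})=-\mu_{\textrm{CZ}}(y^{+})$. Combined with the identity $\chi(Z^{-}(q,T))=-\chi(q,T)$ already recorded in the text, this gives
\[
\mu(Z^{-}(q,T))=-\mu_{\textrm{CZ}}(y^{+})+\tfrac{1}{2}\chi(q,T)=-\bigl(i_{T}(q)+\tfrac{1}{2}-\tfrac{1}{2}\chi(q,T)\bigr)=-i(q,T).
\]

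The main obstacle is verifying the identity $\mu_{\textrm{CZ}}(y^{-})=-\mu_{\textrm{CZ}}(y^{+})$ with the Robbin--Salamon (half-integer) conventions used in the paper. For non-degenerate periodic orbits this sign-flip is completely standard, but in the Morse--Bott/degenerate setting needed here one must check that the Robbin--Salamon index, which assigns half-integer jumps at each crossing of the Maslov cycle, reverses all crossing signs when the path is inverted, and that this is compatible with the half-integer correction $-\tfrac{1}{2}\chi$ appearing in the definition of $\mu$. This is the only place where one must be careful, and amounts to matching sign conventions with \cite{AbbondandoloPortaluriSchwarz2008}.
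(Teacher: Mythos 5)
Your proposal is correct and follows essentially the same approach as the paper, which dispenses with the proof by calling the corollary an immediate consequence of Theorems \ref{thm:lag index} and \ref{thm:morse index-1} and ``the definition of the Conley--Zehnder index'' --- the last phrase being exactly the time-reversal sign-flip $\mu_{\mathrm{CZ}}(y^{-})=-\mu_{\mathrm{CZ}}(y^{+})$ that you correctly isolate as the one point requiring care. Your bookkeeping of $\chi(Z^{\pm}(q,T))=\pm\chi(q,T)$ and the resulting cancellation of the half-integer terms is exactly right.
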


\subsection{The moduli spaces of Rabinowitz Floer homology}

$\ $\vspace{6 pt}

Throughout this subsection assume $(g,\sigma,U,k)\in\mathcal{O}_{\textrm{reg}}$
is fixed (recall by assumption this means $k>c(g,\sigma,U)$, cf.
Definition \ref{def:the set Uk}), and put $H=H_{g}+\pi^{*}U$. Fix
$J\in\mathcal{J}(\omega)$. We are interested in maps $u:\mathbb{R}\rightarrow\Lambda T^{*}M\times\mathbb{R}$
that satisfy the \textbf{Rabinowitz Floer equation}:\begin{equation}
u'(s)+\nabla A_{H-k}(u(s))=0\label{eq:rfeq}
\end{equation}
together with the asymptotic conditions\[
\lim_{s\rightarrow\pm\infty}u(s)\in\mbox{Crit}(A_{H-k}).
\]
It is well known that any such map $u$ is smooth, and extends to
a map (also denoted by) $u:\overline{\mathbb{R}}\rightarrow C^{\infty}(S^{1},T^{*}M)\times\mathbb{R}$.
We shall often regard such a map $u$ as an element of $C^{\infty}(\mathbb{R}\times S^{1},T^{*}M)\times C^{\infty}(\mathbb{R},\mathbb{R})$.
If we write $u(s,t)=(x(s,t),\eta(s))$ then \eqref{eq:rfeq} implies
that $x$ and $\eta$ solve the coupled equations\[
x'+J(t,x)(\dot{x}-\eta X_{H}(x))=0;
\]
\[
\eta'-\int_{S^{1}}(H(x(t))-k)dt=0.
\]
Choose a Morse function $h:\mbox{Crit}(A_{H-k})\rightarrow\mathbb{R}$
and a Riemannian metric $g_{1}$ on $\mbox{Crit}(A_{H-k})$ such that
the negative gradient flow $\phi_{t}^{-\nabla h}$ of $-\nabla h$
is Morse-Smale. Denote by $\mbox{Crit}(h)\subseteq\mbox{Crit}(A_{H-k})$
the set of critical points of $h$. The Morse-Smale assumption implies
that for every pair $z_{-},z_{+}$ of critical points of $h$ the
unstable manifold $W^{u}(z_{-};-\nabla h)$ intersects the stable
manifold $W^{s}(z_{+};-\nabla h)$ transversely. Denote by $i_{h}(z):=\dim\, W^{u}(z;-\nabla h)$
the Morse index of a critical point $z\in\mbox{Crit}(h)$. We define
a new grading $\widehat{\mu}_{h}:\mbox{Crit}(h)\rightarrow\mathbb{Z}$
by putting\[
\widehat{\mu}_{h}(z):=\mu(z)+i_{h}(z).
\]
Suppose $z_{\pm}=(x_{\pm},\eta_{\pm})\in\mbox{Crit}(h)$ are critical
points of $h$. Denote by \[
\widetilde{\mathcal{M}}_{0}(z_{-},z_{+}):=W^{u}(z_{-};-\nabla h)\cap W^{s}(z_{+};-\nabla h).
\]
Let \[
\mathcal{M}_{0}(z_{-},z_{+}):=\widetilde{\mathcal{M}}_{0}(z_{-},z_{+})/\mathbb{R}
\]
 denote the quotient of $\widetilde{\mathcal{M}}_{0}(z_{-},z_{+})$
by the obvious free $\mathbb{R}$-action (if $z_{-}=z_{+}$, $\mathcal{M}_{0}(z_{-},z_{+})=\emptyset$).
Given $m\in\mathbb{N}$, let \[
\widetilde{\mathcal{M}}_{m}(z_{-},z_{+})
\]
denote the set of tuples of maps $\boldsymbol{u}=(u_{1},\dots,u_{m})$
such that each $u_{i}:\mathbb{R}\rightarrow C^{\infty}(S^{1},T^{*}M)\times\mathbb{R}$
satisfies the Rabinowitz Floer equation \eqref{eq:rfeq} and is \textbf{non-stationary}
(here a \textbf{stationary}\emph{ }solution is one that does not depend
on $s$) and such that\[
u_{1}(-\infty)\in W^{u}(z_{-};-\nabla h),\dots,u_{m}(\infty)\in W^{s}(z_{+};-\nabla h);
\]
\[
u_{i+1}(-\infty)\in\phi_{\mathbb{R}^{+}}^{-\nabla h}(u_{i}(\infty)).
\]
Note that if $m\geq1$ then $\widetilde{\mathcal{M}}_{m}(z_{-},z_{+})$
admits a free action of $\mathbb{R}^{m}$ via \[
(u_{1}(s),\dots,u_{m}(s))\mapsto(u_{1}(s+s_{1}),\dots,u_{m}(s+s_{m})),\ \ \ (s_{1},\dots,s_{m})\in\mathbb{R}^{m}.
\]
We denote by $\mathcal{M}_{m}(z_{-},z_{+})$ the quotient of $\widetilde{\mathcal{M}}_{m}(z_{-},z_{+})$
by this action. Put \[
\mathcal{M}(z_{-},z_{+}):=\bigcup_{m\in\mathbb{N}\cup\{0\}}\mathcal{M}_{m}(z_{-},z_{+}).
\]

Since $A_{H-k}$ is strictly decreasing on non-stationary solutions
of the Rabinowitz Floer equation, if $z_{-}$ and $z_{+}$ belong
to the same connected component of $\mbox{Crit}(A_{H-k})$ then $\mathcal{M}_{m}(z_{-},z_{+})=\emptyset$
for all $m\geq1$, and if $\mathcal{M}_{m}(z_{-},z_{+})\ne\emptyset$
for some $m\geq1$, then $A_{H-k}(z_{-})>A_{H-k}(z_{+})$ and $\mathcal{M}_{0}(z_{-},z_{+})=\emptyset$.
\newline

The central result we need to construct the Rabinowitz Floer complex
is the following:
\begin{thm}
\label{thm:main theorem-1}There exists $\varepsilon_{1}>0$ such
that if $J\in\mathcal{J}(\omega)\cap B_{\varepsilon_{1}}(J_{g})$
is a generically chosen almost complex structure and $g_{1}$ is a
generically chosen Morse-Smale metric for $h$ then the moduli spaces
$\mathcal{M}(z_{-},z_{+})$ are all finite dimensional smooth manifolds,
and their components of dimension zero are compact. Moreover we have
\begin{equation}
\dim\,\mathcal{M}(z_{-},z_{+})=\widehat{\mu}_{h}(z_{-})-\widehat{\mu}_{h}(z_{+})-1.\label{eq:dimension formula}
\end{equation}

\end{thm}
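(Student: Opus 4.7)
The plan is to establish three separate properties in sequence: (i) the moduli spaces are smooth Banach submanifolds for generic data, (ii) their Fredholm dimensions add up to the stated formula, and (iii) the zero-dimensional components are compact. Throughout, the parameter $\varepsilon_{1}$ must be chosen with $\varepsilon_{1}>\varepsilon_{0}\left\Vert \sigma\right\Vert _{\infty}$ so that $\mathcal{J}(\omega)\cap B_{\varepsilon_{1}}(J_{g})$ is non-empty by \eqref{eq:ball not empty}, and moreover small enough for the $L^{\infty}$-bounds alluded to in the introduction (via the Abbondandolo--Schwarz method) to apply; the necessary smallness of $\left\Vert \sigma\right\Vert _{\infty}$ is harmless thanks to Lemma \ref{lem:scaling c} and the scaling argument indicated in the introduction.

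For transversality, I would set up the usual universal moduli space over the parameter space $\mathcal{J}(\omega)\cap B_{\varepsilon_{1}}(J_{g})\times\{g_{1}\}$ of almost complex structures and auxiliary Morse--Smale metrics. The linearization of \eqref{eq:rfeq} at a non-stationary solution $u=(x,\eta)$ is a compact perturbation of a Cauchy--Riemann--type operator on $W^{1,p}(\mathbb{R}\times S^{1},x^{*}TT^{*}M)\oplus W^{1,p}(\mathbb{R},\mathbb{R})$; the $\eta$-variable contributes a one-dimensional cokernel correction but the operator remains Fredholm. Surjectivity of the universal linearization follows by the standard argument: a non-zero cokernel element would force $x$ to be stationary on an open set and hence everywhere (unique continuation), contradicting non-stationarity. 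Applying the Sard--Smale theorem then gives a generic $J$ for which $\widetilde{\mathcal{M}}_{m}(z_{-},z_{+})$ is a smooth manifold; Morse--Smale genericity of $g_{1}$, combined with the transversality of evaluation maps at $\pm\infty$ against stable/unstable manifolds of $-\nabla h$, makes the fibre product construction defining $\mathcal{M}_{m}(z_{-},z_{+})$ transverse as well.

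For the dimension formula, I would compute the Fredholm index of each cascade $u_{i}$ separately. By a spectral-flow/Robbin--Salamon calculation adapted to the Rabinowitz setting, analogous to \cite[Proposition 4.1]{CieliebakFrauenfelder2009} but using the Conley--Zehnder conventions fixed earlier together with the grading $\mu$, the Fredholm index of the linearization at $u_{i}$ joining $z_{i-1}^{+}\in\mbox{Crit}(A_{H-k})$ to $z_{i}^{-}\in\mbox{Crit}(A_{H-k})$ equals $\mu(z_{i-1}^{+})-\mu(z_{i}^{-})+\dim C_{i}$, where $C_{i}$ is the component of $\mbox{Crit}(A_{H-k})$ containing $z_{i-1}^{+}$ and $z_{i}^{-}$; the extra $\dim C_{i}$ records the freedom to vary the asymptotic limit within the Morse--Bott critical manifold. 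Summing over the $m+1$ cascade pieces (or the single piece if $m=0$) and adding the dimensions $i_{h}(z_{-})$ and $\dim W^{s}(z_{+};-\nabla h)=\dim(\mbox{component of }z_{+})-i_{h}(z_{+})$ contributed by the unstable and stable manifolds at the endpoints, then subtracting one for each $\mathbb{R}$-action quotient and each gradient-flow-time fibre, yields precisely \eqref{eq:dimension formula}. This is essentially the cascade-version of the grading calculation in \cite[Appendix A]{Frauenfelder2004}, and the signs work because of our definition of $\mu$.

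The main obstacle is compactness. For zero-dimensional components of $\mathcal{M}(z_{-},z_{+})$, I must show that a sequence $(\boldsymbol{u}^{n})$ admits no bubbling, no escape to infinity in the $T^{*}M$-direction, no breaking of $s$-reparametrization other than that already recorded by cascades, and bounded Lagrange multipliers $\eta^{n}$. Bubbling is ruled out by symplectic asphericity (Lemma \ref{lem:key observation}), since $c_{1}(T^{*}M,J)=0$ and $[\omega]$ vanishes on spheres. Uniform bounds on the action along the flow together with the non-degeneracy of $k>c(g,\sigma,U)$ give bounds on $\eta^{n}$ by an argument of Cieliebak--Frauenfelder--Paternain. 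The essential new difficulty, since $H_{g}+\pi^{*}U$ is not a defining Hamiltonian in the standard sense, is the $L^{\infty}$-bound on the loop component $x^{n}(s,t)$; here I would invoke the Abbondandolo--Schwarz technique (see the $L^{\infty}$-theorem to appear later in the paper, which is precisely where the choice $J\in B_{\varepsilon_{1}}(J_{g})$ enters). With these bounds in hand, standard Gromov--Floer compactness produces a limiting broken trajectory whose total expected dimension cannot drop below $\dim\mathcal{M}(z_{-},z_{+})=0$, so no breaking occurs and the sequence converges modulo the existing cascade structure.
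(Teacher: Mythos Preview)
Your outline matches the paper's own strategy: a four-ingredient decomposition into (1) the Banach-bundle/cascade setup from \cite[Appendix A]{Frauenfelder2004}, (2) the Fredholm index computation (where the paper flags the extra $-\frac{1}{2}\chi(z)$ correction term and defers the full details to \cite{MerryPaternain2010}), (3) transversality via \cite{FloerHoferSalamon1996} combined with Frauenfelder's Morse--Bott formalism, and (4) the new $L^{\infty}$-estimates (Theorems \ref{thm:bounding the lagrange multiplier} and \ref{thm:l infinity}) followed by standard Gromov--Floer compactness. One small correction: $\varepsilon_{1}$ is not a parameter you get to choose but rather the universal Calderon--Zygmund constant of Theorem \ref{thm:calderon }, so non-emptiness of $\mathcal{J}(\omega)\cap B_{\varepsilon_{1}}(J_{g})$ is arranged by shrinking $\left\Vert\sigma\right\Vert_{\infty}$ (via the scaling of Lemma \ref{lem:scaling c}), not by enlarging $\varepsilon_{1}$.
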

The proof of the theorem has four ingredients:
\begin{enumerate}
\item Exhibit $\mathcal{M}(z_{-},z_{+})$ as the zero set of a certain section
of a Banach bundle. 
\item Show that the linearization of this operator is Fredholm, and compute
its index.
\item Show that for generic $J,g_{1}$ the linearization is surjective.
\item Exhibit uniform $C_{\textrm{loc}}^{\infty}$ bounds for gradient flow
lines.
\end{enumerate}
We refer to one of the many references (perhaps the two most relevant
are \cite[Appendix A]{Frauenfelder2004} and \cite[Section 3]{AbbondandoloSchwarz2006})
as to why solving these four problems does indeed lead to a proof
of the theorem. Problem (1) was solved in \cite[Appendix A]{Frauenfelder2004}.
Problem (2) was solved for \textbf{defining Hamiltonians }and \textbf{restricted
contact type }hypersurfaces in \cite[Section 4]{CieliebakFrauenfelder2009}.
In our situation there is an additional complication in computing
the indices (stemming from the correction term $-\frac{1}{2}\chi(z)$).
Full details of the computation of the index are contained in \cite{MerryPaternain2010}.
Alternatively one could probably use the methods of \cite[Section 3.2]{BourgeoisOancea2009a}. 

Problem (3) can be solved using the methods in \cite{FloerHoferSalamon1996}
combined with the Morse-Bott formalism of \cite[Theorem A.14]{Frauenfelder2004}.
Problem (4) was solved for Hamiltonians that are constant outside
a compact set in \cite[Section 3]{CieliebakFrauenfelder2009} and
extended to Hamiltonians that are linear at infinity \cite[Section 5]{CieliebakFrauenfelderOancea2010}
and then Hamiltonians which grow quadratically and radially at infinity
\cite[Section 2]{AbbondandoloSchwarz2009}. None of these are applicable
for the Hamiltonians $H_{g}+\pi^{*}U$ that we consider, and hence
we will give a complete proof of this below. Our methods are essentially
those of \cite{AbbondandoloSchwarz2006}. Since $\omega|_{\pi_{2}(M)}=0$
and $c_{1}(T^{*}M,\omega)=0$, in order to get $C_{\textrm{loc}}^{\infty}$
bounds on gradient flow lines of the Rabinowitz Floer equation it
is sufficient to obtain $L^{\infty}$ bounds (in short, this is because
the so-called `bubbling' phenomenon cannot occur). Obtaining these
$L^{\infty}$ estimates is the subject of Subsection \ref{sub:The--estimates}
below.
\begin{rem}
It is perhaps useful to explain exactly where our various hypotheses
are used. The fact that $(g,\sigma,U,k)\in\mathcal{O}$ (i.e. $k>c(g,\sigma,U)$)
is used in order in order to obtain $L^{\infty}$ bounds on the $\eta$-component
of gradient flows lines $u\in\mathcal{M}(z_{-},z_{+})$. The bound
on the $x$-component requires two assumptions: firstly that the $\eta$-component
is uniformly bounded, and secondly that $J\in B_{\varepsilon_{1}}(J_{g})$.
Finally, the assumption $(g,\sigma,U,k)\in\mathcal{O}_{\textrm{\emph{reg}}}$
is used in order to compute the index of the operator defining the
moduli space $\mathcal{M}(z_{-},z_{+})$ - recall that our grading
$\mu$ explicitly used the existence of an orbit cylinder, which need
not exist if only $(g,\sigma,U,k)\in\mathcal{O}$. 
\end{rem}

\begin{rem}
\label{rem:shrinking sigma}The constant $\varepsilon_{1}>0$ appearing
in the statement of Theorem \ref{thm:main theorem-1} is a universal
constant (cf. Theorem \ref{thm:calderon } below). In order for the
statement of Theorem \ref{thm:main theorem-1} not to be completely
vacuous one of course needs to know that such almost complex structures
exist. This can be guaranteed by assuming $\left\Vert \sigma\right\Vert $
is sufficiently small. Indeed, suppose $\sigma$ satisfies\begin{equation}
\left\Vert \sigma\right\Vert _{\infty}\leq\frac{\varepsilon_{1}}{2\varepsilon_{0}}.\label{eq:epsilon 2}
\end{equation}
Then by \eqref{eq:ball not empty} we have \[
B_{\varepsilon_{1}/2}(J_{\sigma})\subseteq B_{\varepsilon_{1}}(J_{g}).
\]

\end{rem}

\subsection{Constructing the chain complex}

$\ $\vspace{6 pt}

Deferring the proof of Problem (4), we first explain the construction
of Rabinowitz Floer chain complex. Assume that the hypotheses of Theorem
\ref{thm:main theorem-1} are satisfied. Denote by $RF(A_{H-k},h)$
the $\mathbb{Z}_{2}$-vector space generated by all formal sums \[
\sum_{z\in V}z,
\]
where $V\subseteq\mbox{Crit}(h)$ is a (possibly infinite) subset
of $\mbox{Crit}(h)$ satisfying the \textbf{Novikov finiteness condition}\emph{
}that for all $a\in\mathbb{R}$ one has \[
\#\left\{ z\in V\,:\, A_{H-k}(z)<a\right\} <\infty.
\]
Let us write $\mbox{Crit}_{i}(h)\subseteq\mbox{Crit}(h)$ for the
set of critical points $z$ of $h$ with $\widehat{\mu}_{h}(z)=i$.
The vector space $RF(A_{H-k},h)$ is given a $\mathbb{Z}$-grading
by the index $\widehat{\mu}_{h}$: an element $\sum_{z\in V}z\in RF(A_{H-k},h)$
belongs to $RF_{i}(A_{H-k},h)$ if $V\subseteq\mbox{Crit}_{i}(h)$.
Similarly, given an interval $(a,b)\subseteq\overline{\mathbb{R}}$,
denote by $RF^{(a,b)}(A_{H-k},h)$ the $\mathbb{Z}_{2}$-vector space
of all formal sums \[
\sum_{z\in V}z,
\]
where $V\subseteq\mbox{Crit}^{(a,b)}(h)$ is a (possibly infinite)
subset of $\mbox{Crit}^{(a,b)}(h)$ satisfying the finiteness condition
above (note that if $a$ and $b$ are finite then such a set $V$
is necessarily finite and the Novikov finiteness condition is automatic).

If $z_{\pm}\in\mbox{Crit}(h)$ satisfy $\widehat{\mu}_{h}(z_{-})-\widehat{\mu}_{h}(z_{+})=1$
then Theorem \ref{thm:main theorem-1} tells us that $\mathcal{M}(z_{-},z_{+})$
is a finite set. We can therefore define $n_{\textrm{Rab}}(z_{-},z_{+})$
by \[
n_{\textrm{Rab}}(z_{-},z_{+}):=\#\mathcal{M}(z_{-},z_{+}),\ \ \ \mbox{taken mod }2.
\]
Then we define\[
\partial^{\textrm{Rab}}=\partial^{\textrm{Rab}}(J,g_{1}):RF_{i}(A_{H-k},h)\rightarrow RF_{i-1}(A_{H-k},h)
\]
 by \[
\partial^{\textrm{Rab}}z:=\sum_{z'\in\textrm{Crit}_{i-1}(h)}n_{\textrm{Rab}}(z,z')z',
\]
and extending by linearity. A standard gluing argument tells us that
$\partial^{\textrm{Rab}}\circ\partial^{\textrm{Rab}}=0$, and therefore
we conclude that $\{RF_{*}(A_{H-k},h),\partial^{\textrm{Rab}}(J,g_{1})\}$
is a chain complex of Abelian groups. The boundary map $\partial^{\textrm{Rab}}$
respects the $\mathbb{R}$-filtration determined by $A_{H-k}$: if
$(a,b)\subseteq\overline{\mathbb{R}}$ then \[
\partial^{\textrm{Rab}}\left(RF_{i}^{(a,b)}(A_{H-k},h)\right)\subseteq RF_{i-1}^{(a,b)}(A_{H-k},h),
\]
and so $\{RF_{*}^{(a,b)}(A_{H-k},h),\partial^{\textrm{Rab}}(J,g_{1})\}$
is a subcomplex. Finally it is clear that $\partial^{\textrm{Rab}}$
also respects the splitting $\Lambda T^{*}M\oplus\mathbb{R}=\bigoplus_{\alpha\in[S^{1},M]}\Lambda_{\alpha}T^{*}M\times\mathbb{R}$:
if $RF_{*}(A_{H-k},h;\alpha)$ denotes the subspace of $RF_{*}(A_{H-k},h)$
generated by the elements of $\mbox{Crit}(h)\cap\mbox{Crit}(A_{H-k};\alpha)$
then $RF_{*}(A_{H-k},h;\alpha)$ is a subcomplex.

We write $RFH_{*}(A_{H-k})$ for the homology of $\{RF_{*}(A_{H-k},a),\partial^{\textrm{Rab}}(J,g_{1})\}$
and call it the \textbf{Rabinowitz Floer homology}\emph{ }of $A_{H-k}$.
Similarly we write $RFH_{*}(A_{H-k};\alpha)$ (resp. $RFH_{*}^{(a,b)}(A_{H-k})$)
for the homology of the subcomplex $RF_{*}(A_{H-k},h;\alpha)$ (resp.
$RF_{*}^{(a,b)}(A_{H-k},h)$). Standard arguments show that $RFH_{*}(A_{H-k})$
is independent of the data $(h,J,g_{1})$. 
\begin{rem}
\label{rem:inv under g}In fact, if $(g_{s},\sigma_{s},U_{s},k_{s})_{s\in[0,1]}\subseteq\mathcal{O}$
is a smooth family that satisfies $(g_{s},\sigma_{s},U_{s},k_{s})\in\mathcal{O}_{\textrm{\emph{reg}}}$
for generic $s\in[0,1]$ and in particular for $s=0,1$ then if $H_{s}(q,p):=\frac{1}{2}\left|p\right|_{g_{s}}^{2}+U_{s}(q)$
and $\omega_{s}:=\omega_{0}+\pi^{*}\sigma_{s}$ then $RFH_{*}(A_{H_{0}-k_{0}};\omega_{0})\cong RFH_{*}(A_{H_{1}-k_{1}};\omega_{1})$.
One can prove this directly using the methods of \cite[Section 1.8]{AbbondandoloSchwarz2006}
and \cite{BaeFrauenfelder2010}. However we can deduce this indirectly
via Theorem 1.1.(b) in \cite{CieliebakFrauenfelderPaternain2010}
and Theorem 1.4 in \cite{BaeFrauenfelder2010}, by making use of Proposition
\ref{quadr is const} below, which states that the Rabinowitz Floer
homology $RFH_{*}(A_{H-k})$ is the same as the Rabinowitz Floer homology
$RFH_{*}(\Sigma_{k},T^{*}M)$ from \cite{CieliebakFrauenfelderPaternain2010}. 

As a consequence we are free to define the Rabinowitz Floer homology
$RFH_{*}(A_{H-k})$ for the Hamiltonian $H=H_{g}+\pi^{*}U$ if only
$(g,\sigma,U,k)\in\mathcal{O}$ (rather than $(g,\sigma,U,k)\in\mathcal{O}_{\textrm{\emph{reg}}}$).
Indeed, by Theorem \ref{thm:generic} we can find a metric $g'$ lying
arbitrarily close to $g$ such that $(g,\sigma,U,k)\in\mathcal{O}_{\textrm{\emph{reg}}}$.
Set $H':=H_{g'}+\pi^{*}U$ and \textbf{\emph{define }}$RFH_{*}(A_{H-k}):=RFH_{*}(A_{H'-k}).$
This is well defined, as if $g''$ is another such metric and $H'':=H_{g''}+\pi^{*}U$
then the previous paragraph implies $RFH_{*}(A_{H'-k})\cong RFH_{*}(A_{H''-k})$.
\end{rem}

\subsection{\label{sub:The--estimates}The $L^{\infty}$ estimates}

$\ $\vspace{6 pt}

In this subsection we prove the two theorems on $L^{\infty}$ estimates
for solutions of the Rabinowitz Floer equation alluded to above, as
well as a third $L^{\infty}$ estimate for gradient flow lines defined
on half-cylinders that will be needed in the next section. The first
result we state is an extension of part of \cite[Theorem 3.1]{CieliebakFrauenfelder2009},
which obtains uniform $L^{\infty}$ bounds for the $\eta$-component
of flow lines $u=(x,\eta)\in C^{\infty}(\mathbb{R}\times S^{1},T^{*}M)\times C^{\infty}(\mathbb{R},\mathbb{R})$
satisfying the Rabinowitz Floer equation and having bounded $A_{H-k}$-action.
This result (for contractible loops only) was stated without proof
in \cite[Section 7]{CieliebakFrauenfelderPaternain2010}. 
\begin{thm}
\label{thm:bounding the lagrange multiplier}Let $(g,\sigma,U,k)\in\mathcal{O}$
and put $H=H_{g}+\pi^{*}U$. Pick $J\in\mathcal{J}(\omega)$ and $\alpha\in[S^{1},M]$,
and fix $-\infty<a<b<\infty$. There exists a constant $C_{0}>0$
such that if $u=(x,\eta)\in C^{\infty}(\mathbb{R}\times S^{1},T^{*}M)\times C^{\infty}(\mathbb{R},\mathbb{R})$
is any map that satisfies the Rabinowitz Floer equation \eqref{eq:rfeq}
and has action bounds \[
A_{H-k}(u(\mathbb{R}))\subseteq[a,b]
\]
and satisfies\[
x(\mathbb{R},\cdot)\in\Lambda_{\alpha}T^{*}M
\]
then \[
\left\Vert \eta\right\Vert _{L^{\infty}(\mathbb{R})}\leq C_{0}.
\]
\end{thm}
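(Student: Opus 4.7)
The plan is to follow the classical two-step strategy for $L^\infty$ bounds on the Lagrange multiplier in Rabinowitz Floer theory: first prove a \emph{fundamental lemma} bounding $|\eta|$ pointwise at any approximately critical point with bounded action, then bootstrap this to an $L^\infty$ bound along the flow via the quadratic energy identity. The starting point is
\[
\int_{\mathbb{R}} \|\nabla A_{H-k}(u(s))\|_J^2\, ds \;=\; A_{H-k}(u(-\infty)) - A_{H-k}(u(+\infty)) \;\leq\; b - a,
\]
immediate from the gradient flow equation and the action bounds. Since the scalar component of $\nabla A_{H-k}$ equals $-\int_{S^1}(H(x) - k)\, dt$, the second component of the flow equation gives $|\eta'(s)| \leq \|\nabla A_{H-k}(u(s))\|$, so $\eta$ varies slowly on intervals where the gradient norm is small.

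The heart of the argument is the following fundamental lemma: there exist constants $\varepsilon_{*} > 0$ and $\kappa > 0$, depending on $a,b,\alpha$ and the data $(g,\sigma,U,k,J)$, such that
\[
\|\nabla A_{H-k}(x,\eta)\|_{L^2} \leq \varepsilon_{*} \quad\text{and}\quad A_{H-k}(x,\eta) \in [a,b] \quad\Longrightarrow\quad |\eta| \leq \kappa.
\]
Replacing $(x,\eta)$ by $(x(-\,\cdot\,),-\eta)$ reduces to $\eta \geq 0$. The hypothesis $k > c(g,\sigma,U)$ furnishes a primitive $\theta \in \Omega^1(\widetilde M)$ of $\widetilde\sigma$ and a number $\delta > 0$ with $\tfrac{1}{2}|\theta_q|^2 + \widetilde U(q) \leq k - \delta$ for all $q \in \widetilde M$. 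Lifting $x$ to $\widetilde x = (\widetilde q,p)$ and using $\widetilde\omega = d(\lambda_0 + \pi^{*}\theta)$ together with identity \eqref{eq:liouville 1 form and c} gives
\[
A_{H-k}(x,\eta) \;=\; \int_0^1 (\lambda_0 + \pi^{*}\theta)(\dot{\widetilde x})\, dt \;-\; \eta \int_0^1 (H-k)\, dt \;-\; I(\alpha,\theta).
\]
Decomposing $\dot{\widetilde x} = \eta X_H(\widetilde x) + \xi$ with $\|\xi\|_{L^2} \leq \varepsilon_{*}$, and using the pointwise identities $\lambda_0(X_H) = |p|^2$ and $\pi^{*}\theta(X_H) = \theta(p)$ together with the completing-the-square computation
\[
|p|^2 + \theta(p) \;=\; \tfrac{1}{2}|p+\theta|^2 + \bigl(k - U - \tfrac{1}{2}|\theta|^2\bigr) + (H-k) \;\geq\; \delta + (H-k),
\]
one obtains $\int_0^1 (|p|^2 + \theta(p))\, dt \geq \delta - \varepsilon_{*}$. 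A preliminary bound on $\|p\|_{L^2}$, coming from $\int|p|^2 = 2\int(H - U)$ together with $|\int(H-k)| \leq \varepsilon_{*}$, controls the cross term $\int(\lambda_0 + \pi^{*}\theta)(\xi)\, dt$ by $O(\varepsilon_{*})$, independent of $\eta$. Assembling these estimates yields
\[
A_{H-k}(x,\eta) \;\geq\; \eta(\delta - 2\varepsilon_{*}) - C\varepsilon_{*} - I(\alpha,\theta),
\]
and for $\varepsilon_{*}$ sufficiently small relative to $\delta$ the upper bound $A_{H-k}(x,\eta) \leq b$ forces $\eta \leq \kappa$.

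Finally, for any $s_0 \in \mathbb{R}$ and any $R > 0$, Chebyshev's inequality applied to the energy estimate produces some $s_{*} \in [s_0 - R, s_0 + R]$ with $\|\nabla A_{H-k}(u(s_{*}))\|^2 \leq (b-a)/(2R)$; choose $R$ so that this quantity is $\leq \varepsilon_{*}^2$. The fundamental lemma gives $|\eta(s_{*})| \leq \kappa$, and Cauchy--Schwarz combined with $|\eta'| \leq \|\nabla A_{H-k}\|$ yields
\[
|\eta(s_0) - \eta(s_{*})| \;\leq\; \int_{s_{*}}^{s_0} \|\nabla A_{H-k}(u(\tau))\|\, d\tau \;\leq\; \sqrt{R(b-a)},
\]
so $|\eta(s_0)| \leq \kappa + \sqrt{R(b-a)} =: C_0$, uniformly in $s_0$. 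The main technical obstacle is the fundamental lemma, specifically closing the loop of inequalities: the coercivity margin $\delta$ produced by the Ma\~n\'e condition must survive the errors from both $\xi$ and the $\eta\int(H-k)dt$ term, which is where the strict inequality $k > c$ (as opposed to $k \geq c$) enters essentially, since $\delta$ collapses to zero at the critical value.
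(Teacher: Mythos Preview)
Your proof is correct and follows the same two-step scheme as the paper: a fundamental lemma bounding $|\eta|$ at approximately critical points, then a bootstrap along the flow line using the finite energy. The one noteworthy difference is in how you obtain the fundamental lemma. The paper first invokes an auxiliary step (cited from \cite{CieliebakFrauenfelder2009}) showing that small $\|\nabla A_{H-k}\|$ forces $x(S^{1})\subseteq H^{-1}([k-\delta,k+\delta])$, and then uses that $\widetilde\lambda$ is $L^{\infty}$-bounded on this shell to control the cross term $\int\widetilde\lambda(\dot{\widetilde x}-\eta X_{\widetilde H})\,dt$. You instead exploit the explicit quadratic form of $H$ to get an a priori $L^{2}$ bound on $p$ directly from $|\int(H-k)\,dt|\leq\varepsilon_{*}$, and then control the same cross term by Cauchy--Schwarz; your completing-the-square identity $|p|^{2}+\theta(p)\geq\delta+(H-k)$ is exactly the pointwise form of the virtual contact condition $\widetilde\lambda(X_{\widetilde H})\geq\delta$ that the paper quotes as a black box from \cite{CieliebakFrauenfelderPaternain2010}. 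Your route is slightly more self-contained for this specific Hamiltonian, at the cost of using the quadratic structure of $H$, whereas the paper's argument (as it remarks) uses nothing about $H$ beyond the virtual restricted contact type of $\Sigma_{k}$ and so applies verbatim to any reparametrizing Hamiltonian.
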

\begin{rem}
We emphasize that the following proof uses only that $\Sigma_{k}:=H^{-1}(k)$
is of \textbf{\emph{virtual restricted contact type}}\textbf{ }(see
\cite[p1767]{CieliebakFrauenfelderPaternain2010} for the definition)
for $k>c(g,\sigma,U)$; it makes \textbf{\emph{no}} assumptions on
the behaviour of the Hamiltonian $H$ at infinity. In other words,
the proof would go through if instead of $H$ we used any other Hamiltonian
$K\in C^{\infty}(T^{*}M,\mathbb{R})$ with the property that $X_{K}|_{\Sigma_{k}}=fX_{H}|_{\Sigma_{k}}$
for some smooth function $f\in C^{\infty}(\Sigma_{k},\mathbb{R}^{+})$.\end{rem}
\begin{proof}
\emph{(of Theorem \ref{thm:bounding the lagrange multiplier})}

The proof is a slight modification of the arguments of \cite[Section 3]{CieliebakFrauenfelder2009}.
Let $\widetilde{H}:T^{*}\widetilde{M}\rightarrow\mathbb{R}$ denote
the lift of $H$ to $\widetilde{\pi}:T^{*}\widetilde{M}\rightarrow\widetilde{M}$.
Let $\widetilde{\omega}:=\widetilde{\omega}_{0}+\widetilde{\pi}^{*}\widetilde{\sigma}$,
where $\widetilde{\omega}_{0}=d\widetilde{\lambda}_{0}$ is the canonical
symplectic form on $T^{*}\widetilde{M}$. Since $k>c(g,\sigma,U)$,
by \cite[Lemma 5.1]{CieliebakFrauenfelderPaternain2010} there exists
a primitive $\theta$ of $\widetilde{\sigma}$ and $\delta>0$ such
that \begin{equation}
\widetilde{\lambda}(X_{\widetilde{H}}(x))\geq2\delta\ \ \ \mbox{for all }x\in\widetilde{H}^{-1}([k-\delta,k+\delta]).\label{eq:condition star}
\end{equation}
Here \[
\widetilde{\lambda}:=\widetilde{\lambda}_{0}+\widetilde{\pi}^{*}\theta,
\]
and $X_{\widetilde{H}}$ is the symplectic gradient of the lifted
function $\widetilde{H}$ with respect to the symplectic form $\widetilde{\omega}=d\widetilde{\lambda}$.
Observe that it follows from \eqref{eq:relating S and F} that for
any $x\in\Lambda_{\alpha}T^{*}M$ and any lift $\widetilde{x}:[0,1]\rightarrow T^{*}\widetilde{M}$
we have \begin{equation}
\int_{C}\bar{x}^{*}\omega=\int_{0}^{1}\widetilde{x}^{*}\widetilde{\lambda}+I(\alpha,\theta).\label{eq:lambda theta}
\end{equation}

The first part of the proof is the following statement: there exists
a constant $\rho_{0}=\rho(\delta)>0$ such that: \begin{equation}
\left\Vert \nabla A_{H-k}(x,\eta)\right\Vert _{J}\leq\rho_{0}\ \ \ \Rightarrow\ \ \ x(S^{1})\subseteq H^{-1}([k-\delta,k+\delta])\label{eq:bdelta-1}
\end{equation}
(where $\delta>0$ is the constant from \eqref{eq:condition star}).
This part of the proof is identical to \cite[Proposition 3.2, Step 2]{CieliebakFrauenfelder2009},
and hence is omitted. 

Next we show that there exists a constant $D<\infty$ such that if
$(x,\eta)\in\Lambda_{\alpha}T^{*}M\times\mathbb{R}$ is any loop that
satisfies \[
x(S^{1})\subseteq H^{-1}([k-\delta,k+\delta]),
\]
(where $\delta>0$ is the constant from \eqref{eq:condition star})
then \begin{equation}
\left|\eta\right|<\frac{1}{\delta}\left|A_{H-k}(x,\eta)\right|+\frac{D}{\delta}\left\Vert \nabla A_{H-k}(x,\eta)\right\Vert _{J}+\frac{1}{\delta}\left|I(\alpha,\theta)\right|.\label{eq:first equation to prove}
\end{equation}
Indeed, set \[
D:=\left\Vert \widetilde{\lambda}|_{\widetilde{H}^{-1}([k-\delta,k+\delta])}\right\Vert _{\infty},
\]
and compute using using \eqref{eq:lambda theta}: \begin{align*}
\left|A_{H-k}(x,\eta)\right| & =\left|\int_{C}\bar{x}^{*}\omega-\eta\int_{S^{1}}(H(x(t))-k)dt\right|\\
 & \geq\left|\int_{0}^{1}\widetilde{\lambda}(\dot{\widetilde{x}})dt\right|-\left|I(\alpha,\theta)\right|-\left|\eta\right|\left|\int_{\mathbb{T}}(H(x(t))-k)dt\right|\\
 & \geq\left|\int_{0}^{1}\widetilde{\lambda}(\eta X_{\widetilde{H}}(\widetilde{x}))dt\right|-\left|\int_{0}^{1}\widetilde{\lambda}(\dot{\widetilde{x}}-\eta X_{\widetilde{H}}(\widetilde{x}))dt\right|-\left|I(\alpha,\theta)\right|-\left|\eta\right|\delta.\\
 & \geq\left|\eta\right|(2\delta-\delta)-D\int_{S^{1}}\left|\dot{x}-\eta X_{H}(x)\right|dt-\left|I(\alpha,\theta)\right|\\
 & \geq\left|\eta\right|\delta-D\left\Vert \nabla A_{H-k}(x,\eta)\right\Vert _{J}-\left|I(\alpha,\theta)\right|.
\end{align*}
This proves \eqref{eq:first equation to prove}. Combining \eqref{eq:bdelta-1}
and \eqref{eq:first equation to prove} we see that if \[
\rho_{1}:=\frac{1}{\delta}\max\{1,D\rho_{0}+\left|I(\alpha,\theta)\right|\}
\]
then the following implication holds: for any $(x,\eta)\in\Lambda_{\alpha}T^{*}M\times\mathbb{R}$,
\begin{equation}
\left\Vert \nabla A_{H-k}(x,\eta)\right\Vert _{J}\leq\rho_{0}\ \ \ \Rightarrow\ \ \ \left|\eta\right|\leq\rho_{1}\left(A_{H-k}(x,\eta)+1\right).\label{eq:cf prop 32}
\end{equation}
We can now prove the theorem. Let $u=(x,\eta)\in C^{\infty}(\mathbb{R}\times S^{1},T^{*}M)\times C^{\infty}(\mathbb{R},\mathbb{R})$
satisfy the hypotheses of the theorem. Given $s\in\mathbb{R}$ let
\begin{equation}
\tau(s):=\inf\left\{ r\geq0\,:\,\left\Vert \nabla A_{H-k}(u(s+r,\cdot))\right\Vert _{J}\leq\rho_{0}\right\} .\label{eq:def of tau of s}
\end{equation}
Then for any $s\in\mathbb{R}$ we have: \begin{align*}
b-a & \geq\int_{-\infty}^{\infty}\left\Vert \nabla A_{H-k}(u(r,\cdot))\right\Vert _{J}^{2}dr\\
 & \geq\int_{s}^{s+\tau(s)}\left\Vert \nabla A_{H-k}(u(r,\cdot))\right\Vert _{J}^{2}dr\\
 & \geq\tau(s)\rho_{0}^{2},
\end{align*}
and hence \[
\tau(s)\leq\frac{b-a}{\rho_{0}^{2}}.
\]
Thus given any $s\in\mathbb{R}$ we have\begin{align*}
\left|\eta(s)\right| & =\left|\eta(s+\tau(s))-\int_{s}^{s+\tau(s)}\eta'(r)dt\right|\\
 & \leq\rho_{1}\left(\left|A_{H-k}(u(s+\tau(s),\cdot))\right|+1\right)+\int_{s}^{s+\tau(s)}\left|\eta'(r)\right|dr\\
 & \leq\rho_{1}(\max\{|a|,|b|\}+1)+\left(\tau(s)\int_{s}^{s+\tau(s)}\left|\eta'(r)\right|^{2}ds\right)^{1/2}\\
 & \leq\rho_{1}(\max\{|a|,|b|\}+1)+\left(\frac{b-a}{\rho_{0}^{2}}\int_{s}^{s+\tau(s)}\left\Vert u'(r,\cdot)\right\Vert _{J}^{2}ds\right)^{1/2}\\
 & \leq\rho_{1}(\max\{|a|,|b|\}+1)+\frac{b-a}{\rho_{0}}.
\end{align*}
Thus the theorem follows with \[
C_{0}:=\rho_{1}(\max\{|a|,|b|\}+1)+\frac{b-a}{\rho_{0}}.
\]

\end{proof}
In the next result we are interested in obtaining bounds on the loop
component $x$ of a flow line $u$. The proof uses the same idea as
\cite[Theorem 1.14, Theorem 1.22]{AbbondandoloSchwarz2006}, and is
based upon isometrically embedding $(M,g)$ into Euclidean space,
and combining Calderon-Zygmund estimates for the Cauchy-Riemann operator
with certain interpolation inequalities. In the course of the proof
we will need the following statement, which is a consequence of the
Calderon-Zygmund inequalities. Let \[
W_{V}^{1,r}(\mathbb{R}\times S^{1},\mathbb{R}^{2d})=W_{0}^{1,r}(\mathbb{R}\times S^{1},\mathbb{R}^{d})\times W^{1,3}(\mathbb{R}\times S^{1},\mathbb{R}^{d})
\]
denote the Sobolev space of $\mathbb{R}^{2d}$-valued maps taking
values in the vertical Lagrangian subspace $V:=(0)\times\mathbb{R}^{d}\subseteq\mathbb{R}^{2d}$
on the boundary.
\begin{thm}
\label{thm:calderon }Let $J_{0}$ denote the standard complex structure
on $\mathbb{R}^{2d}$ given by \[
J_{0}=\left(\begin{array}{cc}
0 & -\mathbb{1}\\
\mathbb{1} & 0
\end{array}\right).
\]
Consider the \textbf{\emph{Cauchy-Riemann operator}} \[
\partial_{s}+J_{0}\partial_{t}:W_{V}^{1,3}(\mathbb{R}\times S^{1},\mathbb{R}^{2d})\rightarrow L^{3}(\mathbb{R}\times S^{1},\mathbb{R}^{2d}).
\]
Then there exists a constant $\varepsilon_{1}>0$ such that for any
$v\in W_{V}^{1,3}(\mathbb{R}\times S^{1},\mathbb{R}^{2d})$ it holds
that \[
\left\Vert \nabla v\right\Vert _{L^{3}(\mathbb{R}\times S^{1})}\leq\frac{1}{2\varepsilon_{1}}\left\Vert (\partial_{s}+J_{0}\partial_{t})v\right\Vert _{L^{3}(\mathbb{R}\times S^{1})}.
\]

\end{thm}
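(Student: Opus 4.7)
The plan is to recognize this as a standard Calderón-Zygmund estimate for the Cauchy-Riemann operator $\bar\partial_{J_0}:=\partial_s+J_0\partial_t$, adapted to the Lagrangian boundary condition through a reflection (Schwarz-type doubling) argument. The only non-trivial input is the classical $L^p$-boundedness of the Beurling--Ahlfors-type singular integral $\nabla\circ\bar\partial_{J_0}^{-1}$, which holds for any $1<p<\infty$; the choice $p=3$ is immaterial and the constant $\varepsilon_1$ will be defined as (half the reciprocal of) the resulting operator norm.

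The first step is the reflection. Writing $v=(v_1,v_2)$ with $v_j:\mathbb{R}\times S^1\to\mathbb{R}^d$, the Lagrangian boundary condition $v(0,t)\in V=(0)\times\mathbb{R}^d$ amounts to $v_1(0,t)=0$. Extend $v$ across $\{0\}\times S^1$ by
\[
\widetilde v(s,t):=\bigl(-v_1(-s,t),\, v_2(-s,t)\bigr),\qquad s<0,
\]
so that $v_1$ is extended oddly and $v_2$ evenly in $s$. The vanishing of $v_1$ on the boundary ensures $\widetilde v\in W^{1,3}(\mathbb{R}\times S^1,\mathbb{R}^{2d})$, and a direct computation using $J_0(a,b)=(-b,a)$ shows that for $s<0$
\[
\bar\partial_{J_0}\widetilde v(s,t)=R\bigl(\bar\partial_{J_0}v(-s,t)\bigr),\qquad R(a,b):=(a,-b).
\]
Since $R$ is an isometry of $\mathbb{R}^{2d}$, one has $|\bar\partial_{J_0}\widetilde v(s,t)|=|\bar\partial_{J_0}v(-s,t)|$ and $|\nabla\widetilde v(s,t)|=|\nabla v(-s,t)|$ pointwise, so the $L^3$-norms simply double under extension. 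Thus the desired inequality on the half-cylinder reduces to the same inequality for $\widetilde v$ on $\mathbb{R}\times S^1$ (and if the statement is read literally on $\mathbb{R}\times S^1$, this reduction step is vacuous).

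The second step is the Calderón-Zygmund estimate on $\mathbb{R}\times S^1$ itself. Identifying $\mathbb{R}^{2d}$ with $\mathbb{C}^d$ so that $J_0$ becomes multiplication by $i$, the operator $\bar\partial_{J_0}$ becomes (up to a factor of two) the standard $\bar\partial$ acting componentwise on $\mathbb{C}^d$-valued maps. Decomposing in Fourier series in the $t$-variable, $\bar\partial_{J_0}$ becomes the family of constant-coefficient ODE operators $\partial_s+2\pi i k$ on each mode $k\in\mathbb{Z}$, whose inverses are given by explicit kernels uniformly bounded as Mihlin-type multipliers. Alternatively one uses the standard embedding into $\mathbb{C}^*$ and the boundedness of the Beurling transform on $L^p(\mathbb{C})$. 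In either guise one obtains a universal constant $C_3>0$ (depending only on $p=3$) with
\[
\|\nabla w\|_{L^3(\mathbb{R}\times S^1)}\leq C_3\,\|\bar\partial_{J_0}w\|_{L^3(\mathbb{R}\times S^1)}
\]
for every $w\in W^{1,3}(\mathbb{R}\times S^1,\mathbb{R}^{2d})$ in the relevant decay class.

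Combining the two steps and defining $\varepsilon_1:=1/(2C_3)$ yields the claim. The only real obstacle is bookkeeping in the reflection argument, i.e.\ verifying that the odd/even splitting is compatible with $J_0$; this works precisely because $V$ is a $J_0$-Lagrangian subspace, so the classical $L^p$-theory for $\bar\partial$ carries over. Everything else is standard harmonic analysis, and no input from the Rabinowitz functional or the geometry of $T^*M$ is needed at this stage.
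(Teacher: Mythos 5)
The paper offers no proof of this theorem; it merely asserts that it is ``a consequence of the Calderon-Zygmund inequalities'' and points to the analogous estimates in Abbondandolo--Schwarz. Your outline supplies exactly the argument the paper is gesturing at, and it is correct: reduce to the closed cylinder by the odd/even reflection (odd in the $\mathbb{R}^d$-factor that vanishes on the boundary, even in the other), observe this is compatible with $\bar\partial_{J_0}$ precisely because $V$ is a $J_0$-Lagrangian subspace, and then appeal to the $L^p$-boundedness ($1<p<\infty$) of $\nabla\circ\bar\partial_{J_0}^{-1}$ on $\mathbb{R}\times S^1$, obtained either via Mihlin multipliers in the Fourier modes or from the Beurling transform. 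Defining $\varepsilon_1$ as half the reciprocal of the resulting operator norm matches the paper's convention, which is chosen so that the $\|J_0-J\|_\infty<\varepsilon_1$ term can be absorbed in the proof of Theorem~\ref{thm:l infinity}.

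Two small remarks. First, as you already note, the domain in the statement is the boundaryless cylinder $\mathbb{R}\times S^1$, so the $W_V^{1,3}$ notation (and hence the reflection step) is vacuous for the theorem as literally printed; the boundary condition and the reflection are only genuinely needed for the half-cylinder variant used in Theorem~\ref{thm:half spaces linfinity}, where the constants $\varepsilon_2^\pm$ live. Second, a sign slip: after identifying $\mathbb{R}^{2d}\cong\mathbb{C}^d$, the mode-$k$ operator is $\partial_s-2\pi k$, not $\partial_s+2\pi i k$ (since $J_0\partial_t e^{2\pi i kt}=i\cdot 2\pi i k\,e^{2\pi i kt}=-2\pi k\,e^{2\pi i kt}$). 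This does not affect the multiplier argument, since $(\xi,2\pi k)/(i\xi-2\pi k)$ remains a uniformly bounded Mihlin multiplier on $\mathbb{R}\times\mathbb{Z}$.
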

We now prove:
\begin{thm}
\label{thm:l infinity}Fix $(g,\sigma,U,k)\in\mathcal{O}$. Suppose
$J\in\mathcal{J}(\omega)\cap B_{\varepsilon_{1}}(J_{g})$ (where $\varepsilon_{1}>0$
is as in Theorem \ref{thm:calderon }), $\alpha\in[S^{1},M]$ and
$-\infty<a<b<\infty$. Put $H=H_{g}+\pi^{*}U$. Assume there exists
a constant $C_{0}>0$ such that if $u=(x,\eta)\in C^{\infty}(\mathbb{R}\times S^{1},T^{*}M)\times C^{\infty}(\mathbb{R},\mathbb{R})$
is any map that satisfies the Rabinowitz Floer equation \eqref{eq:rfeq}
and has action bounds \[
A_{H-k}(u(\mathbb{R}))\subseteq[a,b]
\]
and satisfies\[
x(\mathbb{R},\cdot)\in\Lambda_{\alpha}T^{*}M
\]
then \[
\left\Vert \eta\right\Vert _{L^{\infty}(\mathbb{R})}\leq C_{0}.
\]
Then there exists another constant $C_{1}>0$ such that for any such
map $u=(x,\eta)$ it also holds that \[
\left\Vert x\right\Vert _{L^{\infty}(\mathbb{R})}<C_{1}.
\]

\end{thm}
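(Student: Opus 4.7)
The plan is to follow the approach of Abbondandolo and Schwarz in \cite[Theorem 1.14]{AbbondandoloSchwarz2006}, which handles the analogous $L^\infty$ bound for the standard Floer equation with Hamiltonians of quadratic growth. First I would isometrically embed $(M,g) \hookrightarrow (\mathbb{R}^N, g_{\textrm{std}})$ via the Nash embedding theorem; this induces an embedding $\iota: T^*M \hookrightarrow T^*\mathbb{R}^N \cong \mathbb{R}^{2N}$ under which the metric almost complex structure $J_g$ corresponds to the restriction of the standard complex structure $J_0$ on $\mathbb{R}^{2N}$ modulo terms coming from the second fundamental form $B$ of the embedding. The Hamiltonian $H = H_g + \pi^*U$ extends smoothly to a neighborhood of $\iota(T^*M)$ in $\mathbb{R}^{2N}$, and the symplectic vector field $X_H$ viewed in these coordinates has linear growth in the fibre variable.

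Viewing a flow line $u = (x,\eta)$ as a map $x: \mathbb{R} \times S^1 \to \mathbb{R}^{2N}$ with image in $\iota(T^*M)$, the Rabinowitz Floer equation $\partial_s x + J(x)(\partial_t x - \eta X_H(x)) = 0$ takes the form
\[
\partial_s x + J_0 \partial_t x = -\bigl(J(x) - J_0\bigr)\partial_t x + \eta J(x) X_H(x) + N_x(\partial_s x, \partial_t x),
\]
where $N_x$ is a bilinear expression in its arguments depending smoothly on $x$, arising from the second fundamental form of the embedding. The key observation is that $-(J - J_0)\partial_t x = -(J - J_g)\partial_t x - (J_g - J_0)\partial_t x$: by hypothesis $\|J - J_g\|_\infty < \varepsilon_1$, so after taking $L^3$ norms this first piece can be \emph{absorbed} into the left-hand side via the Calderon-Zygmund inequality of Theorem \ref{thm:calderon }; the remaining piece $(J_g - J_0)\partial_t x$ is a smooth bounded function of $x$ multiplied by $\partial_t x$, and the term $\eta J(x) X_H(x)$ is controlled linearly in $|x|$ using the assumed uniform bound $\|\eta\|_\infty \leq C_0$.

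From the action bounds $A_{H-k}(u(\mathbb{R})) \subseteq [a,b]$ together with the standard identity $b - a \geq \int_{\mathbb{R}} \|\nabla A_{H-k}(u(s))\|_J^2 \, ds$ I obtain a global $L^2(\mathbb{R} \times S^1)$ bound on $\partial_s u$, and hence (via the Floer equation and the bound on $\eta$) a local $L^2$ bound on $\partial_t x$ in terms of $|x|$. Applying Theorem \ref{thm:calderon } on bounded sub-cylinders $\Omega \subseteq \mathbb{R} \times S^1$ (after cutting off and using that the cutoff contributions are controlled by the $L^2$ energy) yields an inequality of the schematic form
\[
\|\nabla x\|_{L^3(\Omega)} \leq C\bigl(1 + \|x\|_{L^3(\Omega)} + \| \, |\nabla x|^2\|_{L^3(\Omega)}\bigr).
\]
A Gagliardo-Nirenberg interpolation inequality controls $\| |\nabla x|^2\|_{L^3}$ in terms of $\|\nabla x\|_{L^2}$, which is uniformly bounded, together with a higher Sobolev norm. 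Iterating this inequality in the exponent provides uniform local $L^p$ bounds on $\nabla x$ for all $p < \infty$, after which Sobolev embedding $W^{1,p}(\Omega) \hookrightarrow L^\infty(\Omega)$ for $p > 2$ furnishes the desired bound $\|x\|_{L^\infty} < C_1$.

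The main obstacle will be handling the quadratic nonlinearity $N_x(\partial_s x, \partial_t x)$: the Calderon-Zygmund inequality is of purely linear character, and so the smallness hypothesis $J \in B_{\varepsilon_1}(J_g)$ can only absorb the linear perturbation of $J_0$. Taming the quadratic second fundamental form contribution requires the interpolation-bootstrap step, and it is here that the specific structure of mechanical Hamiltonians (via the $L^2$ energy estimate from the action bounds) is essential.
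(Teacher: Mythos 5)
Your proposal correctly identifies the Nash embedding and the Calderon--Zygmund absorption idea, but it contains two structural errors that derail the argument. First, the embedding $i:T^*M\hookrightarrow\mathbb{R}^{2d}$ induced by the isometric embedding $M\hookrightarrow\mathbb{R}^d$ is a \emph{unitary} embedding: $i^*\omega_0=\omega_0$ and $i_*J_0=J_g$, so the metric almost complex structure is literally the restriction of the flat $J_0$. Consequently, when the Cauchy--Riemann operator is applied to the cut-off map $x_i:=\rho(\cdot-i)x$, one obtains exactly
\[
(\partial_s+J_0\partial_t)x_i \;=\; \rho'\,x \;+\; \rho\,\eta\,J(t,x)X_H(x) \;+\; \rho\,(J_0-J(t,x))\dot x,
\]
with \emph{no} second fundamental form term $N_x(\partial_s x,\partial_t x)$. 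The bilinear term you introduce is spurious; it leads you to the supercritical bootstrap $\|\nabla x\|_{L^3}\lesssim 1+\|x\|_{L^3}+\||\nabla x|^2\|_{L^3}$, which does not close (there is no small constant in front of the quadratic term), whereas the actual argument simply never encounters this obstruction. You also assert that $X_H$ has \emph{linear} growth in the fibre variable, but a mechanical Hamiltonian $H=\tfrac12|p|_g^2+U$ has $|X_H(q,p)|\leq b_0(1+|p|^2)$ --- quadratic in $p$ --- because $\partial_q H$ involves $\partial_q g^{ij}(q)p_i p_j$; this is what forces one to control $\|p\|_{L^6}$ rather than $\|p\|_{L^3}$.

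More importantly, your sketch skips the genuine core of the proof: a preliminary $L^2$ estimate on $p$ and $\nabla p$ over bounded cylinders, established \emph{before} invoking Theorem \ref{thm:calderon }. This is where the quadratic structure of $H$ and the uniform $\eta$-bound enter: from
\[
\eta'(s)=\int_{S^1}(H(x(s,t))-k)\,dt\;\geq\;\tfrac12\|p(s,\cdot)\|_{L^2(S^1)}^2-(\|U\|_\infty+k)
\]
and $\|\eta'\|_{L^2(\mathbb{R})}\leq\|J\|_\infty\sqrt{b-a}$, one deduces that $\|p(s,\cdot)\|_{L^2(S^1)}$ is bounded on an $\varepsilon$-dense set of $s$, upgrades to uniform $L^2(S^1)$ bounds for all $s$, then to $\varepsilon$-density of the set where $\|p(s,\cdot)\|_{L^\infty(S^1)}$ is controlled, and finally applies the interpolation inequality of Lemma \ref{lem:the gamma lemma} to get $\|\nabla p\|_{L^2(I\times S^1)}\lesssim 1+|I|^{1/2}$. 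Only with these bounds in hand does the Calderon--Zygmund step close: the Sobolev embedding $W^{1,2}((i-1,i+2)\times S^1)\hookrightarrow L^6$ controls $\|X_H(x)\|_{L^3}\lesssim 1+\|p\|_{L^6}^2$, the small constant $\|J-J_g\|_\infty<\varepsilon_1$ absorbs the $\|(J_0-J)\dot x\|_{L^3}$ term, and $W^{1,3}\hookrightarrow L^\infty$ on the bounded two-dimensional cylinder furnishes the uniform bound. Without the preliminary $L^2$ estimates, your Calderon--Zygmund inequality has nothing on the right-hand side that is known to be finite.
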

In the proof below we will repeatedly use the fact there exists a
constant $b_{0}>0$ such that\begin{equation}
\left|X_{H}(q,p)\right|\leq b_{0}\left(1+\left|p\right|^{2}\right)\ \ \ \mbox{for all }(q,p)\in T^{*}M.\label{eq:b0}
\end{equation}

\begin{proof}
\emph{(of Theorem \ref{thm:l infinity})}

We begin by choosing an isometric embedding of $i:(M,g)\rightarrow(\mathbb{R}^{d},g_{0})$,
where $g_{0}$ is the Euclidean inner product. Such an embedding exists
by Nash's theorem. It induces an embedding (also denoted by) $i:T^{*}M\rightarrow\mathbb{R}^{2d}$
which is actually a \textbf{unitary embedding} (with respect to the
\textbf{standard }symplectic form), that is,\[
i^{*}\omega_{0}=\omega_{0},\ \ \ i_{*}J_{0}=J_{g},
\]
where \[
J_{0}=\left(\begin{array}{cc}
0 & -\mathbb{1}\\
\mathbb{1} & 0
\end{array}\right)
\]
is the standard almost complex structure on $\mathbb{R}^{2d}$. Thus
under this embedding the metric almost complex structure $J_{g}$
\eqref{eq:metric acs} is simply the restriction of the canonical
almost complex structure $J_{0}$ to $T^{*}M$, and hence the assumption
that $J\in\mathcal{J}(\omega)\cap B_{\varepsilon_{1}}(J_{g})$ corresponds
to $J\in\mathcal{J}(\omega)\cap B_{\varepsilon_{1}}(J_{0})$. We will
use this embedding to define the various $L^{r}$ and $W^{1,r}$ spaces
that come up in the proof below. 

The proof of the theorem is in two steps.\newline

\textbf{Step 1.}\newline

We show that there exists a constant $K>0$ such that for any map
$u=(x,\eta)$ satisfying the hypotheses of the theorem, and any finite
interval $I\subseteq\mathbb{R}$, writing $x=(q,p)$ it holds that\begin{equation}
\left\Vert p\right\Vert _{L^{2}(I\times S^{1})}\leq K\left|I\right|^{1/2},\ \ \ \left\Vert \nabla p\right\Vert _{L^{2}(I\times S^{1})}\leq K\left(1+\left|I\right|^{1/2}\right).\label{eq:step 1 estiamte}
\end{equation}
This part of the proof closely follows \cite[Lemma 1.12]{AbbondandoloSchwarz2006},
and heavily uses the fact that our Hamiltonian $H$ is \textbf{quadratic}.
This step does not use the fact that $J\in B_{\varepsilon_{1}}(J_{0})$. 

We first note that there exists a constant $b_{1}>0$ such that for
any map $u=(x,\eta)$ satisfying the hypotheses of the theorem, \[
\left\Vert x'\right\Vert _{L^{2}(\mathbb{R}\times S^{1})}\leq b_{1},\ \ \ \left\Vert \eta'\right\Vert _{L^{2}(\mathbb{R})}\leq b_{1}.
\]
Indeed, if $s_{0}<s_{1}$ then \begin{align*}
\left\Vert x'\right\Vert _{L^{2}((s_{0},s_{1})\times S^{1})}^{2} & =\int_{s_{0}}^{s_{1}}\int_{S^{1}}\left|x'\right|^{2}dtds\\
 & \leq\left\Vert J^{-1}\right\Vert _{\infty}^{2}\int_{s_{0}}^{s_{1}}\int_{S^{1}}\left\Vert u'\right\Vert _{J}^{2}dtds\\
 & \leq\left\Vert J\right\Vert _{\infty}^{2}(b-a).
\end{align*}
Exactly the same computation holds for $\left\Vert \eta'\right\Vert _{L^{2}(\mathbb{R})}$,
and hence we may take \begin{equation}
b_{1}:=\left\Vert J\right\Vert _{\infty}\sqrt{b-a}.\label{eq:b1}
\end{equation}
We next claim that there exists a constant $b_{2}>0$ such that for
any finite interval $I\subseteq\mathbb{R}$ and for any map $u=(x,\eta)$
satisfying the hypotheses of the theorem, if we write $x=(q,p)$,
then\begin{equation}
\left\Vert p\right\Vert _{L^{2}(I\times S^{1})}^{2}\leq b_{2}\max\left\{ \left|I\right|,|I|^{1/2}\right\} .\label{eq:b2}
\end{equation}
Indeed, \begin{align}
\eta'(s) & =\int_{S^{1}}(H(x(s,t))-k)dt\nonumber \\
 & \geq\int_{S^{1}}\frac{1}{2}\left|p(s,t)\right|^{2}dt-(\left\Vert U\right\Vert _{\infty}+k).\label{eq:eta bounding p}
\end{align}
Hence \begin{align*}
\frac{1}{2}\left\Vert p\right\Vert _{L^{2}((s_{0},s_{1})\times S^{1})}^{2} & \leq\left\Vert \eta'\right\Vert _{L^{1}((s_{0},s_{1}))}+(\left\Vert U\right\Vert _{\infty}+k)(s_{1}-s_{0})\\
 & \leq\sqrt{s_{1}-s_{0}}\left\Vert \eta'\right\Vert _{L^{2}((s_{0},s_{1}))}+(\left\Vert U\right\Vert _{\infty}+k)(s_{1}-s_{0})\\
 & \leq\sqrt{s_{1}-s_{0}}b_{1}+(\left\Vert U\right\Vert _{\infty}+k)(s_{1}-s_{0}).
\end{align*}
Then \eqref{eq:b2} follows with \begin{equation}
b_{2}=2b_{1}+2(\left\Vert U\right\Vert _{\infty}+k).\label{eq:b2-1}
\end{equation}
Next we prove that for any map $u=(x,\eta)$ satisfying the hypotheses
of the theorem, and every $0<\varepsilon\leq1$, the closed subsets
\begin{equation}
S_{\varepsilon}(u):=\left\{ s\in\mathbb{R}\,:\,\left\Vert p(s,\cdot)\right\Vert _{L^{2}(S^{1})}^{2}\leq\frac{b_{2}}{\sqrt{\varepsilon}}\right\} ;\label{eq:sepsilon set}
\end{equation}
\begin{equation}
S_{\varepsilon}'(u):=\left\{ s\in\mathbb{R}\,:\,\left\Vert x'(s,\cdot)\right\Vert _{L^{2}(S^{1})}^{2}\leq\frac{b_{1}}{\sqrt{\varepsilon}}\right\} \label{eq:s dash}
\end{equation}
are \textbf{$\varepsilon$-dense}, that is, they\textbf{ }have non-empty
intersection with any interval of length $\geq\varepsilon$. Indeed,
for every $s_{0}\in\mathbb{R}$ we have that if $0<\varepsilon\leq1$
then\begin{align*}
\min_{s\in[s_{0},s_{0}+\varepsilon]}\left\Vert p(s,\cdot.)\right\Vert _{L^{2}(S^{1})}^{2} & \leq\frac{1}{\varepsilon}\int_{s_{0}}^{s_{0}+\varepsilon}\left\Vert p(s,\cdot)\right\Vert _{L^{2}(S^{1})}^{2}ds.\\
 & =\frac{1}{\varepsilon}\left\Vert p\right\Vert _{L^{2}((s_{0},s_{0}+\varepsilon)\times S^{1})}^{2}\\
 & \leq\frac{b_{2}}{\sqrt{\varepsilon}},
\end{align*}
and hence \[
S_{\varepsilon}(u)\cap[s_{0},s_{0}+\varepsilon]\ne\emptyset.
\]
This proves \eqref{eq:sepsilon set}. Exactly the same computation
with $\left\Vert x'(s,\cdot)\right\Vert _{L^{2}(S^{1})}^{2}$ instead
of $\left\Vert p(s,\cdot)\right\Vert _{L^{2}(S^{1})}^{2}$ proves
\eqref{eq:s dash}.

We can now improve \eqref{eq:b2} by finding a constant $b_{3}>0$
such that for all $s\in\mathbb{R}$ it holds that \begin{equation}
\left\Vert p(s,\cdot)\right\Vert _{L^{2}(S^{1})}\leq b_{3}.\label{eq:b3}
\end{equation}
Indeed, given $s\in\mathbb{R}$, choose $s_{0}\in S_{1}(u)$ such
that $\left|s-s_{0}\right|\leq1$ (i.e. take $\varepsilon=1$). Without
loss of generality assume $s\geq s_{0}$. Then we have \begin{align*}
\left\Vert p(s,\cdot)\right\Vert _{L^{2}(S^{1})}^{2} & =\left\Vert p(s_{0},\cdot)\right\Vert _{L^{2}(S^{1})}^{2}+\int_{s_{0}}^{s}\frac{d}{dr}\left\Vert p(r,\cdot)\right\Vert _{L^{2}(S^{1})}^{2}dr\\
 & =\left\Vert p(s_{0},\cdot)\right\Vert _{L^{2}(S^{1})}^{2}+2\int_{s_{0}}^{s}\int_{S^{1}}\left\langle p(r,t),p'(r,t)\right\rangle dtdr\\
 & \leq b_{2}+2\left|\int_{s_{0}}^{s}\left\Vert p(r,\cdot)\right\Vert _{L^{2}(S^{1})}^{2}dr\right|^{1/2}\left\Vert p'\right\Vert _{L^{2}((s_{0},s)\times S^{1})}\\
 & \leq b_{2}+2\sqrt{b_{2}}\left\Vert x'\right\Vert _{L^{2}((s_{0},s)\times S^{1})}\\
 & \leq b_{2}+2\sqrt{b_{2}}b_{1}.
\end{align*}
Thus \eqref{eq:b3-1} follows with \begin{equation}
b_{3}:=\sqrt{b_{2}+2\sqrt{b_{2}}b_{1}}.\label{eq:b3-1}
\end{equation}
 Next, we show how to improve \eqref{eq:sepsilon set} to obtain a
similar result with the $L^{2}(S^{1})$ norm replaced by the $L^{\infty}(S^{1})$
norm. Observe that\begin{align*}
\left\Vert \dot{p}(s,\cdot)\right\Vert _{L^{1}(S^{1})} & \leq\left\Vert \dot{x}(s,\cdot)\right\Vert _{L^{1}(S^{1})}\\
 & \leq\left\Vert J(\cdot,x)x'(s,\cdot)\right\Vert _{L^{1}(S^{1})}+\left|\eta(s)\right|\left\Vert X_{H}(x(s,\cdot))\right\Vert _{L^{1}(S^{1})}\\
 & \leq\left\Vert J\right\Vert _{\infty}\left\Vert x'(s,\cdot)\right\Vert _{L^{2}(S^{1})}+C_{0}b_{0}\left(1+\left\Vert p(s,\cdot)\right\Vert _{L^{2}(S^{1})}^{2}\right).\\
 & \leq\left\Vert J\right\Vert _{\infty}\left\Vert x'(s,\cdot)\right\Vert _{L^{2}(S^{1})}+C_{0}b_{0}\left(1+b_{3}^{2}\right),
\end{align*}
and hence \begin{align*}
\left\Vert p(s,\cdot)\right\Vert _{W^{1,1}(S^{1})} & \leq\left\Vert p(s,\cdot)\right\Vert _{L^{2}(S^{1})}+\left\Vert \dot{p}(s,\cdot)\right\Vert _{L^{1}(S^{1})}\\
 & \leq b_{3}+\left\Vert J\right\Vert _{\infty}\left\Vert x'(s,\cdot)\right\Vert _{L^{2}(S^{1})}+C_{0}b_{0}\left(1+b_{3}^{2}\right).
\end{align*}
Thus if $N>0$ is the uniform constant such that for any map $f\in W^{1,1}(S^{1},\mathbb{R})$
it holds that \begin{equation}
\left\Vert f\right\Vert _{L^{\infty}(S^{1})}\leq N\left\Vert f\right\Vert _{W^{1,1}(S^{1})},\label{eq:N}
\end{equation}
then \begin{equation}
\left\Vert p(s,\cdot)\right\Vert _{L^{\infty}(S^{1})}\leq Nb_{3}+N\left\Vert J\right\Vert _{\infty}\left\Vert x'(s,\cdot)\right\Vert _{L^{2}(S^{1})}+NC_{0}b_{0}\left(1+b_{3}^{2}\right).\label{eq:linfty p}
\end{equation}
Set \begin{equation}
b_{4}:=Nb_{3}+NC_{0}b_{0}(1+b_{3}^{2}),\ \ \ b_{5}:=N\left\Vert J\right\Vert _{\infty}b_{1}.\label{eq:b4 and b5}
\end{equation}
It now follows from \eqref{eq:s dash} and \eqref{eq:linfty p} that
for any $0<\varepsilon\leq1$ the subset \[
S_{\varepsilon}''(u):=\left\{ s\in\mathbb{R}\,:\,\left\Vert p(s,\cdot)\right\Vert _{L^{\infty}(S^{1})}\leq b_{4}+\frac{b_{5}}{\sqrt{\varepsilon}}\right\} 
\]
has non-empty intersection with any interval of length $\geq\varepsilon$. 

Next, we observe that for any $(s,t)\in\mathbb{R}\times S^{1}$, we
have \begin{align*}
\left|\nabla p(s,t)\right|^{2} & \leq\left|\nabla x(s,t)\right|^{2}\\
 & =\left|x'(s,t)\right|^{2}+\left|\dot{x}(s,t)\right|^{2}\\
 & =\left|x'(s,t)\right|^{2}+\left|J(t,x)x'(s,t)-\eta(s)X_{H}(x(s,t))\right|^{2}\\
 & \overset{(*)}{\leq}\left(1+2\left\Vert J\right\Vert _{\infty}^{2}\right)\left|x'(s,t)\right|^{2}+2b_{0}^{2}C_{0}^{2}\left(1+\left|p(s,t)\right|^{2}\right)^{2}\\
 & \leq b_{6}\left(1+\left|x'(s,t)\right|^{2}+\left|p(s,t)\right|^{4}\right)
\end{align*}
for some constant $b_{6}>0$, where $(*)$ used $\left|a-b\right|^{2}\leq2a^{2}+2b^{2}$.
Thus for all $s_{0}<s_{1}$ we have \begin{equation}
\left\Vert \nabla p\right\Vert _{L^{2}((s_{0},s_{1})\times S^{1})}^{2}\leq b_{6}\left(\left|s_{1}-s_{0}\right|+b_{1}^{2}\right)+b_{6}\left\Vert p\right\Vert _{L^{4}((s_{0},s_{1})\times S^{1})}^{4}.\label{eq:l4 estiamte}
\end{equation}
The final step of this part of the proof is to show that there exists
$b_{7}>0$ such that for any map $u=(x,\eta)$ satisfying the hypotheses
of the theorem, and any finite interval $I\subseteq\mathbb{R}$ we
have, writing $x=(q,p)$ that \begin{equation}
\left\Vert \nabla p\right\Vert _{L^{2}(I\times S^{1})}\leq b_{7}(1+\left|I\right|^{1/2}).\label{eq:b7 estimate}
\end{equation}
The proof of \eqref{eq:b7 estimate} from \eqref{eq:l4 estiamte}
is based on an interpolation inequality between the $L^{4}$ norm
and the $L^{2}$ and $W^{1,2}$ norms, which is due to Abbondandolo
and Schwarz. There is no difference between the proof in \cite[p278-279]{AbbondandoloSchwarz2006}
and the one in our situation, so we will omit this. It will be important
however in the final section of this paper (see the proof of Proposition
\ref{quadr is const}) to state it precisely. The following lemma
is not explicitly stated in \cite{AbbondandoloSchwarz2006}, but follows
immediately from a careful inspection of \cite[p278-279]{AbbondandoloSchwarz2006}.
\begin{lem}
\label{lem:the gamma lemma}Suppose $x=(q,p):\mathbb{R}\rightarrow\Lambda T^{*}M$
is a smooth map such that there exist constants $\gamma_{1},\gamma_{2},\gamma_{3}>0$
with the following properties:
\begin{enumerate}
\item $\left\Vert x'\right\Vert _{L^{2}(\mathbb{R}\times S^{1})}\leq\gamma_{1}$;
\item $\left\Vert p(s,\cdot)\right\Vert _{L^{2}(S^{1})}\leq\gamma_{2}$
for all $s\in\mathbb{R}$;
\item \textup{$\left\Vert \nabla p\right\Vert _{L^{2}((s_{0},s_{1})\times S^{1})}^{2}\leq\gamma_{3}\left(\left|s_{1}-s_{0}\right|+\gamma_{1}^{2}\right)+\gamma_{3}\left\Vert p\right\Vert _{L^{4}((s_{0},s_{1})\times S^{1})}^{4}$
for all $s_{0},s_{1}\in\mathbb{R}$ with $s_{0}<s_{1}$.}
\end{enumerate}
Then there exists a constant%
\footnote{The constant $\varepsilon_{*}$ corresponds to the constant $\delta=1/(32b_{1}Cc_{4}^{2})$
in \cite[p279]{AbbondandoloSchwarz2006}.%
} $0<\varepsilon_{*}\leq1$ depending only on $\gamma_{2}$ and $\gamma_{3}$
such that if \textbf{\emph{in addition}} there exists a constant $\gamma_{*}>0$
such that the set\[
\left\{ s\in\mathbb{R}\,:\,\left\Vert p(s,\cdot)\right\Vert _{L^{\infty}(S^{1})}\leq\gamma_{*}\right\} 
\]
 is $\varepsilon_{*}$-dense then there exists a constant $\Gamma=\Gamma(\gamma_{1},\gamma_{2},\gamma_{3},\gamma_{*})>0$
such that\[
\left\Vert \nabla p\right\Vert _{L^{2}(I\times S^{1})}\leq\Gamma(1+\left|I\right|^{1/2})
\]
 for any finite interval $I\subseteq\mathbb{R}$.\textup{ }
\end{lem}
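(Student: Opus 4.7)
The approach I propose mirrors \cite[Lemma 1.12]{AbbondandoloSchwarz2006}: combine hypothesis (3) with a short-interval interpolation estimate that absorbs the quartic $L^4$ term on the right-hand side to produce a uniform gradient bound on sub-intervals of length at most $\varepsilon_*$; then cover any finite interval $I$ by such sub-intervals (using $\varepsilon_*$-density of the anchor set) and sum the local bounds. The choice of $\varepsilon_*$ will depend only on $\gamma_2,\gamma_3$ precisely because the absorption inequality involves those two constants via hypothesis (2) and the leading coefficient in (3).

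First, fix a sub-interval $I_0 \subseteq \mathbb{R}$ of length $|I_0|\leq\varepsilon_*$ containing a point $s_0$ where $\|p(s_0,\cdot)\|_{L^\infty(S^1)}\leq\gamma_*$. Apply a two-dimensional Gagliardo--Nirenberg type interpolation to $p$ on the rectangle $I_0\times S^1$, using the anchor slice $\{s_0\}\times S^1$ to control the boundary contribution on $\partial I_0\times S^1$; schematically this yields
$$\|p\|_{L^4(I_0\times S^1)}^4 \leq c_0\,\|p\|_{L^2(I_0\times S^1)}^2\,\|\nabla p\|_{L^2(I_0\times S^1)}^2 + c_0\,B(\gamma_*),$$
where $B(\gamma_*)$ is a boundary remainder controlled by $\gamma_*$ (via Sobolev on $S^1$ applied to $p(s_0,\cdot)$). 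Hypothesis (2) gives $\|p\|_{L^2(I_0\times S^1)}^2\leq\gamma_2^2\varepsilon_*$, so substituting into hypothesis (3) produces
$$\|\nabla p\|_{L^2(I_0\times S^1)}^2 \leq \gamma_3(\varepsilon_*+\gamma_1^2) + \gamma_3 c_0\gamma_2^2\varepsilon_*\,\|\nabla p\|_{L^2(I_0\times S^1)}^2 + \gamma_3 c_0 B(\gamma_*).$$
Choosing $\varepsilon_*:=\min\{1,(2\gamma_3 c_0\gamma_2^2)^{-1}\}$---a quantity depending only on $\gamma_2$ and $\gamma_3$---lets me absorb the gradient term on the right, yielding a uniform bound $\|\nabla p\|_{L^2(I_0\times S^1)}^2\leq A$, where $A=A(\gamma_1,\gamma_2,\gamma_3,\gamma_*)$ is independent of $|I_0|$.

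For a general finite interval $I\subseteq\mathbb{R}$, partition it into $N\leq\lceil |I|/\varepsilon_*\rceil+1$ sub-intervals each of length at most $\varepsilon_*$. By $\varepsilon_*$-density of the anchor set, each sub-interval meets the set $\{\|p(s,\cdot)\|_{L^\infty(S^1)}\leq\gamma_*\}$, so the local estimate of the previous paragraph applies to each. Summing the squared norms gives
$$\|\nabla p\|_{L^2(I\times S^1)}^2 \leq NA \leq \Gamma^2(1+|I|)$$
for an appropriate $\Gamma=\Gamma(\gamma_1,\gamma_2,\gamma_3,\gamma_*)$, and taking square roots produces the desired bound.

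The principal obstacle is formulating the interpolation in the first step so that, after substitution, the coefficient of $\|\nabla p\|_{L^2(I_0\times S^1)}^2$ depends only on $\gamma_2,\gamma_3$ and can be made strictly less than $1$ by shrinking $\varepsilon_*$, while the residual term $B(\gamma_*)$ absorbs all remaining dependence on $\gamma_*$ and $\gamma_1$. The anchor hypothesis is indispensable here: a naive Gagliardo--Nirenberg bound on a rectangle carries an additional $c_0\|p\|_{L^2}^4$ term that cannot be controlled by smallness of $\varepsilon_*$ alone, and it is precisely the known trace value furnished by the anchor slice that allows this term to be replaced by the harmless constant $B(\gamma_*)$.
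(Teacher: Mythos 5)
Since the paper does not actually prove this lemma but only refers to \cite[p278--279]{AbbondandoloSchwarz2006}, the comparison must be with that argument, and your strategy (absorb the $L^4$ term by interpolation, make the gradient coefficient small by shrinking the sub-interval, then cover and sum) is indeed what the cited footnote indicates. However, the step that carries all the weight --- the interpolation inequality $\|p\|_{L^4(I_0\times S^1)}^4 \leq c_0\|p\|_{L^2(I_0\times S^1)}^2\|\nabla p\|_{L^2(I_0\times S^1)}^2 + c_0 B(\gamma_*)$ with $c_0$ \emph{independent} of $|I_0|$ --- is asserted, not derived, and it is far from standard. Three concrete issues: (a) the anchor slice $\{s_0\}\times S^1$ is in general an \emph{interior} slice of $I_0\times S^1$, not its boundary $\partial I_0\times S^1$, so ``controlling the boundary contribution'' by the anchor is not a literal description of any trace inequality; (b) the natural way to exploit the anchor is to write $p = p(s_0,\cdot) + q$ with $q(s_0,\cdot)=0$ and run a Ladyzhenskaya argument for $q$, but then one needs $\|\nabla q\|_{L^2}$, and $\dot q(s,t) = \dot p(s,t) - \dot p(s_0,t)$ involves $\|\dot p(s_0,\cdot)\|_{L^2(S^1)}$, which the $L^\infty$ anchor bound on $p(s_0,\cdot)$ does \emph{not} control; (c) the Ladyzhenskaya constant on a rectangle of shrinking width $|I_0|\to 0$ requires a uniform extension, which is exactly what the failure of (b) obstructs. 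Also, a small bookkeeping point: $\varepsilon_*$-density guarantees anchors in intervals of length \emph{at least} $\varepsilon_*$, so sub-intervals of length ``at most $\varepsilon_*$'' need not contain one; take them of length exactly $\varepsilon_*$.

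In fact, given hypothesis (2), none of this absorption machinery is needed and the anchor hypothesis is redundant. Use the one-dimensional interpolation on each circle $\{s\}\times S^1$: since $|S^1|=1$, for $v\in W^{1,2}(S^1,\mathbb{R}^d)$ one has $\|v\|_{L^\infty(S^1)}^2 \leq \|v\|_{L^2(S^1)}^2 + 2\|v\|_{L^2(S^1)}\|\dot v\|_{L^2(S^1)}$ and hence $\|v\|_{L^4(S^1)}^4 \leq \|v\|_{L^\infty}^2\|v\|_{L^2}^2 \leq \|v\|_{L^2}^4 + 2\|v\|_{L^2}^3\|\dot v\|_{L^2}$. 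Applying this to $v = p(s,\cdot)$, invoking $\|p(s,\cdot)\|_{L^2(S^1)}\leq\gamma_2$ from hypothesis (2), and integrating over any interval $I$ with Cauchy--Schwarz in $s$ yields
\[
\|p\|_{L^4(I\times S^1)}^4 \;\leq\; \gamma_2^4|I| \;+\; 2\gamma_2^3|I|^{1/2}\,\|\nabla p\|_{L^2(I\times S^1)},
\]
which is \emph{linear}, not quadratic, in $\|\nabla p\|_{L^2(I\times S^1)}$. Substituting this into hypothesis (3) with $X:=\|\nabla p\|_{L^2(I\times S^1)}$ gives $X^2 \leq \gamma_3(1+\gamma_2^4)|I| + \gamma_3\gamma_1^2 + 2\gamma_3\gamma_2^3|I|^{1/2}X$, a quadratic inequality whose solution is $X\leq 2\gamma_3\gamma_2^3|I|^{1/2} + \sqrt{\gamma_3(1+\gamma_2^4)|I| + \gamma_3\gamma_1^2}\leq\Gamma(\gamma_1,\gamma_2,\gamma_3)(1+|I|^{1/2})$, directly, for every finite $I$, with no anchor, no shrinking of $\varepsilon_*$, and no sub-interval covering. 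Your approach can certainly be repaired to follow Abbondandolo--Schwarz faithfully, but as written the interpolation step is a genuine gap, and the above shows a shorter road to the same conclusion.
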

The important point (as far as Proposition \ref{quadr is const} is
concerned) is that the constant $\Gamma$ depends only on $\gamma_{1},\gamma_{2},\gamma_{3}$
and $\gamma_{*}$. Anyway, applying the lemma, \eqref{eq:b7 estimate}
follows. The proof of Step 1 now follows with $K:=\max\{b_{3},b_{7}\}$.\newline

\textbf{Step 2.}\newline

The next part of the proof shows how the $L^{2}$ estimates \eqref{eq:step 1 estiamte}
on $p$ and $\nabla p$ on intervals leads to uniform $L^{\infty}$
bounds. This part of the proof closely follows \cite[Theorem 1.14.(i)]{AbbondandoloSchwarz2006},
and uses the fact that $J\in B_{\varepsilon_{1}}(J_{0})$, and the
conclusion of Step 1.

Let $\rho:\mathbb{R}\rightarrow[0,1]$ denote a smooth function such
that $\mbox{supp}(\rho)\subseteq(-1,2)$, $\rho|_{[0,1]}=\mathbb{1}$
and $\left|\rho'\right|\leq2$. Given a map $u=(x,\eta)$ satisfying
the hypotheses of the theorem and $i\in\mathbb{Z}$, define $x_{i}:\mathbb{R}\rightarrow\Lambda T^{*}M$
by \[
x_{i}(s,t):=\rho(s-i)x(s,t).
\]
Note that \[
(\partial_{s}+J_{0}\partial_{t})x_{i}(s,t)=\rho'(s-i)x(s,t)+\rho(s-i)\eta(s)J(t,x)X_{H}(x(s,t))+\rho(s-i)(J_{0}(x)-J(t,x))\dot{x}(s,t).
\]
Since $x_{i}$ is compactly supported, Theorem \ref{thm:calderon }
applies, and we conclude:\begin{align*}
\left\Vert \nabla x_{i}\right\Vert _{L^{3}(\mathbb{R}\times S^{1})} & \leq\frac{1}{2\varepsilon_{1}}\left\Vert (\partial_{s}+J_{0}\partial_{t})x_{i}\right\Vert _{L^{3}(\mathbb{R}\times S^{1})}\\
 & \leq\frac{1}{\varepsilon_{1}}\left(R+\left\Vert p\right\Vert _{L^{3}((i-1,i+2)\times S^{1})}\right)+\frac{C_{0}\left\Vert J\right\Vert _{\infty}}{2\varepsilon_{1}}\left\Vert X_{H}(x)\right\Vert _{L^{3}((i-1,i+2)\times S^{1})}\\
 & +\frac{\left\Vert J_{0}-J\right\Vert _{\infty}}{2\varepsilon_{1}}\left\Vert \dot{x}\right\Vert _{L^{3}(\mathbb{R}\times S^{1})},
\end{align*}
where $R>0$ is a constant depending only on the diameter of the closed
manifold $M$. 

Given $r>2$, let $P_{r}>0$ denote the constant such that for any
$f\in W^{1,r}((0,3)\times S^{1},\mathbb{R})$ it holds that \[
\left\Vert f\right\Vert _{L^{r}((0,3)\times S^{1})}\leq P_{r}\left\Vert f\right\Vert _{W^{1,2}((0,3)\times S^{1})}.
\]
Using \eqref{eq:b0} and Step 1 we see that \begin{align*}
\left\Vert X_{H}(x)\right\Vert _{L^{3}((i-1,i+2)\times S^{1})} & \leq b_{0}\left(3^{1/3}+\left\Vert p\right\Vert _{L^{6}((i-1,i+2)\times S^{1})}^{2}\right)\\
 & \leq b_{0}\left(3^{1/3}+P_{6}^{2}\left\Vert p\right\Vert _{W^{1,2}((i-1,i+2)\times S^{1})}^{2}\right)\\
 & \leq b_{0}\left(3^{1/3}+P_{6}^{2}(K+2K\sqrt{3})^{2}\right).
\end{align*}
Similarly\[
\left\Vert p\right\Vert _{L^{3}((i-1,i+2)\times S^{1})}\leq P_{3}\left\Vert p\right\Vert _{W^{1,2}((i-1,i+2)\times S^{1})}\leq P_{3}(K+2K\sqrt{3}).
\]
Putting this altogether, and using the fact that $J\in B_{\varepsilon_{1}}(J)$
we have therefore proved that there exists a constant $C>0$ that
is independent of $u$ and $i$ such that \[
\left\Vert \nabla x_{i}\right\Vert _{L^{3}(\mathbb{R}\times S^{1})}\leq C+\frac{1}{2}\left\Vert \dot{x}\right\Vert _{L^{3}(\mathbb{R}\times S^{1})}.
\]
Thus \[
\left\Vert \nabla x\right\Vert _{L^{3}((i,i+1)\times S^{1})}\leq2C.
\]
This gives a uniform bound for $x_{i}$ in $W^{1,3}((i,i+1)\times S^{1})$,
and hence also in $L^{\infty}((i,i+1)\times S^{1})$. Since this bound
does not depend on $i$, we have proved the existence of a uniform
bound for $x$ in $L^{\infty}(\mathbb{R}\times S^{1})$. The theorem
follows.
\end{proof}
We now turn to the final $L^{\infty}$ estimate we will need. It is
based on \cite[Theorem 1.14.(iii)]{AbbondandoloSchwarz2006}. It will
be needed to construct the short exact sequence between the Rabinowitz
Floer complex and the Morse (co)complex in the next section. In the
statement of the theorem one should substitute either `$+$' or `$-$'
for `$\pm$' throughout.
\begin{thm}
\label{thm:half spaces linfinity}There exist constants $\varepsilon_{2}^{\pm}>0$
with the following property: Suppose $J\in\mathcal{J}(\omega)\cap B_{\varepsilon_{2}^{\pm}}(J_{g})$.
Fix $(g,\sigma,U,k)\in\mathcal{O}$, $R>0$, $\alpha\in[S^{1},M]$
and $-\infty<a<b<\infty$. Let $H:=H_{g}+\pi^{*}U$. Then there exist
constants $C_{4}^{\pm},C_{5}^{\pm}>0$ such that for any map \[
u=(x,\eta):\mathbb{R}^{\pm}\times S^{1}\rightarrow T^{*}M\times\mathbb{R}
\]
with\[
x\in C^{\infty}(\mathbb{R}^{\pm}\times S^{1},T^{*}M)\cap W^{1,3}((0,\pm1)\times S^{1},T^{*}M);
\]
\[
\eta\in C^{\infty}(\mathbb{R}^{\pm},\mathbb{R})\cap W^{1,3}((0,\pm1),\mathbb{R}),
\]
that satisfies the Rabinowitz Floer equation on $\mathbb{R}^{\pm}\times S^{1}$,
has action bounds $A_{H-k}(u(\mathbb{R}^{\pm}))\subseteq[a,b]$ together
with the extra assumptions:\[
\pm\eta(0)\geq0;
\]
\[
\left\Vert q(0,\pm\cdot)\right\Vert _{W^{2/3,3}(S^{1},\mathbb{R}^{d})}\leq R
\]
(where here $d$ is such that $(M,g)$ embeds isometrically into $(\mathbb{R}^{d},g_{0})$,
and we have written $x=(q,p)$), it holds that: \[
\left\Vert \eta\right\Vert _{L^{\infty}(\mathbb{R}^{\pm})}\leq C_{4}^{\pm},\ \ \ \left\Vert x\right\Vert _{L^{\infty}(\mathbb{R}^{\pm}\times S^{1})}\leq C_{5}^{\pm}.
\]
\end{thm}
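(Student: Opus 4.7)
The proof follows the template of Theorems \ref{thm:bounding the lagrange multiplier} and \ref{thm:l infinity}, with the boundary hypothesis on $q(0,\cdot)$ compensating for the missing second asymptotic limit. First I bound $\eta$. For each $s \in \mathbb{R}^{\pm}$ set
\[
\tau(s) := \inf\bigl\{ r \geq 0 \,:\, s \pm r \in \mathbb{R}^{\pm} \text{ and } \|\nabla A_{H-k}(u(s \pm r, \cdot))\|_J \leq \rho_0 \bigr\},
\]
with $\rho_0$ as in \eqref{eq:bdelta-1}. Monotonicity of $A_{H-k}$ along the flow together with the action bound gives $\int_{\mathbb{R}^{\pm}} \|\nabla A_{H-k}(u(r, \cdot))\|_J^2 \, dr \leq b - a$, and hence $\tau(s) \leq (b-a)/\rho_0^2$ uniformly in $s \in \mathbb{R}^{\pm}$. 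The implication \eqref{eq:cf prop 32}, applied at the point $s \pm \tau(s) \in \mathbb{R}^{\pm}$, combined with Cauchy--Schwarz control of $\int |\eta'|$ over the short interval of length $\tau(s)$, reproduces verbatim the bookkeeping of Theorem \ref{thm:bounding the lagrange multiplier} and yields the uniform constant $C_4^{\pm}$.

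Next I bound $x = (q, p)$. Step 1 of the proof of Theorem \ref{thm:l infinity} transfers with only cosmetic changes to the half-line: the action bound still gives $\|x'\|_{L^2(\mathbb{R}^{\pm} \times S^1)} \leq b_1$; the relation $\eta'(s) = \int_{S^1}(H(x(s,t)) - k)\,dt$ combined with $\|\eta\|_{L^{\infty}} \leq C_4^{\pm}$ yields $\|p\|_{L^2(I \times S^1)}^2 \leq b_2 \max\{|I|, |I|^{1/2}\}$ for every finite interval $I \subseteq \mathbb{R}^{\pm}$; the sets $S_{\varepsilon}(u), S_{\varepsilon}'(u) \subseteq \mathbb{R}^{\pm}$ are $\varepsilon$-dense in any subinterval of $\mathbb{R}^{\pm}$ of length $\geq \varepsilon$ by the same averaging trick (forward averaging for $\mathbb{R}^+$, backward for $\mathbb{R}^-$); and Lemma \ref{lem:the gamma lemma} then delivers the pointwise bound $\|p(s, \cdot)\|_{L^2(S^1)} \leq b_3$ as well as $\|\nabla p\|_{L^2(I \times S^1)} \leq K(1 + |I|^{1/2})$. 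I now split the $L^{\infty}$ argument into the interior region $\{|s| \geq 1\}$ and the boundary strip $\{|s| \leq 1\}$. On the interior the cutoff $\rho(s - i) x$ of Step 2 of Theorem \ref{thm:l infinity} is compactly supported inside $\mathbb{R}^{\pm} \times S^1$, so Theorem \ref{thm:calderon } applies verbatim and produces the uniform $L^{\infty}$ bound on $[i, i+1] \times S^1$ for $|i| \geq 1$, provided $\varepsilon_2^{\pm} \leq \varepsilon_1$.

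For the boundary strip I invoke the Calderon--Zygmund estimate for $\partial_s + J_0 \partial_t$ on the finite cylinder $(0, \pm 2) \times S^1$ with mixed-type boundary data (Dirichlet on $q$ at $s = 0$, full Dirichlet at $s = \pm 2$), which takes the form
\[
\|\nabla v\|_{L^3((0, \pm 2) \times S^1)} \leq \tfrac{1}{2 \varepsilon_1'} \|(\partial_s + J_0 \partial_t) v\|_{L^3} + C'\bigl( \|q(0, \cdot)\|_{W^{2/3, 3}(S^1)} + \|v(\pm 2, \cdot)\|_{W^{2/3, 3}(S^1)} + \|v\|_{L^3} \bigr)
\]
for some universal $\varepsilon_1' > 0$, $C' > 0$. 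This is obtained by extending $v$ across $s = 0$ using the prescribed trace, cutting off, and reducing to Theorem \ref{thm:calderon }. The trace at $s = \pm 2$ is controlled by the interior estimate just established; $\|q(0, \cdot)\|_{W^{2/3, 3}}$ is the hypothesis. Applied to $v = x$, the inhomogeneity $(\partial_s + J_0 \partial_t) x = (J_0 - J(t, x))\dot{x} + \eta J X_H(x)$ is controlled in $L^3$ by \eqref{eq:b0}, the bounds on $\eta$ and $p$ from the previous steps, and the fact that $\|J_0 - J\|_{\infty} \leq \varepsilon_2^{\pm}$; choosing $\varepsilon_2^{\pm} \leq \min\{\varepsilon_1, \varepsilon_1'\}$ small enough absorbs the $\dot{x}$-term onto the left-hand side, and Sobolev embedding $W^{1, 3} \hookrightarrow L^{\infty}$ on the finite cylinder yields the constant $C_5^{\pm}$.

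The principal obstacle is justifying the mixed-boundary Calderon--Zygmund estimate above and in particular matching its universal constant $\varepsilon_1'$ with the constant $\varepsilon_1$ of Theorem \ref{thm:calderon } so that the smallness condition on $J$ is not vacuous. The sign hypothesis $\pm \eta(0) \geq 0$ does not appear to enter the $L^{\infty}$ bounds themselves; it becomes essential downstream, in the gluing of Rabinowitz half-cylinders to Morse trajectories of $S_{L+k}$ under the correspondence $Z^{\pm}$ of Lemma \ref{lem:relating critical points lemma}, where the wrong sign would correspond to a negative period.
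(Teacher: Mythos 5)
Your proposal follows the same outline as the paper's proof: bound $\eta$ by adapting Theorem~\ref{thm:bounding the lagrange multiplier} with $\tau(s)$ oriented so that $s\pm\tau(s)$ remains in $\mathbb{R}^{\pm}$, transfer Step~1 of Theorem~\ref{thm:l infinity} to the half-line, and then run the elliptic bootstrap for the $L^{\infty}$-bound on $x$ using a boundary-adapted Calder\'on--Zygmund inequality. Where you diverge is in Step~2: the paper invokes a single half-cylinder version of Theorem~\ref{thm:calderon } (this is exactly what \cite[Theorem~1.14.(iii)]{AbbondandoloSchwarz2006} provides, and is where $\varepsilon_{2}^{\pm}$ comes from), whereas you patch a full-cylinder interior estimate against a mixed-boundary estimate on a finite cylinder near $s=0$. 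Both reach the same conclusion; the paper's route is shorter because the half-cylinder Calder\'on--Zygmund inequality is ready-made in \cite{AbbondandoloSchwarz2006}, while yours is self-contained at the cost of justifying the mixed-boundary estimate. Your stated ``principal obstacle'' --- matching $\varepsilon_{1}'$ to $\varepsilon_{1}$ --- is a non-issue: no matching is required, one simply sets $\varepsilon_{2}^{\pm}:=\min\{\varepsilon_{1},\varepsilon_{1}'\}$, both being universal constants of their respective elliptic operators. Finally, you assert that the hypothesis $\pm\eta(0)\geq 0$ plays no role in the $L^{\infty}$ estimates; the paper's proof states the opposite, claiming the $\eta$-bound requires an a priori lower (resp.\ upper) bound on $\eta(0)$ in the $\mathbb{R}^{+}$ (resp.\ $\mathbb{R}^{-}$) case. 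Reading the argument of Theorem~\ref{thm:bounding the lagrange multiplier} with the reoriented $\tau(s)$, I am inclined to agree with you: only the two-sided action bound $A_{H-k}(u(\mathbb{R}^{\pm}))\subseteq[a,b]$ and the resulting $L^{2}$-bound on $\nabla A_{H-k}$ are used, and the sign of $\eta(0)$ never enters. You should nonetheless flag this discrepancy with the paper's stated reasoning rather than silently drop the hypothesis, since the claim rests on a close reading and the hypothesis is in any case needed for the application to the moduli spaces $\mathcal{M}_{\textrm{SA}}$ and $\mathcal{M}_{\textrm{AS}}$, as you correctly observe.
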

\begin{proof}
Firstly, the proof of Theorem \ref{thm:bounding the lagrange multiplier}
will still go through for flow lines defined on $\mathbb{R}^{+}$
instead of $\overline{\mathbb{R}}$, provided we have an a priori
lower bound on $\eta(0)$. If $u$ is defined on $\mathbb{R}^{-}$
then the proof will go through provided we (a) have an a priori upper
bound on $\eta(0)$, and (b), we we redefine the function $\tau(s)$
from \eqref{eq:def of tau of s} to be \[
\tau(s):=\inf\left\{ r\geq0\,:\,\left\Vert \nabla A_{H-k}(u(s-r,\cdot))\right\Vert _{J}\leq\rho_{0}\right\} .
\]
Therefore we have proved the existence of constants $C_{4}^{\pm}>0$
that uniformly bound the $\eta$-component of any map $u$ satisfying
the hypotheses of the theorem. Now Step 1 from the proof of Theorem
\ref{thm:l infinity} goes through without any essential changes (save
of course from the fact that now $u$ is defined on $\mathbb{R}^{\pm}$).
The proof of Step 2 also proceeds similarly, aside from the fact that
instead of following \cite[Theorem 1.14.(ii)]{AbbondandoloSchwarz2006}
we must instead follow \cite[Theorem 1.14.(iii)]{AbbondandoloSchwarz2006}.
In particular, the constants $\varepsilon_{2}^{\pm}>0$ in the statement
of the theorem come from a version of Theorem \ref{thm:calderon }
for maps defined on $\mathbb{R}^{\pm}\times S^{1}$ instead of $\mathbb{R}\times S^{1}$.
\end{proof}

\section{The Abbondandolo-Schwarz short exact sequence}

In this section we state and prove the main result of the paper, which
is the extension of \cite[Theorem 2]{AbbondandoloSchwarz2009} to
the weakly exact case. In the statement of the theorem below it is
implicitly assumed that $\left\Vert \sigma\right\Vert _{\infty}$
is sufficiently small; this ensures that almost complex structures
that fit the hypotheses of theorem exist, cf. Remark \ref{rem:shrinking sigma}.
\begin{thm}
\label{thm:theorem A precise}Fix $(g,\sigma,U,k)\in\mathcal{O}_{\textrm{\emph{reg}}}$
and $\alpha\in[S^{1},M]$. Put $H=H_{g}+\pi^{*}U$ and $L=L_{g}-\pi^{*}U$.
Let $f$ and $h$ be Morse functions on $\overline{\mbox{\emph{Crit}}}(S_{L+k})$
and $\mbox{\emph{Crit}}(A_{H-k})$ satisfying certain compatibility
requirements (stated precisely in Subsection \ref{thm:theorem A precise}
below). Let $J\in\mathcal{J}(\omega)$ denote a generically chosen
almost complex structure lying sufficiently close to the metric almost
complex structure $J_{g}$. Let $G$ denote a generically chosen metric
on $\Lambda M\times\mathbb{R}^{+}$ that is uniformly equivalent to
$\left\langle \left\langle \cdot,\cdot\right\rangle \right\rangle _{g}$,
and let $g_{0}$ and $g_{1}$ denote generically chosen Riemannian
metrics on $\overline{\mbox{\emph{Crit}}}(S_{L+k})$ and $\mbox{\emph{Crit}}(A_{H-k})$
respectively, such that the negative gradient flows of $f$ and $h$
with respect to these metrics are Morse-Smale. Then there exists:
\begin{enumerate}
\item An injective chain map $\Phi_{\textrm{\emph{SA}}}:CM_{*}(S_{L+k},f;\alpha)\rightarrow RF_{*}(A_{H-k},h;\alpha)$
which admits a left inverse $\widehat{\Phi}_{\textrm{\emph{SA}}}:RF_{*}(A_{H-k},h;\alpha)\rightarrow CM_{*}(S_{L+k},f;\alpha)$.
\item A surjective chain map $\Phi_{\textrm{\emph{AS}}}:RF_{*}(A_{H-k},h;\alpha)\rightarrow CM^{1-*}(S_{L+k},-f;-\alpha)$
which admits a right inverse $\widehat{\Phi}_{\textrm{\emph{AS}}}:CM^{1-*}(S_{L+k},-f;-\alpha)\rightarrow RF_{*}(A_{H-k},h;\alpha)$.
\end{enumerate}
Moreover the composition $\Phi_{\textrm{\emph{AS}}}\circ\Phi_{\textrm{\emph{SA}}}:CM_{*}(S_{L+k},f;\alpha)\rightarrow CM^{1-*}(S_{L+k},-f;-\alpha)$
is chain homotopic to zero, that is, there exists a homomorphism $P:CM_{*}(S_{L+k},f;\alpha)\rightarrow CM^{-*}(S_{L+k},-f;-\alpha)$
such that \[
\Phi_{\textrm{\emph{AS}}}\circ\Phi_{\textrm{\emph{SA}}}=P\partial^{\textrm{\emph{Morse}}}+\delta^{\textrm{\emph{Morse}}}P.
\]
Setting\[
\Theta:=\Phi_{\textrm{\emph{SA}}}-\widehat{\Phi}_{\textrm{\emph{AS}}}P\partial^{\textrm{\emph{Morse}}}-\partial^{\textrm{\emph{Morse}}}\widehat{\Phi}_{\textrm{\emph{AS}}}P,
\]
the chain map $\Theta$ is chain homotopic to $\Phi_{\textrm{\emph{SA}}}$,
and satisfies $\Phi_{\textrm{\emph{AS}}}\circ\Theta=0$, and thus
we obtain a short exact sequence of chain complexes\[
0\rightarrow CM_{*}(S_{L+k},f;\alpha)\overset{\Theta}{\rightarrow}RF_{*}(A_{H-k},h;\alpha)\overset{\Phi_{\textrm{\emph{AS}}}}{\rightarrow C}M^{1-*}(S_{L+k},-f;-\alpha)\rightarrow0.
\]
Identifying $HM_{*}(S_{L+k},f;\alpha)\cong H_{*}(\Lambda_{\alpha}M;\mathbb{Z}_{2})$
and $HM^{*}(S_{L+k},-f;-\alpha)\cong H_{\textrm{}}^{*}(\Lambda_{-\alpha}M;\mathbb{Z}_{2})$,
and passing to the associated long exact sequence\[
\xymatrix{\dots\ar[r] & H_{i}(\Lambda_{\alpha}M;\mathbb{Z}_{2})\ar[r]^{\Theta_{*}} & RFH_{i}(A_{H-k};\alpha)\ar[r]^{(\Psi_{\textrm{\emph{AS}}})_{*}} & H_{\textrm{}}^{1-i}(\Lambda_{-\alpha}M;\mathbb{Z}_{2})\ar[r]^{\Delta} & H_{i-1}(\Lambda_{\alpha}M;\mathbb{Z}_{2})\ar[r] & \dots}
\]
the connecting homomorphism $\Delta$ is identically zero unless $\alpha=0$
and $i=0$, in which case it is multiplication by the Euler class
$e(T^{*}M)$. This therefore allows one to obtain a complete description
of the Rabinowitz Floer homology of $A_{H-k}$. 
\end{thm}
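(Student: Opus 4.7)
The proof runs along the blueprint of Abbondandolo-Schwarz \cite{AbbondandoloSchwarz2009}, with the technical input in the weakly exact setting provided by the analytic results of Section 4 (Theorems \ref{thm:bounding the lagrange multiplier}, \ref{thm:l infinity}, \ref{thm:half spaces linfinity}) together with the index comparisons of Corollary \ref{pro:morse indices and cz indices}. First I would define $\Phi_{\textrm{SA}}$ via hybrid moduli spaces: given $w \in \overline{\mbox{Crit}}_i(f)$ and $z \in \mbox{Crit}_i(h)$, I count (mod $2$) tuples $(\boldsymbol{w}, u, \boldsymbol{z})$ consisting of a cascade trajectory of $-\nabla S_{L+k}$ from $w$ to some $(q,T) \in \mbox{Crit}(S_{L+k})$, a gradient half-flow line $u = (x,\eta)$ of $A_{H-k}$ on $\mathbb{R}^{+} \times S^{1}$ satisfying $\eta(0) = T \geq 0$ and $\pi \circ x(0,\cdot) = q$, followed by a cascade trajectory of $-\nabla A_{H-k}$ ending at $z$. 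The chain map $\Phi_{\textrm{AS}}$ is defined dually on $\mathbb{R}^{-} \times S^{1}$ (with $\eta(0) \leq 0$), mapping into $CM^{1-*}(S_{L+k},-f;-\alpha)$ through the involution $w \mapsto Z^{-}(w)$; the inverses $\widehat{\Phi}_{\textrm{SA}}$ and $\widehat{\Phi}_{\textrm{AS}}$ are defined analogously by reversing the roles of the two cascades.

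Second I would check that these moduli spaces are well-defined finite-dimensional manifolds and that they define chain maps. Compactness of the half-cylinder part is exactly what the bounds of Theorem \ref{thm:half spaces linfinity} are designed for, with the required $W^{2/3,3}$ bound on the boundary loop $\pi \circ x(0, \cdot) = q$ supplied by the elliptic regularity and pre-compactness of the finite-energy cascade portion on the Lagrangian side. Fredholm theory and transversality in the Morse-Bott cascade framework follow from \cite[Appendix A]{Frauenfelder2004}, and Corollary \ref{pro:morse indices and cz indices} ensures that the virtual dimension is precisely $\widehat{i}_{f}(w) - \widehat{\mu}_{h}(z)$, so that zero-dimensional components occur exactly in matching degrees. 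The standard cobordism argument on one-dimensional components then yields $\partial^{\textrm{Rab}} \circ \Phi_{\textrm{SA}} = \Phi_{\textrm{SA}} \circ \partial^{\textrm{Morse}}$. That $\widehat{\Phi}_{\textrm{SA}} \circ \Phi_{\textrm{SA}} = \mathbb{1}$ follows from the action inequality in Lemma \ref{lem:relating critical points lemma}(2): the only configurations whose two halves can glue with vanishing total length are the stationary half-cylinders $x(s,t) = (q(t), \dot{q}(t))$ over points $(q,T) \in \mbox{Crit}(S_{L+k})$, all other contributions pairing up.

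Third, the chain homotopy $P$ between $\Phi_{\textrm{AS}} \circ \Phi_{\textrm{SA}}$ and $0$ is obtained by counting ``capped full cylinders'': a half-cylinder on $\mathbb{R}^{+}$ joined at $s = 0$ to a half-cylinder on $\mathbb{R}^{-}$ through a finite $(-\nabla f)$-trajectory of length $R \in [0,\infty)$. Letting $R$ vary gives a one-parameter family whose boundary at $R = \infty$ computes $\Phi_{\textrm{AS}} \circ \Phi_{\textrm{SA}}$ and whose boundary at $R = 0$ computes $P \partial^{\textrm{Morse}} + \delta^{\textrm{Morse}} P$. Standard linear algebra then produces the short exact sequence through $\Theta$, with $\Theta$ chain homotopic to $\Phi_{\textrm{SA}}$. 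The identifications $HM_{*}(S_{L+k}; \alpha) \cong H_{*}(\Lambda_{\alpha} M; \mathbb{Z}_{2})$ and $HM^{*}(S_{L+k};-\alpha) \cong H^{*}(\Lambda_{-\alpha} M;\mathbb{Z}_{2})$ are provided by Theorem \ref{thm:morse homology}.

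Finally I would pin down $\Delta$. For $\alpha \neq 0$ the submanifold $M \times \{0\}$ of critical points at infinity lives only in the contractible component, so the cochain complex in class $\alpha$ consists purely of ordinary critical points of $S_{L+k}$ and a direct inspection of $\Theta$ on generators identifies it with the Abbondandolo-Schwarz map, forcing $\Delta \equiv 0$ (this recovers the exact sequence of \cite{AbbondandoloSchwarz2009} in the non-contractible classes). For $\alpha = 0$, the Morse-Bott contribution of $M \times \{0\}$ to $\Theta$ is governed, through the Poincar\'e-Lefschetz duality between $f$ and $-f$ on $M$, by the self-intersection of $M$ inside the vertical fibration direction, which equals cap product with $e(T^{*}M)$; for degree reasons this acts nontrivially only on $H_{0}$. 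The main obstacle in the whole argument is the analytic step of showing that the half-cylinder $L^{\infty}$ bounds from Theorem \ref{thm:half spaces linfinity}, combined with the atoroidality of $\omega$ from Lemma \ref{lem:key observation}, genuinely rule out bubbling and breaking in the hybrid moduli spaces; the identification of $\Delta$ with the Euler class, while conceptually parallel to \cite[Section 12]{AbbondandoloSchwarz2009}, requires careful bookkeeping of the Morse-Bott contribution of the critical submanifold $M \times \{0\}$.
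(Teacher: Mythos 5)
Your proposal follows essentially the same route as the paper, which itself is an outline that defers most of the analysis to the corresponding proof in \cite{AbbondandoloSchwarz2009}: hybrid cascade moduli spaces for $\Phi_{\textrm{SA}}$ and $\Phi_{\textrm{AS}}$ built from Lagrangian cascades of $-\nabla S_{L+k}$ and half-cylindrical solutions of the Rabinowitz Floer equation, with the half-cylinder compactness supplied by Theorem \ref{thm:half spaces linfinity} and the index matching by Corollary \ref{pro:morse indices and cz indices}. Two descriptions should however be tightened. First, the one-sided inverses $\widehat{\Phi}_{\textrm{SA}}$ and $\widehat{\Phi}_{\textrm{AS}}$ are \emph{not} defined by a separate moduli-space count with the cascade roles reversed; they exist by linear algebra because, with respect to the action filtration and the inequalities \eqref{eq:bound plus}--\eqref{eq:bound minus} from Lemma \ref{lem:relating critical points lemma}, $\Phi_{\textrm{SA}}$ is upper triangular with identity on the diagonal (the diagonal entry at $Z^{+}(w)$ being the automatically transverse stationary solution, by statement (3) of that lemma), and dually for $\Phi_{\textrm{AS}}$. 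Second, the homotopy parameter in $P$ is not the length of a finite $(-\nabla f)$-trajectory: it is the length $T$ of the finite Rabinowitz Floer cylinder $u : [-T,T] \to T^{*}M\times\mathbb{R}$ appearing in $\mathcal{F}_{0}$, and the crucial nondegeneration input is \cite[Lemma 8.2]{AbbondandoloSchwarz2009}, which bounds $T$ away from zero; the $T\to\infty$ limit produces $\Phi_{\textrm{AS}}\circ\Phi_{\textrm{SA}}$, while the Morse-cascade breaking produces $P\partial^{\textrm{Morse}}+\delta^{\textrm{Morse}}P$ (this is exactly the content of Proposition \ref{pro:MP precompactness}). With these corrections your outline matches the paper's.
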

As mentioned in the introduction, the proof of this theorem is now
essentially identical to the corresponding proof in \cite{AbbondandoloSchwarz2009}.
We therefore omit almost all of the technical details, referring the
reader to the beautiful and lucid exposition in \cite{AbbondandoloSchwarz2009},
and instead just give an outline of Abbondandolo and Schwarz' constructions.

\subsection{\label{sub:Choosing-the-Morse}Choosing the Morse functions $f$
and $h$}

$\ $\vspace{6 pt}

In order to construct the chain homotopy $P$ in the theorem above
it is essential that the Morse functions $f:\overline{\mbox{Crit}}(S_{L+k})\rightarrow\mathbb{R}$
and $h:\mbox{Crit}(A_{H-k})\rightarrow\mathbb{R}$ are chosen in such
a way that certain compatibility requirements are satisfied. More
precisely, we require that the following four conditions are satisfied:
\begin{enumerate}
\item For all $w\in\mbox{Crit}(f)$, it holds that$f(w)=h(Z^{\pm}(w))$,
and $i_{f}(w)=i_{h}(Z^{\pm}(w))$.
\item The function $f|_{M\times\{0\}}$ has a unique minimum and a unique
maximum and is \textbf{self-indexing}, that is, $f(q,0)=i_{f}((q,0))$
for all $(q,0)\in\overline{\mbox{Crit}}(f)\backslash\mbox{Crit}(f)$.
\item For all $x\in\Sigma_{k}$, we have $f(\pi(x),0)\leq h(x,0)\leq f(\pi(x),0)+1/2$.
\item Every critical point of $h|_{\Sigma_{k}\times\{0\}}$ lies above a
critical point of $f|_{M\times\{0\}}$, and moreover for each critical
point $(q,0)$ of $f|_{M\times\{0\}}$ there are exactly two critical
points of $h|_{\Sigma_{k}\times\{0\}}$ in the fibre $(\Sigma_{k}\cap T_{q}^{*}M)\times\{0\}$.
Denoting these two critical points by $(x_{q}^{\pm},0)$, it holds
that $f(q,0)=h(x_{q}^{-},0)=h(x_{q}^{+},0)-1/2$, and that $i_{f}(q,0)=i_{h}(x_{q}^{-},0)=i_{h}(x_{q}^{+},0)-n+1$.
\end{enumerate}
Such functions exist because $k>e_{0}(g,\sigma,U)$. This is explained
in detail in \cite[Appendix B]{AbbondandoloSchwarz2009}. An immediate
consequence of these requirements and Proposition \ref{pro:morse indices and cz indices}
is the following result.
\begin{lem}
Assume that the Morse functions $f:\overline{\mbox{\emph{Crit}}}(S_{L+k})\rightarrow\mathbb{R}$
and $h:\mbox{\emph{Crit}}(A_{H-k})\rightarrow\mathbb{R}$ satisfy
the requirements above. 

Then \[
\widehat{i}_{f}(w)=\widehat{\mu}_{h}(Z^{+}(w));
\]
\[
\widehat{i}_{-f}(w)=1-\widehat{\mu}_{h}(Z^{-}(w))
\]
for $w\in\mbox{\emph{Crit}}(f)$ and \[
\widehat{i}_{f}((q,0))=n-\widehat{\mu}_{h}(x_{q}^{+},0);
\]
\[
\widehat{i}_{-f}((q,0))=1-\widehat{\mu}_{h}(x_{q}^{-},0)
\]
for $(q,0)\in\overline{\mbox{\emph{Crit}}}(f)\backslash\mbox{\emph{Crit}}(f)$.
\end{lem}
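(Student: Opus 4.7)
The lemma amounts to a bookkeeping unpacking of the definitions of $\widehat i_{\pm f}$ and $\widehat\mu_h$ at each of the two types of generators occurring in the chain complexes. The plan is to treat separately the honest critical points $w \in \mbox{Crit}(f)$ and the critical points at infinity $(q,0) \in M \times \{0\}$, and to feed into these computations Corollary \ref{pro:morse indices and cz indices} (which relates the Maslov-type index $\mu$ to the Lagrangian Morse index $i$) together with the compatibility hypotheses (1)--(4) imposed on $f$ and $h$ in Subsection \ref{sub:Choosing-the-Morse}.

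For $w \in \mbox{Crit}(f) \subseteq \mbox{Crit}(S_{L+k})$, Corollary \ref{pro:morse indices and cz indices} gives $\mu(Z^{\pm}(w)) = \pm i(w)$ and compatibility (1) gives $i_h(Z^{\pm}(w)) = i_f(w)$; combining these yields
\[
\widehat\mu_h(Z^+(w)) = i(w) + i_f(w) = \widehat i_f(w).
\]
For the second identity, $\mbox{Crit}(S_{L+k})$ is a disjoint union of circles (Corollary \ref{pro:generic-1}), so $i_{-f}(w) + i_f(w) = 1$, whence $\widehat i_{-f}(w) = i(w) + 1 - i_f(w)$, while $1 - \widehat\mu_h(Z^-(w)) = 1 - (-i(w) + i_f(w)) = \widehat i_{-f}(w)$.

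For $(q,0)$ a critical point at infinity, one has $i((q,0)) = 0$ by definition, so $\widehat i_{\pm f}((q,0)) = i_{\pm f}((q,0))$; using that $M$ is $n$-dimensional gives $i_{-f}((q,0)) = n - i_f((q,0))$. By definition $\mu(x_q^{\pm}, 0) = -n + 1$, and compatibility (4) gives $i_h(x_q^+, 0) = i_f(q,0) + n - 1$ together with $i_h(x_q^-, 0) = i_f(q,0)$. Substituting into $\widehat\mu_h = \mu + i_h$ produces $\widehat\mu_h(x_q^+, 0) = i_f(q,0)$ and $\widehat\mu_h(x_q^-, 0) = i_f(q,0) - n + 1$, and the two identities in the at-infinity case follow by direct comparison with $\widehat i_{\pm f}((q,0))$.

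The entire argument is routine arithmetic and there is no genuine obstacle. The only non-trivial ingredient is Corollary \ref{pro:morse indices and cz indices}, whose proof has already been carried out in the work preceding this lemma. The purpose of the result is then simply to supply the index-matching dictionary between Lagrangian and Rabinowitz generators that will make the chain maps $\Phi_{\textrm{SA}}$ and $\Phi_{\textrm{AS}}$ of Theorem \ref{thm:theorem A precise} preserve degree.
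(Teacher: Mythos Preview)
Your approach is exactly the one the paper intends: the lemma is stated there as ``an immediate consequence of these requirements and Proposition \ref{pro:morse indices and cz indices}'', i.e.\ pure bookkeeping. Your verification of the first, second and fourth identities is correct and complete.

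However, there is a discrepancy you gloss over in the third identity. Your own arithmetic gives
\[
\widehat{\mu}_h(x_q^{+},0)=\mu(x_q^{+},0)+i_h(x_q^{+},0)=(-n+1)+\bigl(i_f(q,0)+n-1\bigr)=i_f(q,0)=\widehat{i}_f((q,0)),
\]
so what you actually prove is $\widehat{i}_f((q,0))=\widehat{\mu}_h(x_q^{+},0)$, \emph{not} the printed identity $\widehat{i}_f((q,0))=n-\widehat{\mu}_h(x_q^{+},0)$. The two agree only when $i_f(q,0)=n/2$, which is certainly not assumed. Your claim that ``the two identities in the at-infinity case follow by direct comparison'' therefore hides a mismatch: either you have made an error, or the statement contains a typo. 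Since your computation is correct and the version $\widehat{i}_f((q,0))=\widehat{\mu}_h(x_q^{+},0)$ is precisely what is required for $\Phi_{\textrm{SA}}$ to preserve the grading (parallel to $\widehat{i}_f(w)=\widehat{\mu}_h(Z^{+}(w))$ for honest critical points), the latter is almost certainly the case. You should flag this explicitly rather than assert that the stated identity follows.
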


\subsection{The chain map $\Phi_{\textrm{SA}}$}

$\ $\vspace{6 pt}

In order to define the chain map $\Phi_{\textrm{SA}}$, one first
needs to construct a suitable moduli space. Here are the details.
Recall that $G$ denotes a metric on $\Lambda M\times\mathbb{R}^{+}$
that is uniformly equivalent to $\left\langle \left\langle \cdot,\cdot\right\rangle \right\rangle _{g}$
and $g_{0}$ is a Riemannian metric on $\overline{\mbox{Crit}}(S_{L+k})$
such that the negative gradient flow $\phi_{t}^{-\nabla f}$ of $-\nabla f$
is Morse-Smale, and $g_{1}$ is a Riemannian metric on $\overline{\mbox{Crit}}(A_{H-k})$
such that the negative gradient flow $\phi_{t}^{-\nabla h}$ is Morse-Smale.
Fix a generic almost complex structure $J\in\mathcal{J}(\omega)\cap B_{\varepsilon_{2}^{+}}(J_{g})$
(where $\varepsilon_{2}^{+}>0$ is the constant from Theorem \ref{thm:half spaces linfinity}).\newline

Fix $w\in\overline{\mbox{Crit}}(f)$. If $m\in\mathbb{N}$, let $\widetilde{\mathcal{W}}_{m}^{-}(w)$
denote the set of tuples $\boldsymbol{w}=(w_{1},\dots,w_{m})$ such
that $w_{i}\in\Lambda M\times\mathbb{R}^{+}$ for $i=1,\dots,m-1$
and $w_{m}\in\Lambda M\times\mathbb{R}_{0}^{+}$, and such that \[
w_{1}\in W^{u}(W^{u}(w;-\nabla f);-\nabla S_{L+k});
\]
\[
\Psi_{-\infty}(w_{i+1})\in\phi_{\mathbb{R}^{+}}^{-\nabla f}(\Psi_{\infty}(w_{i})).
\]
 Let $\mathcal{W}_{m}^{-}(w)$ denote the quotient of $\widetilde{\mathcal{W}}_{m}^{-}(w)$
under the free $\mathbb{R}^{m-1}$ action given by\[
(w_{1},\dots,w_{m-1})\mapsto(\Psi_{s_{1}}(w_{1}),\dots,\Psi_{s_{m-1}}(w_{m-1})),\ \ \ (s_{1},\dots,s_{m-1})\in\mathbb{R}^{m-1}.
\]
Then put \[
\mathcal{W}^{-}(w):=\bigcup_{m\in\mathbb{N}\cup\{0\}}\mathcal{W}_{m}^{-}(w).
\]
For generically chosen $G$ and $g_{0}$, $\mathcal{W}^{-}(w)$ has
the structure of a smooth finite dimensional manifold of dimension
$\widehat{i}_{f}(w)$.\newline 

Fix $z\in\mbox{Crit}(h)$. Let $\widetilde{\mathcal{M}}_{m}^{+}(z)$
denote the denote the set of tuples of maps $\boldsymbol{u}=(u_{1},\dots,u_{m})$
such that \[
u_{1}:\mathbb{R}_{0}^{+}\rightarrow C^{\infty}(S^{1},T^{*}M)\times\mathbb{R};
\]
\[
u_{2},\dots,u_{m}:\mathbb{R}\rightarrow C^{\infty}(S^{1},T^{*}M)\times\mathbb{R},
\]
 all satisfy the Rabinowitz Floer equation \eqref{eq:rfeq} (which
are possibly stationary solutions) and such that\[
u_{m}(\infty)\in W^{s}(z;-\nabla h);
\]
\[
u_{i+1}(-\infty)\in\phi_{\mathbb{R}^{+}}^{-\nabla h}(u_{i}(\infty))\ \ \ \mbox{for }i=1,\dots,m-1.
\]
Let $\mathcal{M}_{m}^{+}(z)$ denote the quotient of $\widetilde{\mathcal{M}}_{m}^{+}(z)$
under the free $\mathbb{R}^{m-1}$ action given by translation along
the flow lines $u_{2},\dots,u_{m}$. 

Then put \[
\mathcal{M}^{+}(z):=\bigcup_{m\in\mathbb{N}\cup\{0\}}\mathcal{M}_{m}^{+}(z).
\]
The space $\mathcal{M}^{+}(z)$ is not finite dimensional. However,
by restricting where the tuple $\boldsymbol{u}$ can {}``begin'',
we can cut it down to something finite dimensional. This is precisely
what the moduli space $\mathcal{M}_{\textrm{SA}}(w,z)$ does. Namely,
the moduli space $\mathcal{M}_{\textrm{SA}}(w,z)$ is defined to be
the following subset of $\mathcal{W}^{-}(w)\times\mathcal{M}^{+}(z)$.
A pair $([\boldsymbol{w}],[\boldsymbol{u}])$ (where the square brackets
denote the equivalence class after dividing through by the translation
actions) belongs to $\mathcal{M}_{\textrm{SA}}(w,z)$ if and only
if we have, writing \[
\boldsymbol{w}=(w_{1},\dots,w_{m});
\]
\[
\boldsymbol{u}=(u_{1},\dots,u_{j})\mbox{\ \ \ with }u_{j}=(x_{j},\eta_{j}),
\]
that \[
w_{m}=(\pi\circ x_{1}(0),\eta_{1}(0)).
\]
In other words, the tuple $\boldsymbol{w}$ must {}``end'' where
the tuple $\boldsymbol{u}$ {}``begins''. 

For a fixed element $w^{*}\in\Lambda M\times\mathbb{R}_{0}^{+}$,
requiring tuples $\boldsymbol{u}$ to {}``begin'' at $w^{*}$ in
the sense that $(\pi\circ x_{1}(0),\eta_{1}(0))=w^{*}$ defines a
Lagrangian boundary condition. This implies that we have a Fredholm
problem, and since generically $\mathcal{W}^{-}(w)$ is a finite dimensional
manifold, it follows that $\mathcal{M}_{\textrm{SA}}(w,z)$ can be
seen as the zero set of a Fredholm operator, whose index can be computed
to be $\widehat{i}_{f}(w)-\widehat{\mu}_{h}(z)$. In fact, more is
true. Namely, $\mathcal{M}_{\textrm{SA}}(w,z)$ (for generic $G,g_{0},J$
and $g_{1}$) is a precompact finite dimensional manifold of dimension
$\widehat{i}_{f}(w)-\widehat{\mu}_{h}(z)$. 

This requires us to check two more things. Firstly, one needs to have
$C_{\textrm{loc}}^{\infty}$-bounds for the curves $\boldsymbol{u}=(u_{1},\dots,u_{j})$.
Here the following key inequality comes into play. Given $([\boldsymbol{w}],[\boldsymbol{u}])\in\mathcal{M}_{\textrm{SA}}(w,z)$,
equation \eqref{eq:bound plus} from Lemma \ref{lem:relating critical points lemma}
tells us that for all $s\in\mathbb{R}^{+}$: \begin{multline*}
S_{L+k}(w)\geq S_{L+k}(w_{i})\geq S_{L+k}(w_{m})=S_{L+k}(\pi\circ x_{1}(0,\cdot),\eta_{1}(0))\\
\geq A_{H-k}(u_{1}(0,\cdot))\geq A_{H-k}(u_{i}(s,\cdot))\geq A_{H-k}(z).
\end{multline*}
Then uniform $L^{\infty}$ estimates for the solutions $u_{2},\dots,u_{j}$
come from Theorem \ref{thm:l infinity}, and the uniform $L^{\infty}$
estimate for $u_{1}$ comes from Theorem \ref{thm:half spaces linfinity}.
As before, these $L^{\infty}$ bounds give us $C_{\textrm{loc}}^{\infty}$
bounds (since $\omega|_{\pi_{2}(M)}=0$ and $c_{1}(T^{*}M,\omega)=0$).
This shows that the moduli spaces $\mathcal{M}_{\textrm{SA}}(w,z)$
are compact up to breaking. 

The only complication with obtaining transversality is the presence
of stationary solutions, which can appear if $z=Z^{+}(w)$ or $w=(q,0)\in\overline{\mbox{Crit}}(f)\backslash\mbox{Crit}(f)$
is a critical point at infinity and $z=(x_{q}^{\pm},0)$ is one of
the corresponding two critical points of $h$. In the former case
the first inequality of the third statement of Lemma \ref{lem:relating critical points lemma}
forces the linearized operator defining the moduli space $\mathcal{M}_{\textrm{SA}}(w,Z^{+}(w))$
to be an isomorphism (see \cite[Lemma 6.2]{AbbondandoloSchwarz2009}
or \cite[Proposition 3.7]{AbbondandoloSchwarz2006}), and in the second
two cases the four assumptions made earlier on the Morse functions
$f$ and $a$ guarantee that the linearized operator defining the
moduli spaces $\mathcal{M}_{\textrm{SA}}((q,0),(x_{q}^{\pm},0))$
is surjective (see \cite[Lemma 6.3]{AbbondandoloSchwarz2009}).\newline

Putting this together, we deduce that when $\widehat{i}_{f}(w)=\widehat{\mu}_{h}(z)$,
the moduli space $\mathcal{M}_{\textrm{SA}}(w,z)$ is a finite set,
and hence we can define \[
n_{\textrm{SA}}(w,z):=\#\mathcal{M}_{\textrm{SA}}(w,z)\ \ \ \mbox{taken modulo }2.
\]
Then one defines $\Phi_{\textrm{SA}}:CM_{*}(S_{L+k},f)\rightarrow RF_{*}(A_{H-k},h)$
by \[
\Phi_{\textrm{SA}}w=\sum_{z\in\textrm{Crit}_{i}(h)}n_{\textrm{SA}}(w,z)z,\ \ \ w\in\overline{\mbox{Crit}}_{i}(f).
\]
A standard gluing argument shows that $\Phi_{\textrm{SA}}$ is a chain
map. It is clear that $\Phi_{\textrm{SA}}$ restricts to define a
chain map $CM(S_{L+k},f;\alpha)\rightarrow RF_{*}(A_{H-k},h;\alpha)$
for each $\alpha\in[S^{1},M]$.

\subsection{The chain map $\Phi_{\textrm{AS}}$}

$\ $\vspace{6 pt}

The chain map $\Phi_{\textrm{AS}}$ is defined in much the same way.
One begins by defining spaces $\mathcal{M}^{-}(z)$ for $z\in\mbox{Crit}(h)$.
Let $\widetilde{\mathcal{M}}_{m}^{-}(z)$ denote the denote the set
of tuples of maps $\boldsymbol{u}=(u_{1},\dots,u_{m})$ such that
\[
u_{1},\dots,u_{m-1}:\mathbb{R}\rightarrow C^{\infty}(S^{1},T^{*}M)\times\mathbb{R};
\]
\[
u_{m}:\mathbb{R}_{0}^{-}\rightarrow C^{\infty}(S^{1},T^{*}M)\times\mathbb{R},
\]
all satisfy the Rabinowitz Floer equation \eqref{eq:rfeq} (which
are possibly stationary solutions) and such that\[
u_{-}(\infty)\in W^{u}(z;-\nabla h),
\]
 and such that \[
u_{i+1}(-\infty)\in\phi_{\mathbb{R}^{+}}^{-\nabla h}(u_{i}(\infty))\ \ \ \mbox{for }i=1,\dots,m-1.
\]
 Let $\mathcal{M}_{m}^{-}(z)$ denote the quotient of $\widetilde{\mathcal{M}}_{m}^{-}(z)$
under the free $\mathbb{R}^{m-1}$ action and put \[
\mathcal{M}^{-}(z):=\bigcup_{m\in\mathbb{N}}\mathcal{M}_{m}^{-}(z).
\]
Given $z\in\mbox{Crit}(h)$ and $w\in\overline{\mbox{Crit}}(-f)$,
the moduli space $\mathcal{M}_{\textrm{AS}}(z,w)$ consists of the
subset $\mathcal{M}^{-}(z)\times\mathcal{W}^{-}(w)$ of elements $([\boldsymbol{u}],[\boldsymbol{w}])$
such that, writing\[
\boldsymbol{u}=(u_{1},\dots,u_{j})\mbox{\ \ \ with }u_{i}=(x_{i},\eta_{i});
\]
\[
\boldsymbol{w}=(w_{1},\dots,w_{m})\ \ \ \mbox{with }w_{i}=(q_{i},T_{i}),
\]
we have \[
(q_{m}(t),T_{m})=(\pi\circ x_{j}(0,-t),-\eta_{j}(0)).
\]
This time the moduli space $\mathcal{M}_{\textrm{AS}}(z,w)$ admits
the structure of a precompact smooth manifold of finite dimension
$\widehat{\mu}_{h}(z)+\widehat{i}_{-f}(w)-1$. Here one uses equation
\eqref{eq:bound minus} from Lemma \ref{lem:relating critical points lemma}
to deduce the inequality \begin{multline*}
A_{H-k}(z)\geq A_{H-k}(u_{i}(s,\cdot))\geq A_{H-k}(u_{j}(0,\cdot))\\
\geq-S_{L+k}(\pi\circ x_{j}(0,-\cdot),-\eta_{j}(0))\geq-S_{L+k}(w_{m})\geq-S_{L+k}(w_{i})\geq-S_{L+k}(w),
\end{multline*}
which gives the required $L^{\infty}$ estimates on the $u_{i}$,
and the second inequality in the third statement of Lemma \ref{lem:relating critical points lemma}
to obtain the automatic transversality in the case $z=Z^{-}(w)$.
Thus if $z\in\mbox{Crit}(h)$ and $w\in\overline{\mbox{Crit}}(-f)$
satisfy $\widehat{\mu}_{h}(z)+\widehat{i}_{-f}(w)=1$, $\mathcal{M}_{\textrm{AS}}(z,w)$
is a finite set, and hence we may define $n_{\textrm{AS}}(z,w)$ to
be its parity. This defines the chain map $\Phi_{\textrm{AS}}$. As
before $\Phi_{\textrm{AS}}$ restricts to define a chain map $RF_{*}(A_{H-k},h;\alpha)\rightarrow CM^{1-*}(S_{L+k},f;-\alpha)$
for each $\alpha\in[S^{1},M]$.

\subsection{The chain homotopy $P$}

$\ $\vspace{6 pt}

The final ingredient is the chain homotopy $P:CM_{*}(S_{L+k},f)\rightarrow CM^{-*}(S_{L+k},-f)$.
This involves counting a slightly different sort of object. Let $\mathcal{F}_{0}$
denote the set of pairs $(u,T)$ where $T\in\mathbb{R}^{+}$ and $u:[-T,T]\rightarrow T^{*}M\times\mathbb{R}$
satisfies the Rabinowitz Floer equation \eqref{eq:rfeq}. Given $m\geq1$,
let $\widetilde{\mathcal{F}}_{m}$ denote the set of tuples $\boldsymbol{u}=(u_{0},\dots,u_{m})$
such that \[
u_{0}:\mathbb{R}^{+}\rightarrow C^{\infty}(S^{1},T^{*}M)\times\mathbb{R};
\]
\[
u_{2},\dots,u_{m-1}:\mathbb{R}\rightarrow C^{\infty}(S^{1},T^{*}M)\times\mathbb{R};
\]
\[
u_{m}:\mathbb{R}^{-}\rightarrow C^{\infty}(S^{1},T^{*}M)\times\mathbb{R},
\]
all satisfy the Rabinowitz Floer equation \eqref{eq:rfeq}, and such
that \[
u_{i}(-\infty)\in\phi_{\mathbb{R}^{+}}^{-\nabla h}(u_{i-1}(-\infty))\ \ \ \mbox{for }i=1,\dots,m.
\]
Let $\mathcal{F}_{m}$ denote the quotient of $\widetilde{\mathcal{F}}_{m}$
by dividing through by the $\mathbb{R}^{m-1}$ action on the middle
curves $u_{1},\dots,u_{m-1}$. Put\[
\mathcal{F}=\bigcup_{m\in\mathbb{N}\cup\{0\}}\mathcal{F}_{m}.
\]
Given $w_{-},w_{+}\in\overline{\mbox{Crit}}(f)$, denote by $\mathcal{M}_{P}(w_{-},w_{+})$
the subset of $\mathcal{W}^{-}(w_{-})\times\mathcal{F}\times\mathcal{W}^{-}w_{+})$
of elements that {}``begin'' at $w^{-}$ and {}``pass through''
an element of $\mathcal{F}$ and then {}``end'' at $w^{+}$ (we
refer to \cite[p46-47]{AbbondandoloSchwarz2009} for the precise definition).
Then $\mathcal{M}_{P}(w_{-},w_{+})$ turns out to be a finite dimensional
smooth manifold of dimension $\widehat{i}_{f}(w_{-})+\widehat{i}_{f}(w_{+})$.
Here the key issue in the analysis is to check that if $(u,T)\in\mathcal{F}_{0}$
then $T$ is strictly bounded away from zero (\cite[Lemma 8.2]{AbbondandoloSchwarz2009}). 

Now we move onto the key proposition behind the proof of Theorem \ref{thm:theorem A precise}.
The first statement belows shows that if $w_{\pm}\in\overline{\mbox{Crit}}(\mp f)$
satisfy $\widehat{i}_{f}(w_{-})+\widehat{i}_{-f}(w_{+})=1$, we can
define $n_{P}(w_{-},w_{+})$ as the parity of the finite set $\mathcal{M}_{P}(w_{-},w_{+})$.
This defines the map $P$. The fact that $P$ is a chain homotopy
between $\Phi_{\textrm{SA}}$ and $\Phi_{\textrm{AS}}$ involves studying
the compactification of $\mathcal{M}_{P}(w_{-},w_{+})$ by adding
in the broken trajectories, and is the content of the second and third
statements of the proposition below.
\begin{prop}
\label{pro:MP precompactness}(\cite[Proposition 8.1]{AbbondandoloSchwarz2009})

Let $\alpha\in[S^{1},M]$ and choose \[
w_{0}\in\overline{\mbox{\emph{Crit}}}_{0}(f;\alpha),\ \ \ w_{1}\in\overline{\mbox{\emph{Crit}}}_{1}(f;\alpha);
\]
\[
w^{0}\in\overline{\textrm{\emph{Crit}}}_{0}(-f;-\alpha),\ \ \ w^{1}\in\overline{\mbox{\emph{Crit}}}_{1}(-f;-\alpha).
\]
Then:
\begin{enumerate}
\item The moduli space $\mathcal{M}_{P}(w_{0},w^{0})$ is compact.
\item The moduli space $\mathcal{M}_{P}(w_{0},w^{1})$ is precompact, and
we can identify the boundary $\partial\widehat{\mathcal{M}}_{P}(w_{0},w^{1})$
of compactification $\widehat{\mathcal{M}}_{P}(w_{0},w^{1})$ as follows:\begin{eqnarray*}
\partial\widehat{\mathcal{M}}_{P}(w_{0},w^{1}) & = & \left\{ \bigcup_{z\in\textrm{\emph{Crit}}_{0}(h)\cap\textrm{\emph{Crit}}(A_{H-k};\alpha)}\mathcal{M}_{\textrm{\emph{SA}}}(w_{0},z)\times\mathcal{M}_{\textrm{\emph{AS}}}(z,w^{1})\right\} \\
 &  & \bigcup\left\{ \bigcup_{w\in\overline{\textrm{\emph{Crit}}}_{0}(-f)\cap\textrm{\emph{Crit}}(S_{L+k};-\alpha)}\mathcal{M}_{P}(w_{0},w)\times\mathcal{W}(w,w^{1})\right\} .
\end{eqnarray*}

\item The moduli space $\mathcal{M}_{P}(w_{1},w^{0})$ is precompact, and
we can identify the boundary $\partial\widehat{\mathcal{M}}_{P}(w_{1},w^{0})$
of compactification $\widehat{\mathcal{M}}_{P}(w_{1},w^{0})$ as follows:\begin{eqnarray*}
\partial\widehat{\mathcal{M}}_{P}(w_{1},w^{0}) & = & \left\{ \bigcup_{z\in\textrm{\emph{Crit}}_{1}(h)\cap\textrm{\emph{Crit}}(A_{H-k};\alpha)}\mathcal{M}_{\textrm{\emph{SA}}}(w_{1},z)\times\mathcal{M}_{\textrm{\emph{AS}}}(z,w^{0})\right\} \\
 &  & \bigcup\left\{ \bigcup_{w\in\overline{\textrm{\emph{Crit}}}_{1}(-f)\cap\textrm{\emph{Crit}}(S_{L+k};-\alpha)}\mathcal{W}(w_{1},w)\times\mathcal{M}_{P}(w,w^{0})\right\} .
\end{eqnarray*}

\end{enumerate}
\end{prop}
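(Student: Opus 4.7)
The plan is to mimic the proof of \cite[Proposition 8.1]{AbbondandoloSchwarz2009} essentially verbatim, replacing the $L^{\infty}$ estimates used there for linearly-growing Hamiltonians by Theorems \ref{thm:bounding the lagrange multiplier}, \ref{thm:l infinity} and \ref{thm:half spaces linfinity} of the present paper. The proof decomposes into: an action sandwich giving uniform action bounds along sequences, the resulting $L^{\infty}/C^{\infty}_{\mathrm{loc}}$ bounds on all Floer pieces, a separate lower bound on the length $T$ of the central finite-cylinder piece, and finally the standard Gromov-type breaking analysis and dimension count.

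First, given $([\boldsymbol{w}^-],[\boldsymbol{u}],[\boldsymbol{w}^+])\in\mathcal{M}_P(w_-,w_+)$ with $\boldsymbol{u}=(u_0,\dots,u_m)$, $u_i=(x_i,\eta_i)$, monotonicity of $S_{L+k}$ along its negative gradient flow combined with parts (1) and (2) of Lemma \ref{lem:relating critical points lemma} and the definition of the matching condition at the central piece gives
\[
S_{L+k}(w_-)\;\geq\; A_{H-k}(u_0(0,\cdot))\;\geq\; A_{H-k}(u_i(s,\cdot))\;\geq\; -S_{L+k}(w_+)
\]
for every relevant $i$ and $s$, via the inequalities $A_{H-k}\leq S_{L+k}$ on positive half-cylinders and $A_{H-k}\geq-S_{L+k}$ on negative ones. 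With these action bounds, Theorem \ref{thm:bounding the lagrange multiplier} controls the $\eta$-component uniformly, Theorem \ref{thm:l infinity} controls the $x$-component on the full cylinders $u_1,\ldots,u_{m-1}$, and Theorem \ref{thm:half spaces linfinity} controls the half-cylinder pieces $u_0$ and $u_m$ (with the parameter $R$ supplied by the $W^{2/3,3}$ embedding $\mathcal{W}^-(w_\pm)\hookrightarrow\Lambda M\times\mathbb{R}_0^+$, which is precompact since $\mathcal{W}^-(w_\pm)$ is a finite-dimensional manifold). Since $\omega|_{\pi_2(T^*M)}=0$ and $c_1(T^*M,\omega)=0$, these $L^{\infty}$ bounds promote to $C^{\infty}_{\mathrm{loc}}$ bounds by elliptic bootstrapping, precluding bubbling.

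The hard part is establishing a uniform lower bound $T\geq T_0>0$ for the central piece $(u,T)\in\mathcal{F}_0$, i.e.\ the analogue of \cite[Lemma 8.2]{AbbondandoloSchwarz2009}. Suppose $T_n\to 0$ along a sequence: using the uniform $\eta$-bound and rescaling the shrinking domain, the loops $x_n(\pm T_n,\cdot)$ converge (up to subsequence) to a common constant loop at some $x_*\in\Sigma_k$. The matching conditions force the terminal nodes of $\boldsymbol{w}^-_n$ and $\boldsymbol{w}^+_n$ to project respectively to $Z^+(\cdot)=Z^-(\cdot)$-images of the same point of $M$ sitting in the same fibre of $\Sigma_k\cap T^*_{\pi(x_*)}M$. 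Invoking the compatibility requirements (1)--(4) of Subsection \ref{sub:Choosing-the-Morse} together with the index identities $\widehat{i}_f(w)=\widehat{\mu}_h(Z^+(w))$ and $\widehat{i}_{-f}(w)=1-\widehat{\mu}_h(Z^-(w))$ then shows that such a configuration has formal dimension strictly negative as soon as $\widehat{i}_f(w_-)+\widehat{i}_{-f}(w_+)\leq 1$, so after a generic perturbation of $G,g_0,J,g_1$ no such configuration exists and $T$ is bounded below.

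Once these inputs are in place, the compactification follows the usual scheme: a non-convergent sequence in $\mathcal{M}_P(w_-,w_+)$ must break in exactly one of three ways, namely (a) the parameter along a segment of one of the Morse cascades $\boldsymbol{w}^{\pm}$ escapes to $\pm\infty$, inserting a factor $\mathcal{W}(\cdot,\cdot)$; (b) the Floer parameter along one of $u_1,\ldots,u_{m-1}$ escapes, inserting a broken Floer cylinder; or (c) $T\to\infty$, in which case the central cylinder splits into a pair of half-infinite Floer trajectories with common asymptotic limit $z\in\mathrm{Crit}(A_{H-k})$, producing an element of $\mathcal{M}_{\mathrm{SA}}(w_-,z)\times\mathcal{M}_{\mathrm{AS}}(z,w_+)$. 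For statement (1), $\dim\mathcal{M}_P(w_0,w^0)=0$ and every stratum in (a)--(c) has strictly negative virtual dimension, so the moduli space is already compact. For statements (2) and (3) the moduli space is one-dimensional and the codimension-one strata of its Gromov compactification are precisely those listed; the identification of broken limits with the stated products follows from standard gluing theorems, exactly as in \cite[Section 8]{AbbondandoloSchwarz2009}.
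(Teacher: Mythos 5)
Your overall decomposition --- action sandwich, uniform $L^{\infty}$ bounds coming from Theorems \ref{thm:bounding the lagrange multiplier}, \ref{thm:l infinity}, \ref{thm:half spaces linfinity}, a lower bound on the length $T$ of the central piece, and the standard Gromov breaking analysis --- matches what the paper does, which is to cite \cite[Proposition 8.1]{AbbondandoloSchwarz2009} and replace the $L^{\infty}$ estimates there with the present ones. Your first two steps are essentially correct.

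Your treatment of the third step (the analogue of \cite[Lemma 8.2]{AbbondandoloSchwarz2009}) is the part where I have serious concerns, and it is the only place where the paper points to a genuine lemma rather than to a routine adaptation. First, the assertion that the loops $x_n(\pm T_n,\cdot)$ converge to a \emph{common constant} loop is not justified. The matching conditions and the uniform bound on $\eta'$ do give $\eta_n(-T_n)\geq0\geq\eta_n(T_n)$ and $|\eta_n(-T_n)-\eta_n(T_n)|\to 0$, hence $\eta_n(\pm T_n)\to 0$, and the two boundary loops converge to a common loop $x_*$ because the central flow line shrinks. But nothing here forces $x_*$ to be a constant loop or a critical point of $A_{H-k}$; that conclusion would require additional input from the behaviour of the Morse cascades near the boundary $\{T=0\}$, along the lines of Theorem \ref{thm:(Properties-of-)-1}(2),(4), which you do not invoke. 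Second, the sentence about the terminal nodes projecting to ``$Z^{+}(\cdot)=Z^{-}(\cdot)$-images of the same point of $M$'' is incoherent: $Z^{+}$ and $Z^{-}$ are maps from $\mathrm{Crit}(S_{L+k})$ to $\mathrm{Crit}(A_{H-k})$, they have disjoint images (one with $\eta>0$, the other with $\eta<0$), and the terminal cascade nodes $w^{\pm}_{m}$ need not be critical points at all. Third, and most importantly, the final implication ``negative formal dimension $\Rightarrow$ after generic perturbation no such configuration exists $\Rightarrow$ $T$ is bounded below'' is a non sequitur as stated. A transversality argument can show that a putative $T=0$ boundary stratum is generically empty, but to conclude $T\geq T_0>0$ along sequences in the \emph{open} moduli space you must first establish that any sequence with $T_n\to 0$ Gromov-converges to a configuration in such a stratum; this requires a compactness analysis for the degenerating central piece that you have not carried out, and it cannot be replaced by a dimension count. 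The argument the paper appeals to in \cite[Lemma 8.2]{AbbondandoloSchwarz2009} is a quantitative one, exploiting the strict positivity of $S_{L+k}$ (cf.\ Theorem \ref{thm:(Properties-of-)-1}(1)) and the compatibility conditions of Subsection \ref{sub:Choosing-the-Morse}, not a generic-perturbation argument, so your route here is genuinely different and, as written, has a gap.
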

Theorem \ref{thm:theorem A precise} essentially follows from this
proposition; see \cite[Section 9]{AbbondandoloSchwarz2009} for the
details.

\section{\label{sec:Non-displaceability-above-the}Non-displaceability and
leaf-wise intersections above the critical value}

\subsection{Relating $RFH_{*}(A_{H-k})$ with $RFH_{*}(\Sigma_{k},T^{*}M)$}

$\ $\vspace{6 pt}

Rabinowitz Floer homology was defined originally in \cite{CieliebakFrauenfelder2009}
for restricted contact type hypersurfaces and Hamiltonians which are
constant at infinity. This was extended in \cite{CieliebakFrauenfelderPaternain2010}
to cover (amongst other things) the hypersurfaces $\Sigma_{k}$ that
we study here. A natural question therefore becomes whether the Rabinowitz
Floer homology we work with in this paper is isomorphic to that of
\cite{CieliebakFrauenfelderPaternain2010}. The aim of this section
is to prove this in the affirmative. 

Let $(g,\sigma,U,k)\in\mathcal{O}_{\textrm{reg}}$ and put $H=H_{g}+\pi^{*}U$
and $\Sigma_{k}:=H^{-1}(k)$. Let $\rho:\mathbb{R}\rightarrow(-\infty,1]$
denote a smooth function \begin{equation}
\rho(t):=\begin{cases}
t & t\in(-\infty,1-\delta]\\
1 & t\in[1+\delta,\infty)
\end{cases}\ \ \ 0\leq\rho'\leq1,\label{eq:def of rho}
\end{equation}
where $0<\delta<1/3$. Given $R>1$, let $\rho_{R}(t):=R\rho\left(\frac{t}{R}\right)$.
Let $H_{R}:T^{*}M\rightarrow\mathbb{R}$ be defined by \[
H_{R}:=\rho_{R}\circ H.
\]
Assuming $R\gg k$, the Hamiltonian $H_{R}$ satisfies $\Sigma_{k}=H_{R}^{-1}(k)$
and $X_{H_{R}}|_{\Sigma_{k}}=X_{H}|_{\Sigma_{k}}$. However the Hamiltonian
$H_{R}$ is \textbf{constant} at infinity. This makes no difference
to the proof of Theorem \ref{thm:bounding the lagrange multiplier},
or to that of Step 2 in the proof of Theorem \ref{thm:l infinity},
but the proof of Step 1 of Theorem \ref{thm:l infinity} explicitly
required the Hamiltonian to be quadratic. In the course of the proof
below we will show that the proof of Theorem \ref{thm:l infinity}
will still go though for the Hamiltonian $H_{R}$, provided $R\gg0$
is sufficiently large. However exactly what constitutes {}``sufficiently
large'' depends on the action interval $(a,b)\subseteq\mathbb{R}$.
Thus this method is not good enough to define the full Rabinowitz
Floer homology with $H_{R}$. 
\begin{rem}
\label{rem:geometrically bounded}In \cite{CieliebakFrauenfelderPaternain2010}
this is overcome by using an entirely different method to obtain $L^{\infty}$
bounds on the $x$-component of gradient flow lines. Namely, they
work with a compatible almost complex structure $J$ that is \textbf{\emph{geometrically
bounded}} outside of a compact set. We refer to \cite[Chapter V]{AudinLafontaine1994}
for the precise definition, and also for an explanation as to why
working with Hamiltonians which are constant outside of a compact
set and almost complex structures that are geometrically bounded outside
of a compact set leads to such $L^{\infty}$ bounds. Proofs that twisted
cotangent bundles are geometrically bounded can be found in \cite[Proposition 2.2]{CieliebakGinzburgKerman2004}
or \cite[Proposition 4.1]{Lu1996}. The latter proof also shows that
it is possible (if $\left\Vert \sigma\right\Vert _{\infty}$ is small
enough) to find geometrically bounded almost complex structures $J\in\mathcal{J}(\omega)\cap B_{\varepsilon_{1}}(J_{g})$.
\end{rem}
We now prove the following result.
\begin{prop}
\label{quadr is const}Given a fixed finite interval $(a,b)\subseteq\mathbb{R}$,
there exists a constant $R(a,b)>0$ such that for all $R>R(a,b)$
there is a chain complex isomorphism \[
RFH_{*}^{(a,b)}(A_{H_{R}-k})\cong RFH_{*}^{(a,b)}(A_{H-k}).
\]
\end{prop}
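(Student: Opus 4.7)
The plan is to show that for $R > R(a,b)$ sufficiently large, the moduli spaces of gradient flow lines of $A_{H-k}$ and $A_{H_R-k}$ with action in the window $(a,b)$ coincide, so that the identity map on generators induces the desired chain isomorphism.

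I would first verify that $\mbox{Crit}(A_{H_R-k})=\mbox{Crit}(A_{H-k})$ with matching data. For $R>k/(1-\delta)$ one has $\rho_R(k)=k$ and $\rho_R'(k)=1$, so $H_R^{-1}(k)=\Sigma_k$ and $X_{H_R}|_{\Sigma_k}=X_H|_{\Sigma_k}$; in fact $H_R$ coincides with $H$ on an open neighborhood of $\Sigma_k$. The critical sets therefore agree, with identical action values $A_{H_R-k}(x,\eta)=A_{H-k}(x,\eta)=\int_C\bar{x}^*\omega$ (the constraint $\int(H-k)=0$ on $\Sigma_k$ forces both correction terms to vanish). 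The linearizations at critical points only involve the Hamiltonian near $\Sigma_k$, so the Conley-Zehnder indices, the correction terms $-\frac{1}{2}\chi$, and hence the full gradings $\widehat{\mu}_h$ coincide. Selecting a common Morse function $h$ and metric $g_1$ on this shared critical set makes $RF_*^{(a,b)}(A_{H-k})$ and $RF_*^{(a,b)}(A_{H_R-k})$ identical as graded vector spaces.

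The crux is obtaining uniform $L^\infty$ bounds on flow lines of both functionals, so that for $R$ large enough their images lie in the region $\{H\leq R(1-\delta)\}$ on which $H_R=H$ and $X_{H_R}=X_H$. For $A_{H-k}$, Theorems \ref{thm:bounding the lagrange multiplier} and \ref{thm:l infinity} yield $\|\eta\|_{L^\infty}\leq C_0$ and $\|x\|_{L^\infty}\leq C_1$, hence $H(x)\leq C_2:=\frac{1}{2}C_1^2+\|U\|_\infty$, for any flow line with action in $(a,b)$. For $A_{H_R-k}$, Theorem \ref{thm:bounding the lagrange multiplier} still applies, since its proof only uses virtual restricted contact type of $\Sigma_k$ together with the equality $X_{H_R}=X_H$ in a neighborhood of $\Sigma_k$ (cf.\ the remark immediately following its statement); this gives $\|\eta\|_{L^\infty}\leq C_0$ uniformly in $R$. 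The main obstacle is the $x$-bound for $A_{H_R-k}$, since Step 1 in the proof of Theorem \ref{thm:l infinity} exploited the quadratic growth of $H$ in an essential way, whereas $H_R$ is bounded. To handle this I would instead choose $J\in\mathcal{J}(\omega)\cap B_{\varepsilon_1}(J_g)$ that is additionally geometrically bounded at infinity, which is possible when $\|\sigma\|_\infty$ is sufficiently small by \cite[Proposition 4.1]{Lu1996} (cf.\ the footnote to \eqref{eq:ball not empty} and Remark \ref{rem:geometrically bounded}). Since $H_R$ is then constant outside a compact set, the monotonicity arguments of \cite{CieliebakFrauenfelderPaternain2010} produce a bound $\|x\|_{L^\infty}\leq C_1'$ uniform in $R\geq R_0$ for any fixed $R_0\gg 0$, yielding $H(x)\leq C_2':=\frac{1}{2}(C_1')^2+\|U\|_\infty$ along such a flow line.

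Taking $R(a,b):=(1-\delta)^{-1}\max(C_2,C_2',2k)$, for $R>R(a,b)$ every gradient flow line of either functional with action in $(a,b)$ has image contained in $\{H\leq R(1-\delta)\}$, where the two Rabinowitz Floer equations coincide. The moduli spaces $\mathcal{M}(z_-,z_+)$ defining the boundary operators $\partial^{\textrm{Rab}}$ on the two subcomplexes are therefore identical, and the identity map on generators is the required chain isomorphism $RF_*^{(a,b)}(A_{H_R-k})\cong RF_*^{(a,b)}(A_{H-k})$, inducing the claimed isomorphism in homology.
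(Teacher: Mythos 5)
Your proof has a genuine gap in the step where you bound the $x$-component of gradient flow lines of $A_{H_R-k}$. You claim that since $H_R$ is constant outside a compact set, the monotonicity argument of \cite{CieliebakFrauenfelderPaternain2010} with a geometrically bounded almost complex structure yields a bound $\|x\|_{L^\infty}\leq C_1'$ that is \emph{uniform in $R$}. This is false: the monotonicity lemma controls the distance that a flow line can escape from the region where $X_{H_R}$ is nonzero, and that region is $\{H<R(1+\delta)\}$, which grows with $R$. Concretely, outside this set the flow line is $J$-holomorphic and its excursion beyond the set is bounded by a function of the energy $b-a$ alone, so the resulting bound on $|p|$ grows like $\sqrt{R}$ plus a constant. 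In particular the bound it yields does \emph{not} place the flow lines inside $\{H\leq R(1-\delta)\}$, which is exactly what your final step needs. This is precisely why the paper does not invoke the geometrically bounded argument here and instead re-derives Step 1 of Theorem \ref{thm:l infinity} for the truncated Hamiltonian $H_R$ from scratch. The paper's key technical work is to introduce auxiliary truncated functions (called $P_R$ and $f_R$) and redo the $L^2$-estimates and $\varepsilon$-density arguments so that the resulting constants $b_1,\dots,b_7'$ and ultimately $C_1'$ are \emph{independent of $R$} (though they depend on $(a,b)$), at the cost of requiring $R>R(a,b)$; only then can one conclude that the flow lines lie in a region where $H_R=H$.

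The remainder of your proposal (identification of critical sets, gradings, action values, and the $\eta$-bound via the remark after Theorem \ref{thm:bounding the lagrange multiplier}, and the final passage from a uniform $L^\infty$ bound to coincidence of moduli spaces) matches the paper's argument and is correct. To repair the proof you would need to replace the geometric-boundedness shortcut with the paper's careful redo of the Calderon--Zygmund/interpolation estimates of Step 1 of Theorem \ref{thm:l infinity} for $H_R$, verifying at each stage that the constants produced can be taken independent of $R$ once $R$ is large; the technical core is Lemma \ref{lem:the gamma lemma}, whose constant $\Gamma$ depends only on $\gamma_1,\gamma_2,\gamma_3,\gamma_*$ and not on the Hamiltonian, applied after one has shown that the sets where $\|p(s,\cdot)\|_{L^\infty(S^1)}$ is controlled are $\varepsilon_*$-dense with $R$-independent data.
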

\begin{proof}
Assuming $R$ is sufficiently large compared to $k$, since all the
critical points of $A_{H-k}$ are either points on $\Sigma_{k}$ or
parametrizations of periodic orbits of $X_{H}$ lying on $\Sigma_{k}$,
we conclude that all the critical points of $A_{H_{R}-k}$ are non-degenerate,
and that $\mbox{Crit}(A_{H_{R}-k})=\mbox{Crit}(A_{H-k})$. This shows
that the two chain complexes coincide (as \textbf{groups}): \[
RF_{*}(A_{H_{R}-k})\cong RF_{*}(A_{H-k}).
\]
Fix an almost complex structure $J\in\mathcal{J}(\omega)\cap B_{\varepsilon_{1}}(J_{g})$
and fix a finite interval $(a,b)\subseteq\mathbb{R}$. Let us denote
by $\mathcal{M}_{R}$ the set of all maps $u=(x,\eta)\in C^{\infty}(\mathbb{R}\times S^{1},T^{*}M)\times C^{\infty}(\mathbb{R},\mathbb{R})$
that satisfy the Rabinowitz Floer equation $u'(s)+\nabla A_{H_{R}-k}(u(s,\cdot))=0$
and have action bounds \[
A_{H_{R}-k}(u(\mathbb{R}))\subseteq[a,b]
\]
and satisfy\[
x(\mathbb{R},\cdot)\in\Lambda_{\alpha}T^{*}M.
\]
We will show how for $R\gg0$ large enough, one can follow through
the proof of Theorem \ref{thm:l infinity} and obtain a constant $C_{1}'>0$
that serves as a uniform $L^{\infty}$ bound for the $x$-component
of elements of $\mathcal{M}_{R}$. Here the key point is that the
constant $C_{1}'$ is \textbf{independent }of $R$. It will however
depend on the interval $(a,b)$. Anyway, this will imply the proposition,
as then it is immediate that if we choose $R$ large enough such that
\[
\left\{ (q,p)\in T^{*}M\,:\,\left|p\right|\leq C_{1}'\right\} \subseteq\left\{ (q,p)\in T^{*}M\,:\, H(q,p)\leq R\right\} 
\]
then the boundary homomorphisms of the two truncated Rabinowitz Floer
complexes necessarily coincide, and hence the two truncated Rabinowitz
Floer homologies coincide.

Firstly though let us discuss the $\eta$-component of elements of
$\mathcal{M}_{R}$. Nothing in the proof of the bound on the Lagrange
multiplier (Theorem \ref{thm:bounding the lagrange multiplier}) used
anything about the behavior of $H$ at infinity, and thus there exists
$R_{0}>0$ such that if $R>R_{0}$, the same constant $C_{0}>0$ obtained
in Theorem \ref{thm:bounding the lagrange multiplier} serves as uniform
$L^{\infty}$ bound on the $\eta$-component of any element $u=(x,\eta)\in\mathcal{M}_{R}$. 

Parts of the argument from the proof of Step 1 of Theorem \ref{thm:l infinity}
are unchanged for the new Hamiltonian $H_{R}$. Indeed, for any $u=(x,\eta)\in\mathcal{M}_{R}$
if $b_{1}:=\left\Vert J\right\Vert _{\infty}\sqrt{b-a}$ then as before\begin{equation}
\left\Vert x'\right\Vert _{L^{2}(\mathbb{R}\times S^{1})}\leq b_{1};\label{eq:b1 second time}
\end{equation}
\[
\left\Vert \eta'\right\Vert _{L^{2}(\mathbb{R})}\leq b_{1}.
\]
Fix $u=(x,\eta)\in\mathcal{M}_{R}$ and write $x=(q,p)$. Let us introduce
the auxiliary smooth function\[
P_{R}:\mathbb{R}\times S^{1}\rightarrow\mathbb{R};
\]
\[
P_{R}(s,t):=\rho_{2R}\left(\left|p(s,t)\right|^{2}\right)=2\rho_{R}\left(\frac{1}{2}\left|p(s,t)\right|^{2}\right).
\]
Note that \[
X_{H_{R}}(q,p)=\rho_{R}'(H(q,p))\, X_{H}(q,p).
\]
Since $\rho_{R}'\leq1$ we see that \eqref{eq:b0} still holds. Let
us note that for any $(q,p)\in T^{*}M$ the following two implications
hold: \[
\left|p\right|^{2}\geq2R+2R\delta\ \ \ \Rightarrow\ \ \ \rho_{R}'\left(\frac{1}{2}\left|p\right|^{2}\right)\left|p\right|^{2}=0;
\]
\[
\left|p\right|^{2}\leq4R-4R\delta\ \ \ \Rightarrow\ \ \ \rho_{4R}\left(\left|p\right|^{2}\right)=\left|p\right|^{2}
\]
(where $\delta>0$ is the constant from the definition \eqref{eq:def of rho}
of $\rho$). Thus since $\delta<1/3$ we always have\begin{equation}
\rho_{R}'\left(\frac{1}{2}\left|p\right|^{2}\right)\left|p\right|^{2}\leq\rho_{4R}\left(\frac{1}{2}\left|p\right|^{2}\right).\label{eq:first delta ineq}
\end{equation}
In fact, we can improve on this by choosing $R$ sufficiently large.
Indeed, suppose \[
R>R_{1}:=\frac{\left\Vert U\right\Vert _{\infty}}{1-3\delta}.
\]
Then for any $(q,p)\in T^{*}M$ one has \[
H(q,p)=R+R\delta\ \ \ \Rightarrow\ \ \ \left|p\right|^{2}\leq4R-4R\delta,
\]
and hence for $R>R_{1}$ we have\[
\rho_{R}'(H(q,p))\left|p\right|^{2}\leq\rho_{4R}\left(\left|p\right|^{2}\right)\ \ \ \mbox{for every }(q,p)\in T^{*}M.
\]
Thus for $R>R_{1}$, \begin{equation}
\left|X_{H_{R}}(x(s,t))\right|\leq b_{0}(1+P_{2R}(s,t)))\ \ \ \mbox{for all }(s,t)\in\mathbb{R}\times S^{1},\label{eq:xh bound-1}
\end{equation}
where $b_{0}>0$ is defined as before%
\footnote{In order to aid the reader, throughout this proof the constants $b_{i}$
that appear are the \textbf{same }as the constants $b_{i}$ from the
proof of Theorem \ref{thm:l infinity}. In some cases it is not possible
to use exactly the same constant; in this case we denote it by $b_{i}'$. %
}. 

In the truncated case, $\eta'(s)$ no longer bounds the $L^{2}$ norm
of $p(s,\cdot)$, as in \eqref{eq:eta bounding p}, but instead it
bounds the $L^{1}$ norm of $P_{R}(s,\cdot)$. Indeed, since\[
H_{R}(q,p)\geq\rho_{R}\left(\frac{1}{2}\left|p\right|^{2}\right)-\left\Vert U\right\Vert _{\infty}
\]
for every $(q,p)\in T^{*}M$, we have

\[
\eta'(s)\geq\int_{S^{1}}\frac{1}{2}P_{R}(s,t)dt-(\left\Vert U\right\Vert _{\infty}+k).
\]
The same arguments as before successively prove:
\begin{itemize}
\item $\left\Vert P_{R}\right\Vert _{L^{1}(I\times S^{1})}\leq b_{2}\max\left\{ \left|I\right|,\left|I\right|^{1/2}\right\} $
for any finite interval $I\subseteq\mathbb{R}$.
\item For any $0<\varepsilon\leq1$ the sets \[
\left\{ s\in\mathbb{R}\,:\,\left\Vert P_{R}(s,\cdot)\right\Vert _{L^{1}(S^{1})}\leq\frac{b_{2}}{\sqrt{\varepsilon}}\right\} ;
\]
\[
\left\{ s\in\mathbb{R}\,:\,\left\Vert x'(s,\cdot)\right\Vert _{L^{2}(S^{1})}^{2}\leq\frac{b_{1}}{\sqrt{\varepsilon}}\right\} 
\]
are $\varepsilon$-dense in $\mathbb{R}$.
\item For any $s\in\mathbb{R}$, $\left\Vert P_{R}(s,\cdot)\right\Vert _{L^{1}(S^{1})}\leq b_{3}$.
We will go through this one in detail: given $s\in\mathbb{R}$, choose
$s_{0}\in\mathbb{R}$ such that $\left|s-s_{0}\right|\leq1$ and $\left\Vert P_{R}(s_{0},\cdot)\right\Vert _{L^{1}(S^{1})}\leq b_{2}$.
Without loss of generality assume $s\geq s_{0}$. Then we have\begin{align*}
\left\Vert P_{R}(s,\cdot)\right\Vert _{L^{1}(S^{1})} & =\left\Vert P_{R}(s_{0},\cdot)\right\Vert _{L^{1}(S^{1})}+\int_{s_{0}}^{s}\frac{d}{dr}\left\Vert P_{R}(r,\cdot)\right\Vert _{L^{1}(S^{1})}dr\\
 & =b_{2}+2\int_{s_{0}}^{s}\int_{S^{1}}\rho_{R}'\left(\frac{1}{2}\left|p\right|^{2}\right)\left\langle p(r,t),p'(r,t)\right\rangle dtdr\\
 & \leq b_{2}+2\left|\int_{s_{0}}^{s}\int_{S^{1}}\rho_{R}'\left(\frac{1}{2}\left|p(r,t)\right|^{2}\right)\left|p(r,t)\right|^{2}dtdr\right|^{1/2}\left|\int_{s_{0}}^{s}\int_{S^{1}}\left|p'(r,t)\right|^{2}dtdr\right|^{1/2}\\
 & \overset{(*)}{\leq}b_{2}+2\left|\int_{s_{0}}^{s}\left\Vert P_{2R}(r,\cdot)\right\Vert _{L^{1}(S^{1})}dr\right|^{1/2}\left\Vert p'\right\Vert _{L^{2}((s_{0},s)\times S^{1})}\\
 & \leq b_{2}+2\sqrt{b_{2}}\left\Vert x'\right\Vert _{L^{2}((s_{0},s)\times S^{1})}\\
 & \leq b_{2}+2\sqrt{b_{2}}b_{1}.
\end{align*}
Here $(*)$ used \eqref{eq:first delta ineq} (note that the assertion
from the first bullet point also holds for $P_{2R}$!).
\end{itemize}
Note however that this last assertion does \textbf{not }imply that
$\left\Vert p(s,\cdot)\right\Vert _{L^{2}(S^{1})}\leq b_{3}$ for
all $s\in\mathbb{R}$, even if $R\gg b_{3}$. In order to prove this
we we argue as follows. Let $\vartheta_{R}:\mathbb{R}\rightarrow[0,1]$
denote a smooth function such that agrees with $\rho_{R}$ for $t\geq\delta$,
and is equal to $\delta/2$ for $t\leq0$, again with $0\leq\vartheta_{R}'\leq1$.
We now introduce another auxiliary smooth function\[
f_{R}:\mathbb{R}\times S^{1}\rightarrow\mathbb{R};
\]
\[
f_{R}(s,t):=\vartheta_{R}(\left|p(s,t)\right|).
\]
Observe that for any $(s,t)\in\mathbb{R}\times S^{1}$, if $R>R_{2}:=\max\{R_{0},R_{1}\}$
then \begin{align*}
\dot{f}_{R}(s,t) & \leq\left|\dot{p}(s,t)\right|\\
 & \leq\left|\dot{x}(s,t)\right|\\
 & \leq\left|J(t,x(s,t))\cdot x'(s,t)\right|+\left|\eta(s)\right|\left|X_{H_{R}}(x(s,t))\right|\\
 & \leq\left|J(t,x(s,t))\cdot x'(s,t)\right|+C_{0}b_{0}(1+P_{2R}(s,t)).
\end{align*}
Thus for $R>R_{2}$, \[
\left\Vert \dot{f}_{R}(s,\cdot)\right\Vert _{L^{1}(S^{1})}\leq\left\Vert J\right\Vert _{\infty}\left\Vert x'(s,\cdot)\right\Vert _{L^{2}(S^{1})}+C_{0}b_{0}\left(1+\left\Vert P_{2R}(s,\cdot)\right\Vert _{L^{1}(S^{1})}\right).
\]
Now observe that \[
\left\Vert f_{R}(s,\cdot)\right\Vert _{L^{1}(S^{1})}\leq1+\left\Vert P_{R}(s,\cdot)\right\Vert _{L^{1}(S^{1})}
\]
(since at any point $t\in S^{1}$, either $f_{R}(s,t)\leq1$ or $f_{R}(s,t)\leq P_{R}(s,t)$).
It now follows from \eqref{eq:s dash} and \eqref{eq:linfty p} that
if\[
b_{4}':=N(1+b_{3})+NC_{0}b_{0}(1+b_{3}^{2}),
\]
where $N>0$ is defined as in \eqref{eq:N} then for any $0<\varepsilon\leq1$
the subset \[
\left\{ s\in\mathbb{R}\,:\,\left\Vert f_{R}(s,\cdot)\right\Vert _{L^{\infty}(S^{1})}\leq b'_{4}+\frac{b_{5}}{\sqrt{\varepsilon}}\right\} 
\]
is $\varepsilon$-dense. Let \[
R_{3}:=\max\left\{ R_{2},\frac{b'_{4}+b_{5}}{1-\delta}\right\} .
\]
Then for $R>R_{3}$, we know that the set $\{s\in\mathbb{R}\,:\,\left\Vert p(s,\cdot)\right\Vert _{L^{\infty}(S^{1})}\leq b_{4}'+b_{5}\}$
is $1$-dense in $\mathbb{R}$, and then arguing as before, we discover
that there exists a constant $b_{3}'>0$ such that \begin{equation}
\left\Vert p(s,\cdot)\right\Vert _{L^{2}(S^{1})}\leq b_{3}'\ \ \ \mbox{for all }s\in\mathbb{R}.\label{eq:b3 dash}
\end{equation}
Next, since $\rho_{R}'\leq1$, the same argument as before shows that
for all $s_{0}<s_{1}$ we have \begin{equation}
\left\Vert \nabla p\right\Vert _{L^{2}((s_{0},s_{1})\times S^{1})}^{2}\leq b_{6}\left(\left|s_{1}-s_{0}\right|+b_{1}^{2}\right)+b_{6}\left\Vert p\right\Vert _{L^{4}((s_{0},s_{1})\times S^{1})}^{4}.\label{eq:b6 second time}
\end{equation}
Now let us choose $\varepsilon=\varepsilon_{*}$ where $\varepsilon_{*}=\varepsilon_{*}(b_{3}',b_{6})$
is the constant from Lemma \ref{lem:the gamma lemma}, and choose
\[
R>R(a,b):=\max\left\{ R_{3},b'_{4}+\frac{b_{5}}{\sqrt{\varepsilon_{*}}}\right\} .
\]
Then for $R>R(a,b)$ the set \begin{equation}
\left\{ s\in\mathbb{R}\,:\,\left\Vert p(s,\cdot)\right\Vert _{L^{\infty}(S^{1})}\leq b'_{4}+\frac{b_{5}}{\sqrt{\varepsilon_{*}}}\right\} \label{eq:final eq for gamma lemma}
\end{equation}
is $\varepsilon_{*}$-dense in $\mathbb{R}$. Thus by \eqref{eq:b3 dash},
\eqref{eq:b6 second time} and \eqref{eq:final eq for gamma lemma},
Lemma \ref{lem:the gamma lemma} implies that there exists a constant
$b_{7}'=\Gamma(b_{1},b_{3}',b_{6},b_{4}'+b_{5}/\sqrt{\varepsilon_{*}})$
such that for $R>R(a,b)$ the following holds: for any finite interval
$I\subseteq\mathbb{R}$ we have \[
\left\Vert \nabla p\right\Vert _{L^{2}(I\times S^{1})}\leq b_{7}'(1+\left|I\right|^{1/2}).
\]

In other words, for $R>R(a,b)$, Step 1 of Theorem \ref{thm:l infinity}
goes through, and the constant $K'>0$ that we obtain is independent
of $R$. Moving onto Step 2, we note that the proof of Step 2 used
nothing about the Hamiltonian other than the fact that Step 1 holds,
and that \eqref{eq:b0} holds. Thus the proof goes through immediately
for the Hamiltonian $H_{R}$ with $R>R(a,b)$. Moreover the constant
$C'_{1}>0$ that Step 2 produced depended only on the constants $K'$
and $b_{0}$. Thus we have proved that there exists a constant $C_{1}'>0$
such that if $R>R(a,b)$ and $u=(x,\eta)\in\mathcal{M}_{R}$ then
\[
\left\Vert x\right\Vert _{L^{\infty}(\mathbb{R}\times S^{1})}\leq C_{1}'.
\]
By the remarks at the beginning of the proof this implies the result.
\end{proof}
Let us denote by $RFH_{*}(\Sigma_{k},T^{*}M)$ the Rabinowitz Floer
homology of the hypersurface $\Sigma_{k}$ as defined%
\footnote{Technically the Rabinowitz Floer homology $RFH_{*}(\Sigma_{k},T^{*}M)$
as defined in \cite{CieliebakFrauenfelderPaternain2010} is only defined
for contractible loops. If however one uses the observation that $\omega$
is symplectically atoroidal then the construction in \cite{CieliebakFrauenfelderPaternain2010}
allows one to define Rabinowitz Floer homology $RFH_{*}(\Sigma_{k},T^{*}M)$
for any free homotopy class; see Remark \ref{rem:remark on cfp rfh}.%
} in \cite{CieliebakFrauenfelderPaternain2010}. We now prove that
$RFH_{*}(\Sigma_{k},T^{*}M)\cong RFH_{*}(A_{H-k})$. It is sufficient
to prove this when $(g,\sigma,U,k)\in\mathcal{O}_{\textrm{reg}}$
(cf. Remark \ref{rem:inv under g}). Since the Hamiltonian $H_{R}$
is constant outside of a compact set, using the invariance result
\cite[Theorem 1.1]{CieliebakFrauenfelderPaternain2010} we conclude
that we can compute $RFH_{*}(\Sigma_{k},T^{*}M)$ using%
\footnote{Here we are implicitly using the last sentence of Remark \ref{rem:geometrically bounded}.%
} $H_{R}$, and thus for $R>R(a,b)$: \[
RFH_{*}^{(a,b)}(A_{H_{R}-k})\cong RFH_{*}^{(a,b)}(\Sigma_{k},T^{*}M).
\]
Then using \cite[Theorem A]{CieliebakFrauenfelder2009a}, which tells%
\footnote{This is the only time in the entire paper where it is absolutely \textbf{essential}
that we used field coefficients for the Rabinowitz Floer homology
rather than, say, $\mathbb{Z}$-coefficients.%
} us that we can determine both the Rabinowitz Floer homologies $RFH_{*}(A_{H-k})$
and $RFH_{*}(\Sigma_{k},T^{*}M)$ from the truncated Rabinowitz Floer
homologies via:\[
RFH_{*}(A_{H-k})\cong\underset{a\downarrow-\infty}{\underrightarrow{\lim}}\underset{b\uparrow\infty}{\underleftarrow{\lim}}RFH_{*}^{(a,b)}(A_{H-k});
\]
\[
RFH_{*}(\Sigma_{k},T^{*}M)\cong\underset{a\downarrow-\infty}{\underrightarrow{\lim}}\underset{b\uparrow\infty}{\underleftarrow{\lim}}RFH_{*}^{(a,b)}(\Sigma_{k},T^{*}M).
\]
We conclude that \begin{equation}
RFH_{*}(A_{H-k})\cong RFH_{*}(\Sigma_{k},T^{*}M).\label{eq:main conc}
\end{equation}

We can now prove the main result of this paper. In the proof below
for clarity we will continue to write $RFH_{*}(A_{H-k})$ for the
Rabinowitz Floer homology as defined in this paper, and $RFH_{*}(\Sigma_{k},T^{*}M)$
for the Rabinowitz Floer homology defined in \cite{CieliebakFrauenfelderPaternain2010},
despite the fact that we have just proved the two are isomorphic.
\begin{proof}
\emph{(of Theorem }\ref{thm:my main theorem}\emph{)}

We are given a closed weakly exact 2-form $\sigma\in\Omega_{\textrm{we}}^{2}(M)$
and a potential $U\in C^{\infty}(M,\mathbb{R})$, together with a
value $k\in\mathbb{R}$ such that $k>c(g,\sigma,U)$. Put $H:=H_{g}+\pi^{*}U$
and $\Sigma_{k}:=H^{-1}(k)$. We will compute $RFH_{*}(\Sigma_{k},T^{*}M)$.
By Remark \ref{rem:inv under g} we may assume that $(g,\sigma,U,k)\in\mathcal{O}_{\textrm{reg}}$.
We begin by choosing $r>0$ such that $\left\Vert r\sigma\right\Vert _{\infty}$
is sufficiently small such that the conclusion of Theorem \ref{thm:theorem A precise}
holds. Let us temporarily write $RFH_{*}(\Sigma_{k},T^{*}M;\omega)$
to indicate which symplectic form we are working with. Then \[
RFH_{*}(\Sigma_{k},T^{*}M;\omega)\cong RFH_{*}(\Sigma_{k},T^{*}M;r\omega).
\]
To see we argue as follows. If $F\in C_{c}^{\infty}(T^{*}M,\mathbb{R})$
is a \textbf{defining Hamiltonian }for $(H,\Sigma_{k},\omega)$ in
the sense of \cite{CieliebakFrauenfelderPaternain2010}, that is,
$F$ is a compactly supported Hamiltonian such that $\Sigma_{k}=F^{-1}(0)$,
and $X_{F}|_{\Sigma_{k}}=X_{H}|_{\Sigma_{k}}$, then since$X_{F}^{\omega}=X_{rF}^{r\omega}$
(here $X_{F}^{\omega}$ denotes the symplectic gradient of $F$ with
respect to $\omega$, etc.), the Hamiltonian $rF$ is a defining Hamiltonian
for $(H,\Sigma_{k},r\omega)$. Next, there is a natural identification
between flow lines of the two Rabinowitz action functionals $A_{F}$
and $A_{rF}$: if $u(s,t)=(x(s,t),\eta(s))$ satisfies $u'(s)+\nabla A_{F}(u(s))=0$
then $u_{r}(s,t):=(x(s,t),\eta(rs))$ satisfies $u_{r}'(s)+\nabla A_{rF}(u_{r}(s))=0$,
and vice versa. This identification defines a chain isomorphism between
the two chain complexes.

Set $\omega_{r}:=\omega_{0}+r\pi^{*}\sigma$ so that $\omega=\omega_{1}$.
Next we claim \[
RFH_{*}(\Sigma_{k},T^{*}M;r\omega)\cong RFH_{*}(H_{r}^{-1}(r^{2}k),T^{*}M;\omega_{r}),
\]
where $H_{r}(q,p):=H_{g}+r^{2}\pi^{*}U$ (note that the latter is
well defined, as by Lemma \ref{lem:scaling c} we have $k>c(g,\sigma,U)$
if and only if $r^{2}k>c(g,r\sigma,r^{2}U)$). Indeed, the exact symplectomorphism
$\varphi_{r}:T^{*}M\rightarrow T^{*}M$ defined by \[
\varphi_{r}(q,p):=(q,rp)
\]
satisfies \[
\varphi_{r}^{*}\omega_{r}=r\omega;
\]
\[
\varphi_{r}^{*}H_{r}=r^{2}H,
\]
and hence $\varphi_{r}(\Sigma_{k})=H_{r}^{-1}(r^{2}k)$. The Rabinowitz
Floer homology of \cite{CieliebakFrauenfelderPaternain2010} is invariant
under such symplectomorphisms, and hence the claim follows. Next,
by \eqref{eq:main conc} we have

\[
RFH_{*}(H_{r}^{-1}(r^{2}k),T^{*}M;\omega_{r})\cong RFH_{*}(A_{H_{r}-r^{2}k};\omega_{r}),
\]
and finally by our choice of $r$ we can compute $RFH_{*}(A_{H_{r}-r^{2}k};\omega_{r})$
via Theorem \ref{thm:theorem A precise}.
\end{proof}

\subsection{Leaf-wise intersections}

$\ $\vspace{6 pt}

We conclude this paper by showing how the fact that $RFH_{*}(\Sigma_{k},T^{*}M)$
is non-zero for $k>c(g,\sigma,U)$ implies the existence of \textbf{leaf-wise
intersections}, following \cite{AlbersFrauenfelder2010c,AlbersFrauenfelder2008}.
Throughout this section assume that $(g,\sigma,U,k)\in\mathcal{O}$
(in general we do \textbf{not}\emph{ }need to assume that $(g,\sigma,U,k)\in\mathcal{O}_{\textrm{reg}}$,
although this will be needed to get infinitely many leaf-wise intersections),
and put $H:=H_{g}+\pi^{*}U$ and $\Sigma_{k}:=H^{-1}(k)$.

The hypersurface $\Sigma_{k}$ is foliated by the leaves $\{\mathcal{L}_{x}\,:\, x\in\Sigma_{k}\}$,
where \[
\mathcal{L}_{x}:=\{\phi_{t}^{H}(x)\,:\, t\in\mathbb{R}\}.
\]
Let $\mbox{Ham}_{c}(T^{*}M,\omega)$ denote the set of compactly supported
1-periodic Hamiltonian diffeomorphisms of the symplectic manifold
$(T^{*}M,\omega)$, that is \[
\mbox{Ham}_{c}(T^{*}M,\omega):=\left\{ \phi_{1}^{F}\,:\, F\in C_{c}^{\infty}(S^{1}\times T^{*}M,\mathbb{R})\right\} ,
\]
where $\phi_{t}^{F}$ is the flow of $X_{F}$; the latter being the
time-dependent symplectic gradient of $F$ with respect to $\omega$.
Given $\psi\in\mbox{Ham}_{c}(T^{*}M,\omega)$, a point $x\in\Sigma_{k}$
is called a \textbf{leaf-wise intersection point for $\psi$}\emph{
}if $\psi(x)\in\mathcal{L}_{x}$.\newline

In order to explain the beautiful idea of Albers and Frauenfelder
that links Rabinowitz Floer homology to leaf-wise intersections, we
will need some preliminary definitions. First let us define \[
\mathcal{X}:=\left\{ \chi\in C^{\infty}(S^{1},\mathbb{R})\,:\,\int_{S^{1}}\chi(t)dt=1,\ \mbox{supp}(\chi)\subseteq(0,1/2)\right\} .
\]

We will say that a time-dependent Hamiltonian $G:S^{1}\times T^{*}M\rightarrow\mathbb{R}$
is \textbf{$H$-admissible}\emph{ }if:
\begin{enumerate}
\item $G(t,x)=\chi(t)G_{0}(x)$ for some $\chi\in\mathcal{X}$ and some
\textbf{compactly supported} $G_{0}\in C_{c}^{\infty}(T^{*}M,\mathbb{R})$.
\item $G_{0}^{-1}(0)=\Sigma_{k}$.
\item It holds that $X_{G_{0}}|_{\Sigma_{k}}=X_{H}|_{\Sigma_{k}}$. 
\end{enumerate}
Let us write $\mathcal{H}(H)$ for the set of $H$-admissible Hamiltonians.
Finally set \[
\mathcal{F}:=\left\{ F\in C_{c}^{\infty}(S^{1}\times T^{*}M,\mathbb{R})\,:\, F(t,\cdot)\equiv0\ \mbox{for }t\in[1/2,1]\right\} .
\]
It is easy to see that $\mathcal{F}$ generates $\mbox{Ham}_{c}(T^{*}M,\omega)$
in the sense that given any $\psi\in\mbox{Ham}_{c}(T^{*}M,\omega)$,
there exists $F\in\mathcal{F}$ such that $\psi=\phi_{1}^{F}$ (see
for example \cite[Lemma 2.3]{AlbersFrauenfelder2010c}). 

Let us call a pair $(G,F)\in\mathcal{H}(H)\times\mathcal{F}$ a \textbf{Moser
pair} for $\Sigma_{k}$. Given a Moser pair $(G,F)$ for $\Sigma_{k}$,
define the \textbf{perturbed Rabinowitz action functional}\emph{ }$A_{G-k}^{F}:\Lambda T^{*}M\times\mathbb{R}\rightarrow\mathbb{R}$
by\[
A_{G-k}^{F}(x,\eta):=\int_{C}\bar{x}^{*}\omega-\eta\int_{S^{1}}G(t,x)dt-\int_{S^{1}}F(t,x)dt
\]
(where $\bar{x}$ and $C$ are defined as before). A short calculation
shows that \[
\mbox{Crit}(A_{G-k}^{F})=\left\{ (x,\eta)\in C^{\infty}(S^{1},T^{*}M)\times\mathbb{R}\,:\,\dot{x}=\eta\chi(t)X_{G_{0}}(x)+X_{F}(t,x),\ \int_{S^{1}}\chi(t)G_{0}(x)dt=0\right\} .
\]

The key observation of Albers and Frauenfelder that makes the whole
approach work is the following lemma \cite[Proposition 2.4]{AlbersFrauenfelder2010c}.
\begin{lem}
\label{lem:albersfrau}Suppose $(x,\eta)\in\mbox{\emph{Crit}}(A_{G-k}^{F})$.
Then if $\psi=\phi_{1}^{F}$ and $y:=x(1/2)\in\Sigma$ then $\psi(y)=\mathcal{L}_{y}$,
that is, $y$ is a leaf-wise intersection point for $\psi$ in $\Sigma_{k}$.\end{lem}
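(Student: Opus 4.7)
The plan is to use the support conditions on $\chi$ and $F$ to decouple the critical-point equation into two independent pieces on complementary halves of $S^1$, and then exploit the Lagrange-multiplier constraint together with $X_{G_0}|_{\Sigma_k}=X_H|_{\Sigma_k}$ to show that $\psi(y)$ sits on the $H$-orbit through $y$.

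First, I would restrict the critical point equation
\[
\dot x=\eta\chi(t)X_{G_0}(x)+X_F(t,x)
\]
to the two halves of $S^1$ on which $X_F$ and $\eta\chi X_{G_0}$ respectively vanish. On the half where $X_F$ is zero, $x$ satisfies the autonomous-in-space equation $\dot x=\eta\chi(t)X_{G_0}(x)$; since $X_{G_0}$ does not depend on $t$ this integrates to $x(t)=\phi^{G_0}_{s(t)}(x(0))$ with $s(t)=\eta\int_0^t\chi(\tau)\,d\tau$, and the normalization $\int\chi=1$ shows that the endpoint of this segment is $\phi_\eta^{G_0}$ applied to the starting point. On the complementary half $x$ satisfies $\dot x=X_F(t,x)$, and because $F$ vanishes off this half the segment endpoint map coincides with $\psi=\phi_1^F$. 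Combining these two pieces with the loop condition $x(0)=x(1)$ produces an identity of the form $\psi(y)=\phi_{-\eta}^{G_0}(y)$, where $y:=x(1/2)$.

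Next I would extract $y\in\Sigma_k$ from the Lagrange-multiplier equation $\int_{S^1}\chi(t)G_0(x(t))\,dt=0$. On the support of $\chi$ the loop evolves by $\dot x=\eta\chi X_{G_0}$, so $\tfrac{d}{dt}G_0(x(t))=\eta\chi\,dG_0(X_{G_0})\equiv 0$; hence $G_0\circ x$ is constant along that interval and equal to $G_0(y)$. Since $\int\chi=1$, the constraint collapses to $G_0(y)=0$, i.e.\ $y\in G_0^{-1}(0)=\Sigma_k$. The admissibility condition $X_{G_0}|_{\Sigma_k}=X_H|_{\Sigma_k}$ now implies that $\Sigma_k$ is invariant under $\phi^{G_0}_t$ and that $\phi^{G_0}_t|_{\Sigma_k}=\phi^H_t|_{\Sigma_k}$, so feeding $y\in\Sigma_k$ into the identity of the previous paragraph yields $\psi(y)=\phi_{-\eta}^H(y)\in\mathcal{L}_y$, which is the assertion of the lemma.

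The main bookkeeping obstacle will be matching the sign and support conventions for $\chi$ and $F$ so that the two decoupled flows really do act on disjoint halves of $S^1$ and so that $\phi_1^F$ coincides exactly with the $F$-segment endpoint map. Once that is pinned down correctly, the rest of the argument is a clean reparametrization together with the trivial invariance of $G_0$ under its own flow, with no analytic input required.
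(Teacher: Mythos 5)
Your proposal is correct and coincides with the paper's own argument: on the half of $S^1$ where $X_F$ vanishes the critical-point equation reduces to the $\chi$-reparametrized $G_0$-flow (along which $G_0\circ x$ is constant, so the multiplier constraint forces $y\in\Sigma_k$), on the complementary half it reduces to the $F$-flow so that the segment endpoint is $\psi(y)$, and the loop condition together with $X_{G_0}|_{\Sigma_k}=X_H|_{\Sigma_k}$ gives $\psi(y)=\phi^{H}_{-\eta}(y)\in\mathcal{L}_y$. Your caution about support conventions is indeed warranted: as printed the paper defines $\mathcal{F}$ by $F(t,\cdot)\equiv 0$ for $t\in[1/2,1]$, which overlaps with $\mathrm{supp}(\chi)\subseteq(0,1/2)$, whereas the proof (and the Albers--Frauenfelder convention it follows) requires $F$ to vanish for $t\in[0,1/2]$ so that the $\chi$- and $F$-supports are disjoint.
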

\begin{proof}
For $t\in[0,1/2]$ we have $G_{0}(x(t))$ constant, since $X_{F}(t,\cdot)=0$,
and hence $x(t)\in\Sigma_{k}$ for $t\in[0,1/2]$. For $t\in[1/2,1]$,
$x(t)$ satisfies $\dot{x}(t)=X_{F}(t,x(t))$ and hence $x(1)=\psi(x(1/2))$.
Thus if $y:=x(1/2)$ then $y$ and $\psi(y)$ both lie in $\Sigma_{k}$.
Moreover since on $[0,1/2]$ we have $\dot{x}(t)=\eta\chi(t)G_{0}(x(t))$
we have $\psi(y)=x(0)\in\mathcal{L}_{y}$. The proof is complete. 
\end{proof}
Let us say that a leaf-wise intersection point $y\in\Sigma_{k}$ for
$\psi\in\mbox{Ham}_{c}(T^{*}M,\omega)$ is a \textbf{periodic leaf-wise
intersection point for $\psi$}\emph{ }if the leaf $\mathcal{L}_{x}$
is a closed orbit of $\phi_{t}^{H}$. It is clear from the proof above
that the map $\mbox{Crit}(A_{G-k}^{F})\rightarrow\{\mbox{leaf-wise intersection points for }\phi_{1}^{F}\}$
is injective if there do not exist any periodic leaf-wise intersection
points for $\psi$.\newline

We will now state the two analytic results about the perturbed twisted
Rabinowitz action functional $A_{G-k}^{F}$ that allow one to do Rabinowitz
Floer homology with it. The proof of the first theorem is essentially
identical to \cite[Theorem 2.14]{AlbersFrauenfelder2010c} and \cite[Theorem 3.3]{AlbersFrauenfelder2008}.
\begin{thm}
Fix $G\in\mathcal{H}(H)$. Let $\mathcal{F}_{\textrm{\emph{reg}}}(G)\subseteq\mathcal{F}$
denote the set of functions $F$ such that $A_{G-k}^{F}$ is a Morse
function. Then $\mathcal{F}_{\textrm{\emph{reg}}}(G)$ is residual
in $\mathcal{F}$. Moreover if $(g,\sigma,U,k)\in\mathcal{O}_{\textrm{\emph{reg}}}$
then the set $\widetilde{\mathcal{F}}_{\textrm{\emph{reg}}}(G)\subseteq\mathcal{F}_{\textrm{\emph{reg}}}(G)$
consisting of those functions $F\in\mathcal{F}_{\textrm{\emph{reg}}}(G)$
for which there do not exist any periodic leaf-wise intersection points
for $\phi_{1}^{F}$ in $\Sigma_{k}$, is also residual in $\mathcal{F}$.
\end{thm}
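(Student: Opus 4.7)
The plan follows the strategy of \cite[Theorem 2.14]{AlbersFrauenfelder2010c} and \cite[Theorem 3.3]{AlbersFrauenfelder2008}, which rest on the Sard-Smale theorem applied to a universal moduli space, combined with Floer's $C^{\infty}_{\epsilon}$ trick to pass from a Banach setting to the $C^{\infty}$ topology on $\mathcal{F}$.

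For the first assertion I would introduce the universal moduli space
\[
\widetilde{\mathcal{M}}(G) := \{(x,\eta,F) \in \Lambda T^*M \times \mathbb{R} \times \mathcal{F} \,:\, (x,\eta)\in\mbox{Crit}(A_{G-k}^{F})\}
\]
and realise it as the zero set of a smooth section $\mathcal{S}$ of a Banach bundle over $\Lambda T^*M \times \mathbb{R} \times \mathcal{F}$ whose two components encode the equations
\[
\dot{x} = \eta\chi(t)X_{G_0}(x) + X_{F}(t,x), \qquad \int_{S^{1}}\chi(t)G_0(x(t))\,dt = 0.
\]
The linearisation of $\mathcal{S}$ in the $F$-direction surjects onto any cokernel of its vertical linearisation, because at a given point of $S^{1}\times T^*M$ one can prescribe $X_{F}(t,x)$ to be an arbitrary Hamiltonian vector field (with support confined to $t \in (0,1/2)$ and to an arbitrarily small neighbourhood in $T^*M$). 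The implicit function theorem then makes $\widetilde{\mathcal{M}}(G)$ a Banach manifold and the projection $\pi : \widetilde{\mathcal{M}}(G) \to \mathcal{F}$ a Fredholm map of index $0$. By Sard-Smale the set of regular values of $\pi$ is residual, and $F$ is a regular value precisely when the vertical linearisation---which is nothing but the Hessian of $A_{G-k}^{F}$---is an isomorphism at every critical point; equivalently, when $A_{G-k}^{F}$ is Morse. The $C^{\infty}$ statement is then obtained by the usual Floer $C^{\infty}_{\epsilon}$ device.

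For the second assertion I would combine this with Lemma \ref{lem:albersfrau}. Under the hypothesis $(g,\sigma,U,k)\in\mathcal{O}_{\textrm{reg}}$, Corollary \ref{pro:generic-1} guarantees that $\mbox{Crit}(A_{H-k})$ consists of $\Sigma_k \times \{0\}$ together with a discrete, and hence countable, union of circles $\{\gamma_{i}\}_{i\in\mathbb{N}}$, each tracing out an embedded periodic orbit of $X_H$ on $\Sigma_k$. Because a periodic leaf-wise intersection point for $\phi_{1}^{F}$ must lie on some $\gamma_{i}$ and be mapped back into $\gamma_{i}$ by $\phi_{1}^{F}$, it suffices to arrange
\[
\phi_{1}^{F}(\gamma_{i}) \cap \gamma_{i} = \emptyset \qquad \text{for every } i\in\mathbb{N}.
\]
For fixed $i$ the set $U_{i} \subseteq \mathcal{F}_{\textrm{reg}}(G)$ of $F$ with this property is $C^{1}$-open by compactness of $\gamma_i$. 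To verify density I would apply parametric transversality to the evaluation map $\textrm{ev}_{i} : \gamma_{i} \times \mathcal{F} \to T^{*}M$, $(y,F) \mapsto \phi_{1}^{F}(y)$: taking $F$-variations supported near a single point $\phi_{s}^{F}(y_{0})$ for some $s\in(0,1/2)$, and using that $d\phi_{1-s}^{F}$ is an isomorphism, shows that $\textrm{ev}_{i}$ is a submersion and hence transverse to $\gamma_{i}\subseteq T^{*}M$. The parametric Sard-Smale theorem then yields a residual set of $F$ for which $\phi_{1}^{F}|_{\gamma_{i}}$ is transverse to $\gamma_{i}$; since $\dim\gamma_{i} + \dim\gamma_{i} - \dim T^{*}M = 2-2n < 0$ for $n\geq 2$, transversality in this dimension range forces the intersection to be empty. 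Intersecting these residual sets over $i\in\mathbb{N}$ with $\mathcal{F}_{\textrm{reg}}(G)$, Baire's theorem yields the residuality of $\widetilde{\mathcal{F}}_{\textrm{reg}}(G)$.

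The principal technical obstacle is, as always in transversality arguments of this flavour, the fact that $\mathcal{F}$ is not itself a Banach space, so that Sard-Smale does not apply directly; this is dealt with by the standard approximation by Banach spaces of $C^{\infty}_{\epsilon}$-functions and an intersection procedure, exactly as in \cite[Section 2]{AlbersFrauenfelder2010c}. A secondary point to check is that the $F$-variations we employ can be chosen to vanish on $[1/2,1]\times T^{*}M$, which is arranged by multiplying by a cutoff in the $t$-variable; no essential freedom is lost because the propagation of variations along the flow still permits us to hit any prescribed tangent vector at $t=1$.
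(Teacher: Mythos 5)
Your proposal reconstructs the Sard--Smale argument that the paper itself only gestures at, deferring to \cite[Theorem 2.14]{AlbersFrauenfelder2010c} for the first assertion and \cite[Theorem 3.3]{AlbersFrauenfelder2008} for the second; your outline (universal moduli space plus Floer's $C^{\infty}_{\varepsilon}$ device for the Morse property, then displacing each of the countably many closed characteristics of $\Sigma_k$ via parametric transversality and the dimension count $2-2n<0$) is precisely the approach of those references, so you are in agreement with the paper's intended proof. One point worth tightening in a written-out version: in the first part, the $F$-variations land only in the loop component of the target, not the $\mathbb{R}$-component carried by the constraint $\int_{S^1}\chi G_0(x)\,dt=0$; surjectivity onto that last factor has to come from the $\eta$-variation (using $X_{G_0}|_{\Sigma_k}\ne 0$) together with unique continuation for the adjoint ODE after the $F$-variations have killed the cokernel on $(0,1/2)$. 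This is standard in the Rabinowitz setting but is the one step in your sketch that is glossed over.
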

The following result is proved exactly as in \cite[Theorem 2.9]{AlbersFrauenfelder2010c},
aside from the fact that one needs to use the modifications already
present in the proof of Theorem \ref{thm:bounding the lagrange multiplier}
above to deal with the fact that $\Sigma_{k}$ is only of virtual
restricted contact type.
\begin{thm}
Let $-\infty<a<b<\infty$ and $\alpha\in[S^{1},M]$, and let $\mathcal{M}$
denote the set of gradient flow lines $u\in C^{\infty}(\mathbb{R}\times S^{1},T^{*}M)\times C^{\infty}(\mathbb{R},\mathbb{R})$
of $A_{G-k}^{F}$ (with respect to a suitable compatible almost complex
structure) such that $A_{G-k}^{F}(u(\mathbb{R}))\subseteq[a,b]$ and
$x(\mathbb{R},\cdot)\subseteq\Lambda_{\alpha}T^{*}M$. Then $\mathcal{M}$
is precompact in $C^{\infty}(\mathbb{R}\times S^{1},T^{*}M)\times C^{\infty}(\mathbb{R}\times S^{1},\mathbb{R})$,
where this space is given the $C_{\textrm{\emph{loc}}}^{\infty}$
topology.
\end{thm}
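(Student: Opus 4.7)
The plan is to follow the proof of \cite[Theorem 2.9]{AlbersFrauenfelder2010c}, in which the analogous $C^\infty_{\textrm{loc}}$-precompactness result is established for perturbed Rabinowitz action functionals under the stronger assumption that $\Sigma_k$ is of restricted contact type. The single substantive modification is that each appeal to the Liouville form on $T^*M$ must be replaced by an appeal to the primitive $\widetilde{\lambda}$ of $\widetilde{\omega}$ on $T^*\widetilde{M}$ provided by virtual restricted contact type (\cite[Lemma 5.1]{CieliebakFrauenfelderPaternain2010}), exactly as in the proof of Theorem \ref{thm:bounding the lagrange multiplier}.

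Since $\omega$ is symplectically aspherical and $c_1(T^*M,\omega) = 0$, bubbling is precluded, and standard elliptic bootstrapping reduces the result to uniform $L^\infty$-bounds on $u = (x,\eta) \in \mathcal{M}$. Throughout I fix $J \in \mathcal{J}(\omega) \cap B_{\varepsilon_1}(J_g)$ which, for $\|\sigma\|_\infty$ small enough, may additionally be taken geometrically bounded outside a compact set (cf.\ Remark \ref{rem:geometrically bounded}).

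The bound on the Lagrange multiplier $\eta$ is obtained by rerunning the two-step argument of Theorem \ref{thm:bounding the lagrange multiplier}. The first step---showing that sufficiently small $\|\nabla A^F_{G-k}(x,\eta)\|_J$ forces the loop $x(S^1)$ into a neighbourhood of $\Sigma_k$ on which $\widetilde\lambda(X_{\widetilde{G_0}}) \geq \delta$---transfers verbatim, using only that $G_0^{-1}(0) = \Sigma_k$, that $X_{G_0}|_{\Sigma_k} = X_H|_{\Sigma_k}$, and that the extra vector field $X_F(t,\cdot)$ is uniformly bounded. The second step consists of lifting to $T^*\widetilde{M}$ and integrating $\widetilde\lambda$ around a lifted loop. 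Relative to the unperturbed computation there are two new error terms: a contribution $\int_{S^1} F(t,x)\,dt$ in the action identity, and a contribution $\int_0^1 \widetilde\lambda(X_F(t,\widetilde x))\,dt$ arising from the perturbation of the Rabinowitz equation. Both are bounded by constants depending only on $\|F\|_\infty$, $\|X_F\|_\infty$ and $\|\widetilde\lambda|_{\widetilde H^{-1}([k-\delta,k+\delta])}\|_\infty$, hence independently of $u$. Consequently the key implication $\|\nabla A^F_{G-k}(x,\eta)\|_J \leq \rho_0 \Rightarrow |\eta| \leq \rho_1(|A^F_{G-k}(x,\eta)| + c)$ survives, and integration along the flow line over intervals of uncontrolled gradient (as at the end of the proof of Theorem \ref{thm:bounding the lagrange multiplier}) yields $\|\eta\|_\infty \leq C_0$.

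Once $\eta$ is uniformly bounded the $L^\infty$-bound on $x$ is comparatively straightforward: since $G_0$ and $F(t,\cdot)$ are compactly supported on $T^*M$, outside a fixed compact set the gradient flow equation reduces to the $J$-holomorphic curve equation, and geometrical boundedness of $(T^*M,\omega)$ together with the action bound yields a uniform $L^\infty$-estimate via the standard monotonicity argument (\cite[Chapter V]{AudinLafontaine1994}); alternatively one can rerun Step 2 of the proof of Theorem \ref{thm:l infinity}, a task made strictly easier here by the fact that $X_G$ and $X_F$ are bounded rather than of quadratic growth. The main technical point is thus the $\eta$-bound, where one must verify that the two perturbation terms above do not spoil the virtual-contact-type inequality; by the uniform boundedness of $F$ and $X_F$ this amounts to routine tracking of bounded correction terms.
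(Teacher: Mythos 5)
Your proposal is correct and takes essentially the same route as the paper: the paper's own proof is a one-sentence remark that the argument of \cite[Theorem 2.9]{AlbersFrauenfelder2010c} goes through once one incorporates the virtual-restricted-contact-type modifications already developed in the proof of Theorem \ref{thm:bounding the lagrange multiplier}, and you have simply unpacked exactly those ingredients (lifting $\lambda$ to $\widetilde{\lambda}$ on $T^{*}\widetilde{M}$ for the $\eta$-bound, tracking the two $F$-dependent correction terms, and using compact support of $G_{0},F$ together with geometric boundedness for the $x$-bound) in somewhat more detail.
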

Using the previous two theorems (see \cite[Section 2]{AlbersFrauenfelder2010c}
for the full details), if $F\in\mathcal{F}_{\textrm{reg}}(G)$ one
can define the Rabinowitz Floer homology $RFH_{*}(A_{G-k}^{F})$ of
the perturbed Rabinowitz action functional $A_{G-k}^{F}$, and show
moreover that \[
RFH_{*}(A_{G-k}^{F})\cong RFH_{*}(A_{G-k}^{F=0})\overset{\textrm{def}}{=}RFH_{*}(\Sigma_{k},T^{*}M).
\]

In particular, given $F\in\mathcal{F}_{\textrm{reg}}(G)$ we have
the following corollary of Theorem \ref{thm:theorem A precise}.
\begin{cor}
For degrees $*\ne0,1$, \[
RFH_{*}(A_{G-k}^{F})\cong\begin{cases}
H_{*}(\Lambda M;\mathbb{Z}_{2})\\
H^{1-*}(\Lambda M;\mathbb{Z}_{2}).
\end{cases}
\]

\end{cor}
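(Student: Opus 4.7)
My plan is to chain together three ingredients that have already been assembled earlier in the paper. First, the identification $RFH_*(A_{G-k}^F) \cong RFH_*(A_{G-k}^{F=0})$ (invariance under the admissible perturbation $F$) combined with the definition $RFH_*(A_{G-k}^{F=0}) = RFH_*(\Sigma_k, T^*M)$ and the isomorphism $RFH_*(\Sigma_k, T^*M) \cong RFH_*(A_{H-k})$ just proved via Proposition \ref{quadr is const} reduces the corollary to computing $RFH_*(A_{H-k})$.

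Next, I would invoke Theorem \ref{thm:theorem A precise} to produce, for each free homotopy class $\alpha \in [S^1,M]$, the long exact sequence
\[
\cdots \to H_i(\Lambda_\alpha M;\mathbb{Z}_2) \xrightarrow{\Theta_*} RFH_i(A_{H-k};\alpha) \xrightarrow{(\Phi_{\mathrm{AS}})_*} H^{1-i}(\Lambda_{-\alpha}M;\mathbb{Z}_2) \xrightarrow{\Delta} H_{i-1}(\Lambda_\alpha M;\mathbb{Z}_2) \to \cdots.
\]
Summing over $\alpha$ and using the direct sum decomposition $\Lambda M = \bigsqcup_\alpha \Lambda_\alpha M$, this becomes a long exact sequence with $H_*(\Lambda M;\mathbb{Z}_2)$, $RFH_*$, and $H^{1-*}(\Lambda M;\mathbb{Z}_2)$. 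By the theorem, $\Delta$ vanishes except at $\alpha = 0$ and $i = 0$, where it is multiplication by the Euler class $e(T^*M)$. In particular $\Delta \equiv 0$ on both sides of degree $i$ for every $i$ outside the range where it can interact with the $i = 0$ term, namely for $i \notin \{0,1\}$.

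Consequently, for every $* \notin \{0,1\}$ the long exact sequence degenerates into the short exact sequence
\[
0 \to H_*(\Lambda M;\mathbb{Z}_2) \to RFH_*(A_{H-k}) \to H^{1-*}(\Lambda M;\mathbb{Z}_2) \to 0,
\]
and it then suffices to observe that exactly one of the two flanking groups is zero: if $* > 1$ then $H^{1-*}(\Lambda M;\mathbb{Z}_2) = 0$ because cohomology vanishes in negative degree, giving $RFH_* \cong H_*(\Lambda M;\mathbb{Z}_2)$; if $* < 0$ then $H_*(\Lambda M;\mathbb{Z}_2) = 0$ for the same reason, giving $RFH_* \cong H^{1-*}(\Lambda M;\mathbb{Z}_2)$. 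These two cases are exactly the alternatives recorded in the statement.

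There is essentially no hard step here — everything is packaged into Theorem \ref{thm:theorem A precise} and the preceding identification of the various Rabinowitz Floer homologies. The only point worth double-checking is the bookkeeping on the degrees at which $\Delta$ could contribute: the Euler class in $H^n(M;\mathbb{Z}_2)$ acts on loop-space (co)homology in a way that only affects degrees near $0$, so the cutoff $* \notin \{0,1\}$ in the statement corresponds precisely to the degrees where $\Delta$ cannot be in either the domain or the codomain of the connecting map. The degenerate degrees $* = 0,1$ are exactly those where one must actually analyse the Euler class contribution, which is why they are excluded from the corollary.
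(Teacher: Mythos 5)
Your argument is correct and follows what the paper intended: reduce $RFH_*(A_{G-k}^F)$ to $RFH_*(A_{H-k})$ via the chain of isomorphisms established in Section \ref{sec:Non-displaceability-above-the}, apply the long exact sequence of Theorem \ref{thm:theorem A precise}, use the vanishing of $\Delta$ outside the exceptional degrees to get a short exact sequence, and finish by observing that one of the two flanking groups vanishes in negative (co)homological degree. The only thing worth flagging is a small indexing glitch that you inherit from the paper itself: Theorem \ref{thm:theorem A precise} states that $\Delta$ is nonzero only when $i=0$, but in the long exact sequence as written, $\Delta_0 : H^1(\Lambda_0 M) \to H_{-1}(\Lambda_0 M)$ has zero target, so it can never be nonzero; the nonzero connecting map must be $\Delta_1 : H^0(\Lambda_0 M) \to H_0(\Lambda_0 M)$ (cap product with the Euler class), and it is this map whose presence kills the short exact sequence precisely at $* \in \{0,1\}$, which is the range excluded in the corollary. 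You correctly land on the excluded set $\{0,1\}$ even while quoting the paper's {\textquotedblleft}$i=0${\textquotedblright}, so your final computation (for $*>1$ the cohomology term vanishes, giving $RFH_* \cong H_*(\Lambda M;\mathbb{Z}_2)$; for $*<0$ the homology term vanishes, giving $RFH_* \cong H^{1-*}(\Lambda M;\mathbb{Z}_2)$) is exactly right.
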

Using the corollary it is easy to complete the proof of Theorem \ref{thm:Leafwise}
from the introduction.
\begin{proof}
\emph{(of Theorem \ref{thm:Leafwise})}

First we show that any $\psi\in\mbox{Ham}_{c}(T^{*}M,\omega)$ has
a leaf-wise intersection point. Indeed, if not then we can find $F\in\mathcal{F}$
and $G\in\mathcal{H}(H)$ such that $\psi=\phi_{1}^{F}$ and $\mbox{Crit}(A_{G-k}^{F})=\emptyset$
(see for instance \cite[p279-280]{CieliebakFrauenfelder2009}). In
this case $A_{G-k}^{F}$ is trivially Morse, and hence $F\in\mathcal{F}_{\textrm{reg}}(G)$.
But if $\mbox{Crit}(A_{G-k}^{F})=\emptyset$ then $RFH_{*}(A_{G-k}^{F})=0$,
a contradiction. 

Suppose now that $\dim\, H_{*}(\Lambda M;\mathbb{Z}_{2})=\infty$
and $(g,\sigma,U,k)\in\mathcal{O}_{\textrm{reg}}$. Then for a generic
$\psi\in\mbox{Ham}_{c}(T^{*}M,\omega)$, we can write $\psi=\phi_{1}^{F}$
for some $F\in\widetilde{\mathcal{F}}_{\textrm{reg}}(G)$. In this
case the previous corollary combined with Lemma \ref{lem:albersfrau}
implies the existence of infinitely many leaf-wise intersection points
for $\psi$ in $\Sigma_{k}$.
\end{proof}
\bibliographystyle{amsplain}
\bibliography{willbibtex}

\end{document}